\newtheorem{lem}{Lemma}[section]
\newtheorem{thm}{Theorem}[section]
\newtheorem{prop}[thm]{Proposition}
\newtheorem{cor}{Corollary}
\numberwithin{equation}{section}
\theoremstyle{definition}
\newtheorem{defn}{Definition}[section]
\theoremstyle{theorem}
\newtheorem{rmk}{Remark}[section]
\title{PI controllers for the general Saint-Venant equations}
\author[*]{Amaury Hayat}
\affil[*]{Centre d’Enseignement et de Recherche en Mathématiques et Calcul Scientifique, \'{E}cole des Ponts ParisTech, 6 - 8, Avenue Blaise Pascal, Cité Descartes—Champs sur Marne, 77455 Marne la Vall\'{e}e, France. E-mail: \texttt{amaury.hayat@enpc.fr.}}
\date{\empty}
\begin{document}

\maketitle

\begin{abstract}

We study the exponential stability in the $H^{2}$ norm of the nonlinear Saint-Venant (or shallow water) 
equations with arbitrary friction and slope 
using a single Proportional-Integral (PI) control at one end of the channel. 
\textcolor{black}{Using a good but simple Lyapunov function} we find a simple and explicit condition on the gain of the PI control 
to ensure the exponential stability of any steady-states. 
This condition is independent of the slope, the friction coefficient, the length of the river, the inflow disturbance and, more surprisingly, \textcolor{black}{can be made independent of} the steady-state considered.
When the inflow disturbance is time-dependent and no steady-state exist, we still have the Input-to-State stability of the system,
and we show that changing slightly the PI control enables to recover the exponential stability of slowly varying trajectories.
\end{abstract}

\tableofcontents
\newpage
\section*{Introduction}
Discovered in 1871, the Saint-Venant equations \cite{SaintVenant} (or 1-D shallow water equations) are among the most famous equations in fluid dynamics and have been investigated in hundreds of studies. 
Although being quite simple, their richness has made them become a major tool in practice for many industrial goals, the most famous being probably the regulation of navigable rivers. 
They are the ground model for such purpose in France and Belgium.
Regulation of rivers 
is a major issue, for navigation, freight transport, 
renewable energy production,
but also for safety reasons, especially as
several nuclear plants all around the world are implanted close to rivers.
For these reasons, the stability of the steady-states of the Saint-Venant equations has been, and is still, a major issue.\\

Many results were obtained in the last decades. 
In 1999, the robust stability of the homogeneous linearized Saint-Venant equations was shown using a Lyapunov approach and proportional feedback controllers \cite{coron1999}.
Later, the stability of the homogeneous nonlinear Saint-Venant equations was achieved, still using proportional feedback controllers.
In 2008, through a semi-group approach \cite{dos2008}, the stability of the inhomogeneous nonlinear Saint-Venant equation was shown for sufficiently small friction and slope (or equivalently sufficiently small canal), and these results were successfully applied to real data sets from the Sambre river in Belgium.
More recently, in \cite{BC2017} the authors have given sufficient conditions to stabilize the nonlinear Saint-Venant equations with arbitrary friction for the $H^{2}$ norm but no slope using again proportional feedback controllers, 
and in \cite{HS} with both arbitrary friction and slope.
This last result is proved by exhibiting \textcolor{black}{a Lyapunov function that has a simple form close to a local entropy} for the nonlinear inhomogeneous Saint-Venant equations. 

It is worth mentioning that other stability results have also been obtained in less classical cases or with less classical feedbacks. 
For instance, in \cite{stvenantjump} was shown the rapid stabilization of the homogeneous nonlinear Saint-Venant equations when a shock (e.g. a hydraulic jump) occurs in the target steady-state.
Such \textcolor{black}{a} shock induces new difficulties and the presence of shocks can limit in general the controllability and the stability in weaker norms of hyperbolic systems 
with boundary controls
\cite{ABG,BG}.
Also, several results (e.g. 
\cite{CVBK})
were obtained using a backstepping approach, a very powerful method based on a Volterra transformation, developed mainly for PDE in \cite{krsticbook},
\textcolor{black}{and generalized recently with a Fredholm transformation for hyperbolic systems \cite{CoronFredholm,Zhang,Zhang2}.}
One may look at \cite{HS} for a more detailed survey about this method and its use for the Saint-Venant equations.
However, backstepping gives rise to non-local and non-static feedback laws that are likely to be harder to implement, and, to our knowledge, have not been implemented yet.\\

Most of the previous results were performed with static proportional feedback controllers. When it comes to industrial applications, however, the proportional integral (PI) control is by far the most popular regulator. 
It is used for instance for the regulation of the Sambre and Meuse river in Belgium \cite[Chapter 8]{BastinCoron1D}.
The reason behind such preference is the robustness of the PI control with off-set errors \cite[Chap. 11.3]{Astrom2}. \textcolor{black}{An example can be found in \cite{DosSantosBastinCoron} 
where the authors show the interest of adding an integral term to a proportional control on a linear and homogeneous system, and exhibit coherent experimental result.}

For these reasons, the PI controller has fed a wide literature, at least when used on finite dimensional systems.
However, despite their indisputable practical interest, PI controllers for nonlinear infinite dimensional systems have shown hard to handle mathematically 
and even studying simple systems give sometimes rise to lengthy proofs with relatively sophisticated tools \cite{PItransport}.
While the behaviour and the stability of linearized equations
with PI controller
has been well understood in the past, partly thanks to spectral tools like the spectral mapping theorem (e.g. \cite{Neves,Lichtner} for hyperbolic systems), no such tools exist for nonlinear systems and
the stability of the nonlinear Saint-Venant equations has remained a challenge until today.
\textcolor{black}{Among the existing linear result using a spectral approach, one can refer to \cite{XuSallet,XuSallet2014} where the authors find a sufficient condition for the stabilization of the linearized inhomogeneous Saint-Venant equations.
Necessary and sufficient conditions for the linearized homogeneous Saint-Venant equations
are given in \cite[Section 2.2.4.1, 3.4.4]{BastinCoron1D}.
In \cite{CoronTamasoiu2015} the authors find a necessary and sufficient condition for a linear scalar equation and show the difficulty of finding good conditions for the nonlinear equation, while 
in \cite{BastinCoronTamasoiu2015} the authors deal with $2\times2$ systems.}
Among the existing nonlinear results 
\textcolor{black}{one can refer to \textcolor{black}{\cite{TerrandAndrieuDos} in the case where the operator without PI control generates an exponentially stable semi-group}, 
\cite{Trinh2015} where the authors find a sufficient condition for the nonlinear homogeneous Saint-Venant equations,
\cite[2.2.4.2]{BastinCoron1D} where the authors find a necessary and sufficient condition also for the 
nonlinear homogeneous Saint-Venant equations,
while 
\cite[Section 5.4.4,5.5]{BastinCoron1D} and \cite{BastinCoron2013} give a sufficient condition for the inhomogeneous Saint-Venant equations for a single channel or a network, 
but in the particular case of constant steady-states only, which simplifies their analysis \cite{C1}. Strictly speaking, this last result was derived for the linearized system 
but with a Lyapunov approach, which can easily be generalized to the nonlinear system.}
More recently, and this is the most advanced result yet, \cite{BastinCoronPI} gave a sufficient condition of stability for the inhomogeneous Saint-Venant equations with an arbitrary friction and river length but only
in the absence of slope, using a Lyapunov approach.\\

In this paper, we consider the stabilization of the general nonlinear Saint-Venant equations with a single boundary PI control.
We give a simple and explicit condition on the parameters of the PI controller such that any steady-state
is exponentially stable for the $H^{2}$ norm. 
While stability results in inhomogeneous and nonlinear systems often imply a limit length for the domain, depending on the source term, above with we are unable to guarantee any stability (\cite{C1,C1_22,BastinCoron22,dos2008} or \cite[Chap. 6]{BastinCoron1D}), 
this result holds whatever the friction, the slope, and the length of the channel. 
Besides, our condition is independent of the slope, the friction coefficient, the river length, and, more surprisingly, \textcolor{black}{can be made independent of }the steady-state considered. 
Finally, when there is no slope this condition is less restrictive that the condition obtained in \cite{BastinCoronPI} 
and when there is no friction or slope this condition coincides with the \textcolor{black}{necessary and sufficient} spectral condition of stability for the linearized system given in \cite{BastinCoronTamasoiu2015} and \cite[Theorem 2.7]{BastinCoron1D}.

The case where the inflow disturbances are time dependent and no steady-states exists was seldom considered in the literature.
However, it is in fact unlikely that the industrial target state is a real steady-state as the inflow disturbance often depends on time in practice,
even though only slowly. 
Therefore, 
in the more general framework of slowly time-varying target states, 
we show the Input-to-State Stability (ISS) of the system with respect to the variation of the inflow disturbance. 
Finally, we show that if 
we allow the controller to depend on the target state, 
by changing slightly the PI controller, 
we can ensure
the exponential stability of slowly-varying target trajectories that are the natural target trajectories to consider when there is no steady-state of the system.

This paper is organized as follows: in Section \ref{s1} we give a description of the nonlinear Saint-Venant equations, we introduce the time-varying target trajectories together with some definitions and existence results, then we state our main results. 
In Section \ref{s2} we prove our main result, Theorem \ref{th1}, that deals with the exponential stability of time-varying state. 
In the Appendix, we show that Corollary \ref{cor1} dealing with the exponential stability of steady-states, 
and Theorem \ref{th2} showing the ISS of the system with respect to the variation of the inflow disturbance,
are both deduced from the proof of Theorem \ref{th1}. 

\section{Model description}
\label{s1}
We 
consider
the following nonlinear Saint-Venant equations for a rectangular channel with arbitrary slope and friction.
\begin{equation}
 \begin{split}
&\partial_{t}H+\partial_{x}(HV)=0,\\
&\partial_{t}V+V\partial_{x}V+g\partial_{x}H+\left(\frac{kV^{2}}{H}-C(x)\right)=0.
\end{split}
\label{Stvenant}
\end{equation}
Here, $k$ is an arbitrary nonnegative friction coefficient and $C$ denotes the slope, which is assumed to be a $C^{2}$ function, with $C(x):=-gdB/dx$ where $B$ is the bathymetry and $g$ the acceleration of gravity.
We are interested in systems where the water flow uphill is a given function, unknown and imposed by external conditions, for instance a flow coming from another country, while the water flow downhill is controlled through a hydraulic installation. Therefore, we have the following boundary conditions,
\begin{equation}
\begin{split}
H(t,0)V(t,0)&=Q_{0}(t),\\
H(t,L)V(t,L)&=U(t),
\end{split}
\label{boundary02}
\end{equation} 
where $U(t)$
is a control feedback and $Q_{0}(t)$
is the incoming flow, which is a given (and unknown) function. Here $L$ denotes the length of the water channel.
In practical situations, the formal control $U(t)$ can be expressed by a simple linear model \cite{BastinCoronPI}
\begin{equation}
U(t)=v_{G}(H(t,L)-U_{1}(t))\textcolor{black}{,}
\label{vG}
\end{equation} 
where $U_{1}(t)$ is the elevation of the gate of the dam, which is the real control input that can be chosen, while $v_{G}$ is a constant depending on the parameters of the gate (potentially unknown as well).\\

\subsection{Control goal and target trajectory}
Usually, the industrial goal of such system is to stabilize the level of the water at the end point $H(t,L)$, called control point, to a target value $H_{c}>0$. 
On the other hand, the usual mathematical goal in such a problem is to stabilize a target steady-state $(H^{*},V^{*})$, potentially nonuniform \cite{BastinCoron1D}[Preface]. However, in the present problem \eqref{Stvenant}--\eqref{boundary02}, it is clear that, when $Q_{0}$ 
is not constant, it is impossible to aim at stabilizing any steady-state and one needs to aim at stabilizing other target trajectories.
Therefore, we define the following target trajectory $(H_{1},V_{1})$ that we aim to stabilize as the solution of
\begin{equation}
\begin{split}
&\partial_{t}H_{1}+\partial_{x}(H_{1}V_{1})=0,\\
&\partial_{t}V_{1}+V_{1}\partial_{x}V_{1}+g\partial_{x}H_{1}+\left(\frac{kV_{1}^{2}}{H_{1}}-C(x)\right)=0,\\
&H_{1}(t,0)V_{1}(t,0)=Q_{0}(t),\\
&H_{1}(t,L)=H_{c},\\
\end{split}
\label{target}
\end{equation}
with the initial condition
\begin{equation}
H_{1}(0,\cdot)=H^{*}(\cdot)\text{ and }V_{1}(0,\cdot)=V^{*}(\cdot),
\label{initialtarget}
\end{equation} 
where $(H^{*},V^{*})$ is the (unique) steady-state solution of the system \textcolor{black}{when $Q_{0}$ is} constant, equal to $Q_{0}(0)$. Namely, $(H^{*},V^{*})$ is the solution of
\begin{equation}
\begin{split}
 &\partial_{x}(HV)=0,\\
 &V\partial_{x}V+g\partial_{x}H+\left(\frac{kV^{2}}{H}-C(x)\right)=0,\\
 &H(L)=H_{c},
 \end{split}
 \label{targetsteady0}
 \end{equation}
with condition at $x=0$
\begin{equation}
 \begin{split}
 &H^{*}(0)V^{*}(0)=Q_{0}(0).
 \end{split}
\label{targetsteady}
\end{equation}
We are now going to show that the trajectory $(H_{1},V_{1})$ exists for any time and satisfies some bounds.

\paragraph{Existence and bounds of the target trajectory $(H_{1},V_{1})$}

Instead of studying directly our target trajectory $(H_{1},V_{1})$ we first construct an intermediary family of functions 
$(H_{0},V_{0})$ where at each time $t$ $(H_{0}(t,\cdot),V_{0}(t,\cdot))$ is defined as the space dependent steady-state that would be associated with the constant flow $Q_{0}(t)$. This is detailed in the following paragraph.\\

We defined previously $(H^{*}, V^{*})$ as the steady-state associated to a constant flux $Q_{0}\equiv Q_{0}(0)$. This means that. $(H^{*},V^{*})$ is the solution of the ODE problem \eqref{targetsteady0} with 
initial condition $H^{*}(0)V^{*}(0)=Q_{0}(0)$. But in fact
at each time $t^{*}\in\mathbb{R}^{*}_{+}$, we can also define a steady-state $(H^{*}_{t^{*}},V^{*}_{t^{*}})$ associated to a constant flux $Q_{0}\equiv Q_{0}(t^{*})$. This means that
$(H^{*}_{t^{*}},V^{*}_{t^{*}})$ is the solution of the ODE problem \eqref{targetsteady0} 
with initial condition satisfying
\begin{equation}
\begin{split}
 &H_{t^{*}}^{*}(0)V_{t^{*}}^{*}(0)=Q_{0}(t^{*}).
 \end{split}
 \label{targetsteady2}
\end{equation} 
\textcolor{black}{Although the system \eqref{targetsteady0}--\eqref{targetsteady2} could seem peculiar as it has boundary conditions imposed both in $0$ and in $L$, 
we know looking at the first equation of \eqref{targetsteady0} that this system \eqref{targetsteady0}, \eqref{targetsteady2} is in fact equivalent to a single ODE on $H_{t^{*}}^{*}$
with boundary condition $H_{t^{*}}^{*}(L)=H_{c}$ and $V_{t^{*}}^{*}$ defined by $V_{t^{*}}^{*}=Q_{0}(t)/H_{t^{*}}^{*}$.
Thus} for each $t^{*}\in[0,+\infty)$ such function exists on $[0,L]$, is unique and $C^{3}$ provided that the state stays in the fluvial regime (or subcritical regime), i.e. $g H_{t^{*}}^{*}>V^{*2}_{t^{*}}$ on $[0,L]$,
\textcolor{black}{which, for a given $H_{c}$, is equivalent to a bound on $Q_{0}(t^{*})$}
(see \cite{HS} for more details).
As we are interested in \textcolor{black}{stabilizing} physical trajectories in the fluvial regime, we assume that this assumption is satisfied \textcolor{black}{in the following} and that
there exist $\alpha>0$ and $H_{\max}>0$ independent of $t^{*}\in[0,\infty)$ such that
\begin{equation}
\begin{split}
H_{t^{*}}^{*}<\frac{1}{2}H_{\max}\text{  on  }[0,L],\\
gH_{t^{*}}^{*}-V_{t}^{*2}>2\alpha\text{  on  }[0,L].
\end{split}
\label{fluvial0}
\end{equation} 
\textcolor{black}{
For a given $H_{c}$, this is again equivalent to imposing a bound $Q_{\infty}$ on $\lVert Q_{0}\rVert_{L^{\infty}(0,\infty)}$, from \eqref{targetsteady0} and \eqref{targetsteady2},
which would be more logical.
However, for convenience, we will still use $H_{\max}$ and $\alpha$ in the following. This assumption is quite physical, especially as in practical situation the river is in fluvial regime and $Q_{0}(t)$ is often periodic or quasi-periodic.}
This gives a family of one-variable functions indexed by a parameter $t^{*}$, which can also be seen as the two-variable functions 
\begin{equation}
(H_{0},V_{0}) : (t,x)\rightarrow (H^{*}_{t}(x),V^{*}_{t}(x)).
\end{equation}
Besides, from \eqref{targetsteady}, as $(H_{t}^{*},V_{t}^{*})$ 
is the solution of a system of ODE with a parameter $t$, the two variable functions $(H_{0},V_{0})$ therefore belongs to $C^{3}([0,+\infty)\times(0,L))$ (see \cite{Hartman}[Chap. 5, Cor. 4.1]). 
And from its definition, one can note that $(H_{0}(0,\cdot),V_{0}(0,\cdot))=(H^{*},V^{*})$.\\

For clarity, we summarize here the different families of functions we introduced.
\begin{itemize}
\item 
$(H^{*},V^{*})$, a function of $x$, the steady-state of the system when $Q_{0}\equiv cste$\\
\item
$(H_{1},V_{1})$, a function of $t$ and $x$, the target trajectory to reach when $Q_{0}$ is not a 
constant. This trajectory is compatible with the objective $H(t,L) = H_{c}$, for any $t\in [0,T]$.\\
\item 
$(H^{*}_{t^{*}},V^{*}_{t^{*}})$, a function of $x$, the steady-state of the system when $Q_{0}$ is a constant equal to $Q_{0}(t)$ ($t^{*}$ is fixed).\\
\item
$(H_{0},V_{0})$, a function of $t$ and $x$, the family such that $(H_{0}, V_{0})\;:\;(t,x) t\rightarrow (H_{t}^{*}(x),V^{*}_{t}(x))$
\end{itemize}

Now that we have introduced this intermediary family of functions, we can show the existence of the target trajectory $(H_{1},V_{1})$ and 
we have the following Input-to-State Stability (ISS) result \textcolor{black}{(see \cite{Sontag} for a definition of ISS for finite dimensional systems, 
\cite[Chap 1, Chap 3]{IassonKrstic} for a generalization to first-order hyperbolic PDE and \cite{PrieurISS} for the use of Lyapunov function to achieve ISS on time-varying hyperbolic systems)},
\begin{prop}
Assume that $\partial_{t}Q_{0}\in C^{2}([0,\infty))$. There exist positive constants $c_{1}$, $c_{2}$,
$\mu>0$, $\nu>0$ and $\delta>0$ such that if
$\lVert\partial_{t}Q_{0}\rVert_{C^{2}([0,+\infty))}\leq\delta$, then for any $(H_{1}^{0},V_{1}^{0})\in H^{2}((0,L),\textcolor{black}{\mathbb{R}^{2}})$ such that
\begin{equation*}
\lVert H_{1}^{0}-H^{*}\rVert_{H^{2}(0,L)}+\lVert V_{1}^{0}-V^{*}\rVert_{H^{2}(0,L)}\leq \nu,
\end{equation*} 
the system \eqref{target} with initial condition $(H_{1}^{0},V_{1}^{0})$ has a unique solution $(H_{1},V_{1})\in C^{0}([0,+\infty),H^{2}(0,L))$ which satisfies the following ISS inequality
\textcolor{black}{\begin{equation}
\begin{split}
\lVert H_{1}(t,\cdot)-H_{0}(t,\cdot)\rVert_{H^{2}(0,L)}+&\lVert V_{1}(t,\cdot)-V_{0}(t,\cdot)\rVert_{H^{2}(0,L)}\\
\leq &c_{1}(\lVert H_{1}^{0}-H^{*}\rVert_{H^{2}(0,L)}+\lVert V_{1}^{0}-V^{*}\rVert_{H^{2}(0,L)})e^{-\frac{\mu t}{2}}\\
&+c_{2}\left(\int_{0}^{t}\left(|\partial_{t}Q_{0}(s)|+|\partial_{tt}^{2}Q_{0}(s)|+|\partial_{ttt}^{3}Q_{0}(s)|\right)e^{\frac{\mu s}{2}}ds\right)e^{-\frac{\mu t}{2}}.
\end{split}
\label{ISS}
\end{equation}}
\label{propISS}
\end{prop}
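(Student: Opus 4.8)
The plan is to study the target trajectory $(H_1,V_1)$ through its deviation from the frozen steady-state family $(H_0,V_0)$, reducing the proposition to a local-existence statement together with an a priori $H^2$-bound coming from a Lyapunov functional. Set $h:=H_1-H_0$ and $v:=V_1-V_0$. Since $(H_0(t,\cdot),V_0(t,\cdot))=(H^{*}_{t},V^{*}_{t})$ is, for each frozen $t$, an exact solution of the stationary system \eqref{targetsteady0}, substituting $(H_0,V_0)$ into \eqref{target} leaves only the residual $(\partial_t H_0,\partial_t V_0)$; by the chain rule and the smooth dependence of $(H^{*}_{t},V^{*}_{t})$ on the parameter $Q_0(t)$ (the $C^{3}$ regularity already recorded after \eqref{targetsteady2}, via \cite{Hartman}), these residuals are controlled pointwise by $|\partial_t Q_0|$, their first derivatives by $|\partial_t Q_0|+|\partial_{tt}^{2}Q_0|$, and their second derivatives by $|\partial_t Q_0|+|\partial_{tt}^{2}Q_0|+|\partial_{ttt}^{3}Q_0|$, which is exactly the hierarchy appearing in \eqref{ISS}. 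Crucially, both $(H_1,V_1)$ and $(H_0,V_0)$ satisfy the same boundary relations — the inflow flux $Q_0(t)$ at $x=0$ and the level $H_c$ at $x=L$ — so $(h,v)$ solves a quasilinear hyperbolic system with the homogeneous data $h(t,L)=0$ at $x=L$ and the homogeneous flux relation $(H_1V_1-H_0V_0)(t,0)=0$ at $x=0$, forced only by the residual above.

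First I would establish local-in-time well-posedness in $C^{0}([0,T],H^{2}(0,L))$ for \eqref{target} with the prescribed initial data, using the classical theory for quasilinear hyperbolic systems in the strictly hyperbolic, subcritical (fluvial) regime guaranteed by \eqref{fluvial0}; this requires the first- and second-order compatibility conditions between $(H_1^{0},V_1^{0})$ and the boundary conditions at $t=0$, which hold up to an error controlled by $\nu$ and $\delta$ because $(H^{*},V^{*})$ satisfies them exactly. The smallness parameter $\nu$ is chosen so that the solution starts, and the a priori argument will keep it, in a tube around the fluvial family on which the eigenvalues $V\pm\sqrt{gH}$ retain constant signs.

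The core is an $H^{2}$ Lyapunov estimate for $(h,v)$. Passing to the Riemann-type coordinates that diagonalize the principal part, I would build a weighted functional
\[
\mathcal{V}(t)=\sum_{j=0}^{2}\int_{0}^{L}\Phi_{j}\big(\partial_{x}^{j}h,\partial_{x}^{j}v\big)\,e^{-p_{j}(x)}\,dx,
\]
of the same flavour as the entropy-like Lyapunov function of \cite{HS}, where each $\Phi_{j}$ is a sign-definite quadratic form and each weight $p_{j}$ is tuned to the signs of the characteristic speeds $V\pm\sqrt{gH}$, so that the interior dissipation produced by the friction term $kV^{2}/H$ together with the boundary contributions — favourable because $h(t,L)=0$ kills one boundary term and the flux relation at $x=0$ controls the other in the fluvial regime — dominate. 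Differentiating $\mathcal{V}$ along the system, integrating by parts, absorbing the quasilinear remainders into the dissipation for $\|(h,v)\|_{H^{2}}$ small, and estimating the residual forcing against $(h,v)$ by Cauchy–Schwarz (which produces the first power of the source, hence the $L^{1}$-in-time convolution of \eqref{ISS} rather than an $L^{2}$ one), the target inequality is
\[
\frac{d}{dt}\sqrt{\mathcal{V}}\le-\frac{\mu}{2}\sqrt{\mathcal{V}}+C\big(|\partial_t Q_0|+|\partial_{tt}^{2}Q_0|+|\partial_{ttt}^{3}Q_0|\big),
\]
with $\mu>0$ uniform in $t$ thanks to the uniform fluvial bounds \eqref{fluvial0}. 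Since $\mathcal{V}$ is equivalent to $\|h\|_{H^{2}}^{2}+\|v\|_{H^{2}}^{2}$, Grönwall's lemma turns this into the ISS inequality \eqref{ISS}, and the resulting a priori bound — small because $\delta$ and $\nu$ are small — feeds the continuation argument upgrading local to global existence on $[0,+\infty)$.

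The main obstacle is the $H^{2}$ Lyapunov computation: after differentiating the quasilinear system once and twice, one must control the commutators and lower-order terms and, above all, show that every boundary term generated at $x=0$ and $x=L$ by the successive integrations by parts has the right sign or is dominated, using only the homogeneous flux relation at $x=0$ and $h(t,L)=0$ — differentiated in time — together with the system itself to trade boundary $x$-derivatives for time derivatives. A secondary technical point is checking that $\mu$ and the equivalence constants between $\mathcal{V}$ and the $H^{2}$ norm can be chosen uniformly in $t$, which is precisely what the $t$-independent bounds $\alpha$ and $H_{\max}$ in \eqref{fluvial0} are there to provide.
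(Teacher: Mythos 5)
Your overall route is the paper's own: the same decomposition $(h,v)=(H_{1}-H_{0},V_{1}-V_{0})$ against the frozen family of steady-states, the same observation that the residual forcing $(\partial_{t}H_{0},\partial_{t}V_{0})$ and its time derivatives are controlled by $|\partial_{t}Q_{0}|+|\partial_{tt}^{2}Q_{0}|+|\partial_{ttt}^{3}Q_{0}|$, the same homogeneous boundary relations, the same differential inequality $\frac{d}{dt}\sqrt{\mathcal{V}}\le-\tfrac{\mu}{2}\sqrt{\mathcal{V}}+C(\cdots)$ closed by Gr\"onwall, and the same continuation argument iterated on time intervals to pass from local to global existence. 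One technical divergence: the paper builds the $H^{2}$-level functional from \emph{time} derivatives $\partial_{t}\mathbf{w},\partial_{tt}^{2}\mathbf{w}$ (recovering $x$-derivatives through the equation, as in \eqref{V2V3}--\eqref{IJ}), not from $\partial_{x}^{j}(h,v)$. This is what makes the boundary analysis uniform --- the homogeneous relations $h(t,L)=0$ and $(H_{1}V_{1}-H_{0}V_{0})(t,0)=0$ differentiate in time into relations of the same form --- and it is also exactly where the three time derivatives of $Q_{0}$ in \eqref{ISS} come from; with a purely space-derivative functional that hierarchy does not arise as you describe, and every boundary term must anyway be converted through the equation into time derivatives, which leads you back to the paper's formulation.

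The genuine gap is in the core Lyapunov step: you attribute the interior decay to ``dissipation produced by the friction term $kV^{2}/H$,'' with weights $e^{-p_{j}(x)}$ merely ``tuned to the signs of the characteristic speeds.'' That mechanism is not what closes the estimate, and as stated it would fail. For the inhomogeneous Saint-Venant equations the source need not be dissipative at all (the slope can inject energy), and plain exponential weights adapted to the characteristic speeds are known to work only for small source terms or short channels. After integration by parts the interior contribution is a quadratic form of the type $I_{2}$ in \eqref{I1I2}, whose off-diagonal coefficient $\gamma_{2}f_{1}+\delta_{1}f_{2}$ (see \eqref{gamma}) involves both the friction and the slope-induced gradients $H_{1x},V_{1x}$; making this form nonnegative on all of $[0,L]$, for arbitrary $k$, $C(x)$ and $L$, forces the weights to satisfy a Riccati-type differential inequality. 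The crux of the paper --- Lemma \ref{lem2}, inherited from \cite{HS} --- is that this Riccati equation admits a global solution on $[0,L]$ because $\chi=\lambda_{2}\phi/\lambda_{1}$, with $\phi$ given by \eqref{phi}, solves it exactly; the weights are then taken as in \eqref{deff1f2} from a perturbation $\chi_{\varepsilon}$ of that explicit solution. Your phrase ``of the same flavour as \cite{HS}'' points at the right object, but your sketch never identifies this global solvability as the step that makes the argument work in the stated generality; without it, your plan reduces to the classical small-source/small-length energy estimate and cannot prove Proposition \ref{propISS} for arbitrary friction, slope and length. A smaller inaccuracy of the same kind: $h(t,L)=0$ does not ``kill'' the boundary term at $x=L$; it yields the reflection relation $w_{1}(t,L)=w_{2}(t,L)$, and the resulting term has a favorable sign only because the chosen weights satisfy $\lambda_{1}f_{1}(L)>\lambda_{2}f_{2}(L)$ --- again a consequence of the same weight construction, not an automatic fact.
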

This result is shown in Appendix \ref{AppendixISS}, \textcolor{black}{and a definition of the $C^{2}$ norm is recalled in Remark \ref{rmkHpISS}}. Note that $Q_{0}$ is supposed to be bounded, which is quite physical, but there is no additional requirement on this bound besides the physical assumption given by $Q_{\infty}$ of remaining in the fluvial regime. This is important as in practical 
situations the value of the incoming flow can change a lot, even though slowly.\\

Here, we \textcolor{black}{choose} to stabilize the trajectory $(H_{1},V_{1})$ associated to $H_{1}^{0}=H^{*}$ and $H_{1}^{0}=V^{*}$. As we will see, this target trajectory can be seen as the natural trajectory to stabilize as it satisfies the industrial goal $H(t,L)=H_{c}$ 
and it coincides with the steady-state solution when $Q_{0}$ is a constant. 
\textcolor{black}{In this last case $Q_{0}$ and $H_{c}$ are imposed and $H^{*}$ and $V^{*}=Q_{0}/H^{*}$ are thus fully determined using \eqref{targetsteady0}.}
But one can note from \eqref{ISS} that, in fact, the behavior of $(H_{1},V_{1})$ at large time does not depend on the initial condition $(H_{1}^{0},V_{1}^{0})$ in \eqref{initialtarget}, provided that it is close in $H^{2}$ norm to $(H^{*},V^{*})$.\\
\begin{rmk}
The same ISS result can be shown replacing the $H^{2}$ norm in Proposition \ref{propISS} by the $H^{p}$ norm where $\textcolor{black}{p\in\mathbb{N}^{*}\setminus\{1\}}$, with the condition $\lVert\partial_{t}Q_{0}\rVert_{C^{p}([0,+\infty))}\leq\delta$ instead of $\lVert\partial_{t}Q_{0}\rVert_{C^{2}([0,+\infty))}\leq\delta$. This is shown in Appendix \ref{AppendixISS}. \textcolor{black}{We define here the $C^{p}$ norm for a function $U\in C^{p}(I)$, where $I$ is an interval, as
\begin{equation}
\rVert U \lVert_{C^{p}(I)}:=\max_{i\in[0,p]}(\lVert \partial_{t}^{i}U\rVert_{L^{\infty}(I)})
\end{equation}}

\label{rmkHpISS}
\end{rmk}

Thus, from Proposition \ref{propISS} and \eqref{fluvial0}, there exists a constant $\textcolor{black}{\delta}>0$ such that, if $\lVert\partial_{t}Q_{0}\rVert_{\textcolor{black}{C^{2}([0,\infty))}}<\textcolor{black}{\delta}$, then $(H_{1},V_{1})\in C^{0}([0,+\infty),H^{2}(0,L))$ and
\begin{gather}
H_{1}(t,x)<H_{\max},\text{   }\forall\text{  }(t,x)\in[0,+\infty)\times[0,L],
\label{Hinfty}\\
 gH_{1}(t,x)-V_{1}^{2}(t,x)>\alpha,\text{   }\forall\text{  }(t,x)\in[0,+\infty)\times[0,L].
 \label{fluvial}
\end{gather}

Besides, when $Q_{0}$ is a constant, it is easy to check that $(H_{0},V_{0})=(H^{*},V^{*})$ is also solution of \eqref{target}--\eqref{initialtarget}. Thus, from the uniqueness of the solution of \eqref{target}--\eqref{initialtarget}, 
$(H_{1},V_{1})=(H^{*},V^{*})$ and, therefore, we recover a steady-state.
This illustrates that $(H_{1},V_{1})$ can be seen as the natural target state
when $Q_{0}$ is not a constant anymore.
Moreover, from \eqref{target}, stabilizing $(H_{1},V_{1})$ also satisfies the industrial goal by stabilizing $H(t,L)$ on the value $H_{c}$.

\subsection{Control design and main result}
As mentioned in the introduction, a usual type of controller used in pratice to reach this aim is the proportional-integral (PI) controller. 
It has the advantage of eliminating the offset coming from constant load disturbances, which can usually appear in these systems
as the command on the gate's level are only known up to some constant incertainties.  
A generic PI controller is given by
\begin{equation}
U_{1}(t)=k_{p}(H_{c}-H(t,L))+k_{I}Z\textcolor{black}{,}
\label{defu1}
\end{equation}
where $k_{p}$ and $k_{I}$ are coefficients that can be designed and $Z$ accounts for the integral term, i.e.
\begin{equation}
\dot Z=H_{c}-H(t,L).
\label{Z}
\end{equation} 
With such controller, and using \eqref{vG}, the boundary conditions \eqref{boundary02} become \textcolor{black}{\eqref{Z} and}
\begin{equation}
\begin{split}
&H(t,0)V(t,0)=Q_{0}(t),\\
&H(t,L)V(t,L)=v_{G}(1+k_{p})H(t,L)-v_{G}k_{p}H_{c}-v_{G}k_{I}Z\textcolor{black}{,}
\label{boundary2}
\end{split}
\end{equation} 
 In Corollary \ref{cor1} we show that this boundary control can be used to stabilize \textcolor{black}{exponentially} a steady-state when $Q_{0}$ is a constant. 
In Theorem \ref{th2} we show that this control can also provide an Input-to-State Stability property with respect to $\partial_{t}Q_{0}$.
However, this control \eqref{defu1} cannot be used to stabilize \textcolor{black}{a} dynamic target trajectory $(H_{1},V_{1})$, as there is no function $Z_{1}\in C^{1}([0,+\infty))$ such that $(H_{1},V_{1},Z_{1})$ 
is a solution of \eqref{Stvenant}, \eqref{Z}, \eqref{boundary2} while $(H_{1},V_{1})$ is a solution of \eqref{target}. Therefore, when stabilizing a dynamic target trajectory, one has to add an additional term and use
\begin{equation}
U_{1}(t)=k_{p}(H_{c}-H(t,L))+k_{I}Z-f(t)\textcolor{black}{,} 
\label{defu12}
\end{equation} 
where $f(t):=H_{1}(t,L)V_{1}(t,L)/v_{G}$. The boundary conditions \eqref{boundary02} become then
\begin{equation}
\begin{split}
&H(t,0)V(t,0)=Q_{0}(t),\\
&H(t,L)V(t,L)=H_{1}V_{1}(t,L)+v_{G}(1+k_{p})(H(t,L)-H_{c})-v_{G}k_{I}Z,
\label{boundary1}
\end{split} 
\end{equation} 
where we have actually changed $Z$ and re-define $Z:=Z-k_{p}/k_{I}$, which still satisfies the equation \eqref{Z}. 

This new control \eqref{defu12} assumes that $V_{1}(t,L)$ is known at least up to a constant, as $H_{1}(t,L)=H_{c}$ and additional constants can be incorporated into $Z$. 
When no knowledge on the target state is available besides $H_{c}$, 
it is impossible to stabilize exponentially the system, and the best one can get is the Input-to-State Stability which is given by Theorem \ref{th2}. 
However in the following we will keep working with \eqref{defu12} and \eqref{boundary1} to show Theorem \ref{th1} and the exponential stability of the system, as the proof of Theorem \ref{th2} and Corollary \ref{cor1} which uses only the control \eqref{defu1} and \eqref{boundary2} are easily deduced from the proof of Theorem \ref{th1}.\\

We introduce the first-order compatibility conditions associated to the boundary conditions \eqref{boundary1} for an initial condition $(H^{0},V^{0},Z^{0})$.
\begin{equation}
\begin{split}
&H^{0}(0)V^{0}(0)=Q_{0}(0),\\
&H^{0}(L)V^{0}(L)=H_{1}V_{1}(0,L)+v_{G}(1+k_{p})(H^{0}(L)-H_{c})-k_{I}Z^{0},\\
&-\partial_{x}(H^{0}(0)V^{0}(0)+g\frac{gH^{0}(0)^{2}}{2})-(k(V^{0})^{2}(0)-CH^{0}(0))=Q'_{0}(0),\\
&-\partial_{x}(H^{0}(L)V^{0}(L)+g\frac{gH^{0}(L)^{2}}{2})-(k(V^{0})^{2}(L)-CH^{0}(L))=\partial_{t}(H_{1}V_{1})(0,L)\\
&\text{  }\text{  }\text{  }\text{  }\text{  }\text{  }\text{  }\text{  }\text{  }-v_{G}(1+k_{p})\partial_{x}(H^{0}(L)V^{0}(L))+k_{I}(H^{0}(L)-H_{c}).
\label{compat}
\end{split}
\end{equation} 
With such compatibility conditions the system \eqref{Stvenant}, \eqref{Z}, \eqref{boundary1} is well-posed and we have the following theorem due to Wang \cite{Wang}[Theorem 2.1]:
\begin{thm}[Well-posedness]
Let $T>0$,  and assume that $\lVert\partial_{t}Q_{0}\rVert_{C^{3}([0,+\infty))}\leq \delta(T)$, such that $(H_{1},V_{1})$ is well-defined and belongs to $C^{0}([0,T],H^{3}(0,L))$.  
There exists $\nu(T)>0$ such that for any $(H^{0},V^{0},Z^{0})\in (H^{2}((0,L))))^{2}\times \mathbb{R}$ satisfying 
\begin{equation}
\lVert H^{0}(\cdot)-H_{1}(0,\cdot)\rVert_{H^{2}(0,L)}+\lVert V^{0}(\cdot)-V_{1}(0,\cdot)\rVert_{H^{2}(0,L)}+|Z^{0}|\leq \nu(T),
\end{equation}
and satisfying the compatibility conditions \eqref{compat}, the system \eqref{Stvenant}, \eqref{Z}, \eqref{boundary1} has a unique solution $(H,V,Z)\in (C^{0}([0,T],H^{2}((0,L))))^{2}\times C^{1}([0,T])$. Moreover
there exists a positive constant $C(T)$ such that
\begin{equation}
\begin{split}
\lVert H(t,\cdot)-H_{1}(t,\cdot)\rVert_{H^{2}(0,L)}&+\lVert V(t,\cdot)-V_{1}(t,\cdot)\rVert_{H^{2}(0,L)}+|Z|\\
&\leq C(T)\left(\lVert H^{0}(\cdot)-H_{1}(0,\cdot)\rVert_{H^{2}(0,L)}+\lVert V^{0}(\cdot)-V_{1}(0,\cdot)\rVert_{H^{2}(0,L)}+|Z^{0}|\right).
\end{split}
\label{estimate}
\end{equation}
\label{th0}
\end{thm}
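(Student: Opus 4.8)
The plan is to recognize that, once written in perturbation variables and put in characteristic form, the system \eqref{Stvenant}, \eqref{Z}, \eqref{boundary1} is a quasilinear hyperbolic initial--boundary value problem coupled to a scalar driven ODE, and then to invoke the general well-posedness theorem of Wang \cite{Wang} after verifying that its structural hypotheses hold at the reference trajectory. First I would pass to the perturbation $(h,v):=(H-H_1,V-V_1)$ around the smooth target $(H_1,V_1)$. Using that $(H_1,V_1)$ solves \eqref{target} with $H_1(t,L)=H_c$, the interior equations for $(h,v)$ become a quasilinear hyperbolic system $\partial_t\binom{h}{v}+A(t,x,h,v)\partial_x\binom{h}{v}=F(t,x,h,v)$, with $A$ and $F$ of class $C^2$ in their arguments (this uses $(H_1,V_1)\in C^0([0,T],H^3)$, the $C^2$ regularity of $C$, and the smoothness of the flux) and with $(h,v)\equiv 0$ an exact solution in the undriven case. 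At $x=L$ the relation \eqref{boundary1} becomes, after substituting $H(t,L)-H_c=h(t,L)$, a relation between the boundary traces of $h$, $v$ and the integral state $Z$, while \eqref{Z} reads $\dot Z=-h(t,L)$; at $x=0$ the condition is $V_1(t,0)h(t,0)+H_1(t,0)v(t,0)+h(t,0)v(t,0)=0$.

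Next I would diagonalize. The eigenvalues of the Saint-Venant flux are $\lambda_\pm=V\pm\sqrt{gH}$, with Riemann invariants $V\pm 2\sqrt{gH}$. The fluvial bound \eqref{fluvial}, $gH_1-V_1^2>\alpha$, together with \eqref{Hinfty} guarantees $\lambda_-<0<\lambda_+$ uniformly on $[0,T]\times[0,L]$ for $(h,v)$ small, so the system is strictly hyperbolic with characteristic speeds of opposite sign; hence exactly one boundary condition is required at each endpoint, matching the single condition at $x=0$ and the single condition at $x=L$. The structural point to check is that each boundary condition can be solved for the incoming characteristic variable (the $\lambda_+$-invariant at $x=0$, the $\lambda_-$-invariant at $x=L$) in terms of the outgoing one, the data, and---at $x=L$---the state $Z$; the relevant Jacobians are nonzero at the reference state precisely because $gH-V^2>0$, i.e.\ again by subcriticality.

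I would then treat the scalar ODE for $Z$ as an augmentation of the boundary at $x=L$, so that the pair (hyperbolic IBVP, driven ODE) fits the dynamic-boundary framework of \cite{Wang}; local existence and uniqueness in $(C^0([0,T],H^2))^2\times C^1([0,T])$ follow once the first-order compatibility conditions hold. Here \eqref{compat} are exactly the order-$0$ and order-$1$ compatibility conditions at the corners $(0,0)$ and $(0,L)$, obtained by differentiating the boundary relations in $t$ and eliminating $\partial_t$ through the PDE; they are what prevents loss of regularity and keeps the solution in $H^2$ in space. Finally, the estimate \eqref{estimate} is read off from the a priori $H^2$ energy estimate underlying the local theorem: a Grönwall argument on $[0,T]$ bounds the perturbation norm, including $|Z|$, by its initial value with a constant $C(T)$ depending on $T$, on the bounds \eqref{Hinfty}--\eqref{fluvial}, and on $\lVert(H_1,V_1)\rVert_{C^0([0,T],H^3)}$; applying the same linearized estimate to the difference of two solutions yields uniqueness and continuous dependence.

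The hardest part will be the verification of boundary admissibility together with the clean incorporation of the dynamic state $Z$: one must check that the augmented boundary operator is non-characteristic, i.e.\ solvable for the incoming invariant, and that the compatibility conditions \eqref{compat} are consistent to first order, since a defect there would destroy the $H^2$ regularity at the space-time corners and break the estimate \eqref{estimate}.
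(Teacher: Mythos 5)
Your overall route is the same as the paper's: rewrite the problem around the smooth target, check strict hyperbolicity with speeds of opposite sign under the fluvial bounds \eqref{Hinfty}--\eqref{fluvial}, identify \eqref{compat} as the first-order compatibility conditions, and then invoke \cite{Wang}[Theorem 2.1] together with its a priori estimate to get existence, uniqueness and \eqref{estimate}. However, there is a genuine gap at exactly the point you flag as the hardest: the incorporation of $Z$. You assert that the pair (hyperbolic IBVP, driven ODE) ``fits the dynamic-boundary framework of \cite{Wang}'', but Wang's theorem is a well-posedness result for quasilinear hyperbolic systems with \emph{static} nonlinear boundary conditions; it contains no framework for boundary conditions driven by an ODE, so as written the citation does not cover the coupled system \eqref{Stvenant}, \eqref{Z}, \eqref{boundary1}. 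The paper closes this gap with a small but essential trick that your proposal is missing: $Z$ is adjoined as a \emph{third component of the hyperbolic system with null propagation speed} and constant initial condition $Z^{0}$, so that $Z(t)$ is its value everywhere on $[0,L]$, in particular at $x=L$; the relation \eqref{Z} and the feedback \eqref{boundary1} then become ordinary nonlinear boundary conditions for an augmented $3\times 3$ quasilinear system with eigenvalues $\lambda_{1}>0$, $-\lambda_{2}<0$ and $0$, which is precisely the setting of \cite{Wang} (vanishing characteristic speeds being admissible there, and the zero-speed component requiring no boundary condition of its own). Without this reduction --- or, alternatively, a self-contained fixed-point argument in which you freeze $Z$, solve the hyperbolic IBVP, update $Z$ by integrating \eqref{Z}, and prove contraction in $C^{0}([0,T],H^{2})\times C^{1}([0,T])$ --- your appeal to \cite{Wang} is not justified, and nothing else in your argument substitutes for it.

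A secondary inaccuracy: you claim the boundary relations can be solved for the incoming invariants ``precisely because $gH-V^{2}>0$''. That is true at $x=0$, where the relevant Jacobian is proportional to $\lambda_{1}(t,0)=V_{1}+\sqrt{gH_{1}}>0$. But at $x=L$ the computation of Appendix \ref{boundary} (see \eqref{k1k30} and the denominators in \eqref{k1k3}) shows the Jacobian is proportional to $v_{G}(1+k_{p})+\lambda_{2}(t,L)$, so solvability for the incoming invariant requires $v_{G}(1+k_{p})\neq-\lambda_{2}(t,L)$; this holds under the gain conditions \eqref{cond} but is an additional nondegeneracy assumption on $k_{p}$ and $v_{G}$, not a consequence of subcriticality alone.
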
 
To apply the result from \cite{Wang}, note that $Z$ can be seen as a third component of the hyperbolic system with a null propagation speed, a constant initial condition $Z^{0}$ and
$Z(t)$ being thus its value everywhere \textcolor{black}{on $[0,L]$} including at the boundaries.
\begin{rmk}
If, in addition, $(H^{0},V^{0})\in H^{3}((0,L);\mathbb{R}^{2})$, then the unique solution $(H,V,Z)$ given by Theorem \ref{th0} belongs to $C^{0}([0,T],H^{3}((0,L);\mathbb{R}^{2}))\times C^{2}([0,T])$ and there exists 
a constant $C(T)$ such that
\begin{equation}
\begin{split}
\lVert H(t,\cdot)-H_{1}(t,\cdot)\rVert_{H^{3}\textcolor{black}{(0,L)}}&+\lVert V(t,\cdot)-V_{1}(t,\cdot)\rVert_{H^{3}\textcolor{black}{(0,L)}}+|Z|\\
&\leq C(T)\left(\lVert H^{0}(\cdot)-H_{1}(0,\cdot)\rVert_{H^{3}\textcolor{black}{(0,L)}}+\lVert V^{0}(\cdot)-V_{1}(0,\cdot)\rVert_{H^{3}\textcolor{black}{(0,L)}}+|Z^{0}|\right).
\end{split}
\label{estimate2}
\end{equation}
\label{r0}
\end{rmk}
We recall the definition of exponential stability
\begin{defn}
We say that a trajectory $(H_{1},V_{1})$ is exponentially stable for the $H^{2}$ norm if there exists $\nu>0$, $C>0$ and $\gamma>0$ such that for any $T>t_{0}\geq 0$ and any $(H^{0},V^{0},Z^{0})$ satisfying
\begin{equation}
\lVert H^{0}(\cdot)-H_{1}(t_{0},\cdot)\rVert_{H^{2}(0,L)}+\lVert V^{0}(\cdot)-V_{1}(t_{0},\cdot)\rVert_{H^{2}(0,L)}+|Z^{0}|\leq \nu,
\end{equation}
and the compatibility conditions \eqref{compat}, the system \eqref{Stvenant}, \eqref{Z}, \eqref{boundary1} \textcolor{black}{with initial condition $(H^{0},V^{0},Z^{0})$ at $t_{0}$} has a unique solution $(H,V,Z)\in (C^{0}([t_{0},T],H^{2}((0,L))))^{2}\times C^{1}([t_{0},T])$ and,
\begin{equation}
\begin{split}
&\lVert H(t,\cdot)-H_{1}(t,\cdot)\rVert_{H^{2}(0,L)}+\lVert V(t,\cdot)-V_{1}(t,\cdot)\rVert_{H^{2}(0,L)}+|Z|\\
&\leq Ce^{-\gamma t}\left(\lVert H^{0}(\cdot)-H_{1}(t_{0},\cdot)\rVert_{H^{2}(0,L)}+\lVert V^{0}(\cdot)-V_{1}(t_{0},\cdot)\rVert_{H^{2}(0,L)}+|Z^{0}|\right),\text{  }\forall\text{  }t\in[t_{0},T].
\end{split}
\end{equation} 
\end{defn}
\begin{rmk}
From \eqref{target} and Sobolev inequality, this exponential stability implies in particular the exponential convergence of $H(t,L)$ to $H_{c}$.
\end{rmk}

\vspace{\baselineskip}
We can now state the main results of this article
\begin{thm}[Exponential stability]
There exists $\delta>0$ such that if $\lVert\partial_{t}Q_{0}\rVert_{C^{3}([0,+\infty))}\leq \delta$, then the trajectory $(H_{1},V_{1})$ given by \eqref{target} of system \eqref{Stvenant}, \eqref{Z}, \eqref{boundary1} is exponentially stable for the $H^{2}$ norm if:
\begin{equation}
\begin{split}
&k_{p}>-1\text{  and  }k_{I}>0,\\
\text{  or   }&k_{p}<-1-\frac{gH_{1}(t,L)-V^{2}_{1}(t,L)}{v_{G}V_{1}(t,L)}\text{   and   }k_{I}<0.
\end{split}
\label{cond}
\end{equation}
\label{th1}
\end{thm}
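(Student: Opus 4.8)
The plan is to run a Lyapunov argument in the spirit of \cite{HS,BastinCoronPI}, working with the perturbation $(h,v):=(H-H_{1},V-V_{1})$ together with the integral state $Z$. Since the target $(H_{1},V_{1})$ solves \eqref{target} with the same principal part and same $k,C$ as \eqref{Stvenant}, the perturbation satisfies a \emph{quasilinear} hyperbolic system in which the target variables cancel at the flux level; its flux Jacobian, evaluated along $(H,V)$, has eigenvalues $\lambda_{\pm}=V\pm\sqrt{gH}$, and by the fluvial assumption \eqref{fluvial} one has $\lambda_{+}>0>\lambda_{-}$ on $[0,L]$ and $\det A=V^{2}-gH\neq0$. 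First I would diagonalize into Riemann-type coordinates $(\xi_{+},\xi_{-})$ chosen so that $\xi_{+}$ is outgoing and $\xi_{-}$ incoming at $x=L$ (and the reverse at $x=0$). Two structural facts make the boundaries favourable: at $x=0$ both $HV$ and $H_{1}V_{1}$ equal $Q_{0}$, so the flux perturbation and all its time derivatives vanish identically there; and at $x=L$ the corrective term $-f(t)$ in \eqref{defu12} removes the target flux exactly, leaving the \emph{clean} relation $(HV-H_{1}V_{1})(t,L)=v_{G}(1+k_{p})\,h(t,L)-v_{G}k_{I}Z$, together with $\dot Z=-h(t,L)$ from \eqref{Z}.

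Next I would build a Lyapunov functional controlling the full $H^{2}$ norm. Because $A$ is invertible in the fluvial regime, it suffices to control the $L^{2}$ norms of $\partial_{t}^{\,i}(h,v)$ for $i=0,1,2$, since the equation then recovers the corresponding spatial derivatives; differentiating \eqref{Stvenant}, \eqref{Z}, \eqref{boundary1} in time yields hyperbolic systems and boundary relations for these time derivatives. I would therefore set
\[
\mathcal{V}\;=\;\sum_{i=0}^{2}\int_{0}^{L}\Big(p(x)\,(\partial_{t}^{\,i}\xi_{+})^{2}\,e^{-\mu x}+r(x)\,(\partial_{t}^{\,i}\xi_{-})^{2}\,e^{+\mu x}\Big)\,dx\;+\;\beta Z^{2},
\]
with positive weights $p,r$ and parameters $\mu,\beta>0$ to be tuned, and where, following the remark after Theorem \ref{th0}, $Z$ is treated as a zero-speed component contributing no interior term and entering only through $x=L$.

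I would then differentiate $\mathcal{V}$ along the flow and integrate by parts, splitting $\dot{\mathcal{V}}$ into an interior part and boundary parts at $x=0$ and $x=L$. For the interior, taking $\mu$ large enough lets the exponential weights dominate the zero-order and transport coupling produced by the friction $k$, the slope $C$ and the spatial derivatives $\partial_{x}(H_{1},V_{1})$, giving an interior bound $\le -\gamma\,\mathcal{V}_{\mathrm{int}}$ \emph{uniformly} in $k$, $C$ and $L$; this is the mechanism behind the condition being independent of friction, slope and length. At $x=0$ the vanishing of the flux perturbation ties the incoming variable $\xi_{+}(t,0)$ linearly to the outgoing $\xi_{-}(t,0)$, so the $x=0$ boundary form is rendered non-positive by a suitable choice of $p(0)/r(0)$, with no contribution from $\partial_{t}Q_{0}$.

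The main obstacle is the boundary at $x=L$, where the PI law couples the PDE to the ODE $Z$, and this is exactly where \eqref{cond} must enter. Inserting the clean relation above (and its time-differentiated analogues at each level $i$) together with $2\beta Z\dot Z=-2\beta Z\,h(t,L)$, the $x=L$ contribution of $\dot{\mathcal{V}}$ becomes a quadratic form in $(\xi_{+}(t,L),\xi_{-}(t,L),Z)$ whose matrix depends only on $v_{G}$, $k_{p}$, $k_{I}$, the boundary weights, and $\lambda_{\pm}(t,L)$. The heart of the proof is to choose $\beta$ and the ratio $p(L)/r(L)$ so that this form is negative definite precisely under \eqref{cond}: the two branches correspond to the two admissible signs of $v_{G}(1+k_{p})$ relative to $\lambda_{+}(t,L)$, while the explicit threshold $-1-(gH_{1}-V_{1}^{2})/(v_{G}V_{1})$ at $x=L$ is the value at which the boundary form degenerates (note $gH_{1}-V_{1}^{2}=-\lambda_{+}\lambda_{-}>0$), and the sign of $k_{I}$ is dictated by the need to balance the $Z$-column against $\dot Z=-h(t,L)$. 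Finally I would assemble the three pieces. Because \eqref{defu12} cancels the target flux at $x=L$ and the $x=0$ flux perturbation vanishes identically, the time-variation of the target enters $\dot{\mathcal{V}}$ only \emph{multiplicatively}, through coefficient time-derivatives $\partial_{t}(H_{1},V_{1})=O(\lVert\partial_{t}Q_{0}\rVert_{C^{3}})$ controlled by Proposition \ref{propISS}, hence bounded by $C\delta\,\mathcal{V}$, and the quadratic remainders of the quasilinearization by $C\nu\,\mathcal{V}$. For $\delta$ and $\nu$ small this gives $\dot{\mathcal{V}}\le -\tfrac{\gamma}{2}\,\mathcal{V}$ — genuine exponential decay rather than mere ISS — and the equivalence $\mathcal{V}\sim\lVert h\rVert_{H^{2}}^{2}+\lVert v\rVert_{H^{2}}^{2}+Z^{2}$ yields the stated estimate.
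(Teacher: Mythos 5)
Your overall architecture (perturbation variables, Riemann-type coordinates, weighted $L^{2}$ functionals on $\partial_{t}^{i}$ for $i=0,1,2$, a negative-definite boundary form at $x=L$ encoding \eqref{cond}) matches the paper's, but the step you dispose of in one sentence --- the interior estimate --- is exactly where the difficulty of this theorem lies, and the mechanism you invoke for it fails. You claim that ``taking $\mu$ large enough'' makes the interior contribution $\le-\gamma\mathcal{V}_{\mathrm{int}}$ \emph{uniformly} in $k$, $C$ and $L$. It does not. After integration by parts the interior contribution is $-\mu\int_{0}^{L}(\lambda_{1}f_{1}e^{-\mu x}\xi_{+}^{2}+\lambda_{2}f_{2}e^{\mu x}\xi_{-}^{2})dx$ minus a quadratic form whose off-diagonal coefficient is $\gamma_{2}f_{1}e^{-\mu x}+\delta_{1}f_{2}e^{\mu x}$ (the form $I_{2}$ of \eqref{I1I2}), where $\gamma_{i},\delta_{i}$ carry the friction and the slope. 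Positive definiteness requires the product of the diagonal coefficients (whose weights $e^{-\mu x}e^{\mu x}$ cancel) to dominate the \emph{square} of that cross coefficient, which contains $e^{+2\mu x}$; so increasing $\mu$ makes the imbalance worse, and with essentially constant weights one recovers the classical restriction of the type $\mu e^{-\mu L}\gtrsim\lVert\text{source}\rVert$, i.e.\ a cap on the size of $kL$ and $\lVert C\rVert_{\infty}L$ --- precisely the obstruction (\cite{C1,BastinCoron22}, \cite[Chap. 6]{BastinCoron1D}) that this theorem is designed to overcome. In the paper $\mu$ is taken \emph{small} (it only sets the decay rate $\gamma=\mu\min(\lambda_{1},\lambda_{2})$), and the interior condition \eqref{c3} is met not by exponential weights but by a special construction of $f_{1},f_{2}$ from a \emph{global} solution of a Riccati-type ODE: Lemma \ref{lem2} shows that $\chi=\lambda_{2}\phi/\lambda_{1}$ solves $\partial_{x}\chi=\left|\phi\gamma_{2}/\lambda_{1}+\phi^{-1}\delta_{1}\chi^{2}/\lambda_{2}+\sqrt{g/H_{1}}\,\partial_{t}H_{1}\right|$ on all of $[0,L]$, an identity relying on the precise algebraic structure of the Saint-Venant coefficients \eqref{gamma}; the weights are then $f_{1}=\phi_{1}^{2}/(\lambda_{1}\chi_{\varepsilon_{2}})$, $f_{2}=\phi_{2}^{2}\chi_{\varepsilon_{2}}/\lambda_{2}$. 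Nothing in your proposal plays this role, so the independence of the result from $k$, $C$ and $L$ is asserted, not proved.

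A second, structural, gap concerns the integral state. Your functional carries only $\beta Z^{2}$, but once the boundary conditions are differentiated in time (as you must do for the levels $i=1,2$), the relation \eqref{bound} gives $\partial_{t}\xi_{-}(t,L)$ in terms of $\partial_{t}\xi_{+}(t,L)$ \emph{and} $\dot Z$, and $\partial_{tt}^{2}\xi_{-}(t,L)$ in terms of $\partial_{tt}^{2}\xi_{+}(t,L)$ and $\ddot Z$. The reflected terms $(\partial_{t}\xi_{-}(t,L))^{2}$ and $(\partial_{tt}^{2}\xi_{-}(t,L))^{2}$ therefore inject \emph{positive} multiples of $k_{I}^{2}\dot Z^{2}$ and $k_{I}^{2}\ddot Z^{2}$ into $\dot{\mathcal{V}}$, and with only $\beta Z^{2}$ in $\mathcal{V}$ there is no matching negative term at those levels: the level-$1$ boundary form in $(\partial_{t}\xi_{+}(t,L),\dot Z)$ has a negative $x^{2}$ coefficient but a positive $y^{2}$ coefficient, hence cannot be made negative semidefinite (unless $k_{I}=0$). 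Your claim that the $x=L$ contribution is a quadratic form in the three variables $(\xi_{+}(t,L),\xi_{-}(t,L),Z)$ is only true at level $i=0$. The paper resolves this by including $q\dot Z^{2}$ in $V_{b}$ and $q\ddot Z^{2}$ in $V_{c}$ (see \eqref{V2V31}), whose derivatives $2q\dot Z\ddot Z$ and $2q\ddot Z\dddot Z$, rewritten through the differentiated ODE for $Z$, supply the missing negative $\dot Z^{2}$ and $\ddot Z^{2}$ contributions, so that each level produces its own copy of the same negative-definite form $I_{1}(\partial_{t}^{i}u_{1}(t,L),\partial_{t}^{i}Z)$, $i=0,1,2$. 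Your observations at $x=0$ (vanishing of the flux perturbation) and the identification of \eqref{cond} with non-degeneracy of the $x=L$ form are correct in spirit and agree with the paper, but as written the proof does not close at either of the two points above.
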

This result is proved in Section \ref{s2}. 
The main idea of the proof consist in finding a local convex and dissipative entropy for the system  \eqref{Stvenant}, \eqref{Z}, \eqref{boundary1}.\\

In particular, in the case where $Q_{0}$ is constant, we can use the static boundary control \eqref{defu1}, and we have the following corollary:
\begin{cor}
If $Q_{0}$ is constant, then the steady-state $(H^{*},V^{*})$ of the system \eqref{Stvenant}, \eqref{Z}, \eqref{boundary2} given by \eqref{targetsteady0}--\eqref{targetsteady}
is exponentially stable for the $H^{2}$ norm if:
\begin{equation}
\begin{split}
k_{p}>-1\text{  and  }&k_{I}>0,\\
\textcolor{black}{\text{  or   }k_{p}<-1-\frac{gH^{*}(L)-V^{*2}(L)}{v_{G}V^{*}(L)}}&\text{   and   }k_{I}<0.
\end{split}
\label{cond1}
\end{equation}
\label{cor1}
\end{cor}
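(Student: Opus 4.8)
The plan is to deduce Corollary \ref{cor1} directly from Theorem \ref{th1} by specializing to constant $Q_0$ and showing that the dynamic formulation collapses onto the static one, with no new analysis required. First I would observe that if $Q_0$ is constant then $\partial_t Q_0\equiv 0$, so the hypothesis $\lVert\partial_t Q_0\rVert_{C^3([0,+\infty))}\leq\delta$ of Theorem \ref{th1} is trivially satisfied for any $\delta>0$. Moreover, as already established just below \eqref{fluvial}, the uniqueness of the solution of \eqref{target}--\eqref{initialtarget} forces the target trajectory to coincide with the steady-state, $(H_1,V_1)=(H^*,V^*)$, for all time. In particular $H_1(t,L)=H^*(L)=H_c$ and $V_1(t,L)=V^*(L)$ are time-independent, so the gain conditions \eqref{cond} of Theorem \ref{th1} reduce to exactly the conditions \eqref{cond1} of the corollary.

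Next I would reconcile the two boundary formulations. When $Q_0$ is constant the feed-forward term $f(t)=H_1(t,L)V_1(t,L)/v_G=H^*(L)V^*(L)/v_G$ appearing in the dynamic control \eqref{defu12} is itself constant, so \eqref{defu12} differs from the static control \eqref{defu1} only by a constant, which by the construction of $Z$ can be absorbed into the integral variable. Concretely, I would introduce the shift $\hat Z:=Z-Z^*$ with $Z^*:=(v_G H_c-H^*(L)V^*(L))/(v_G k_I)$, well-defined since $k_I\neq 0$ under \eqref{cond1}. Because $\hat Z$ differs from $Z$ by a constant it still satisfies \eqref{Z}, and a direct substitution of $Z=\hat Z+Z^*$ shows that the boundary condition \eqref{boundary2}, rewritten in terms of $(H,V,\hat Z)$, is precisely \eqref{boundary1} with $(H_1,V_1)=(H^*,V^*)$. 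Equivalently, $Z^*$ is exactly the equilibrium value of the integral term for the static problem, obtained by evaluating \eqref{boundary2} at $(H^*,V^*)$ and using $H^*(L)=H_c$; thus the steady-state $(H^*,V^*,Z^*)$ of the static system corresponds to the equilibrium $(H^*,V^*,0)$ in the framework of Theorem \ref{th1}.

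With this identification in hand, the conclusion follows at once: applying Theorem \ref{th1} to the shifted system (whose compatibility conditions \eqref{compat} are likewise transported by the same constant shift) yields exponential decay of $\lVert H-H^*\rVert_{H^2(0,L)}+\lVert V-V^*\rVert_{H^2(0,L)}+|\hat Z|$. Since $|\hat Z|=|Z-Z^*|$ and the shift is a fixed constant, this is exactly the exponential stability of $(H^*,V^*)$, with the integral state converging to $Z^*$, for the static system \eqref{Stvenant}, \eqref{Z}, \eqref{boundary2} under \eqref{cond1}.

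I expect the only point requiring genuine care to be the bookkeeping in the second step: verifying that the constant shift of $Z$ transforms \eqref{boundary2} into \eqref{boundary1} exactly, and confirming that the stability notion, stated in terms of $|Z|$ for the trajectory problem, transfers faithfully to convergence of the integral state to the offset $Z^*$ in the steady-state problem. The entire Lyapunov construction is carried by Theorem \ref{th1}, so beyond this translation there is nothing further to prove.
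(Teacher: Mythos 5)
Your proposal is correct and follows essentially the same route as the paper's own proof: specialize Theorem \ref{th1} to constant $Q_{0}$, use uniqueness to identify $(H_{1},V_{1})=(H^{*},V^{*})$ so that \eqref{cond} becomes \eqref{cond1}, and absorb the constant feedforward/offset into the integral variable $Z$ so that \eqref{boundary2} matches \eqref{boundary1}. Your version is merely more explicit than the paper's two-line argument, in particular in computing the exact shift $Z^{*}=(v_{G}H_{c}-H^{*}(L)V^{*}(L))/(v_{G}k_{I})$ and in checking that the compatibility conditions and the stability statement transport under this constant shift.
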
 
\begin{proof}
This is a particular case of Theorem \ref{th1}. To see this, note, as mentioned earlier, that when $Q_{0}$ is constant, then $(H_{1},V_{1})=(H^{*},V^{*})$. 
Then, observe that $f(t)$ given in \eqref{defu12} is a constant that can be added in $Z$ (i.e. we can re-define $Z:=Z-f(t)$, which still satisfies \eqref{Z}). 
\end{proof}

\subsection{Comparison with existing results and contribution of this paper}
Many results exists in the literature concerning this stabilization problem (e.g. \cite{dos2008, XuSallet, BastinCoron2013, XuSallet2014, Trinh2015, BastinCoronTamasoiu2015, BastinCoronPI}). To our knowledge the most advanced result for the full non-linear system is \cite{BastinCoronPI} where the authors shows that if there is no slope, i.e. $C(x) = 0$, then the system can always be stabilized by the PI control \eqref{defu1} as long as the steady-state exists, and they give the sufficient condition 
\begin{equation}\label{condbastincoron}
k_{p}>0\;\text{ and }\;k_{I}>0.
\end{equation}
Note that this is the first result that allows an arbitrary size of source term and length.
In this paper, using a different type of Lyapunov function, we manage to show a more general result. Our main contributions are the following:
\begin{itemize}
\item The result holds for an arbitrary friction and also an arbitrary slope $C(x)\in C^{2}([0,L])$. Physically this means that the source can be non-dissipative and increase the energy of the system compared to the case where there is only friction.
\item We find a less restrictive stability condition
\begin{equation}
k_{p}>-1\;\text{ and }\;k_{I}>0,
\end{equation}
and we also show that another condition is sufficient:
\begin{equation*}
k_{p}<-1-\frac{gH^{*}(L)-V^{*2}(L)}{v_{G}V^{*}(L)}\;\text{ and }\; k_{I}<0.
\end{equation*}
This one is counter intuitive as $k_{p}<-1$ and $k_{I}<0$. \textcolor{black}{This means that if the height of the water is too high at $L$ the control would reduce the aperture of the gate in $L$ and reduce the flow that we let exit the system,
which intuitively should increase even more the height of the water at $L$.}
\item Our result holds also when stabilizing a slowly varying trajectory rather than a steady-state (that might not exists in practical case). In this case we use a kind of feedforward term in the boundary control.
\item In addition to the exponential stability, we show the Input-to-State stability with respect to an unknown inflow. In this case the only knowledge required on the system is the height of the water at $x=L$.
\end{itemize}

\textcolor{black}{Note that, just like \cite{BastinCoronPI}, this approach uses very little knowledge of the state of the system, as we only measure the height at the boundary $x=L$.}

\begin{rmk}
\textcolor{black}{When the system is homogeneous, our conditions \eqref{cond1} are optimal (necessary and sufficient) \cite{BastinCoronTamasoiu2015}, \cite[Section 2.2.4.1]{BastinCoron1D}.}
\end{rmk}

\begin{rmk}[Alternative notation in literature]
In the literature, results about PI control of the Saint-Venant equations sometimes leave the step of modeling the spillway and use a generic formulation of the PI control on the outflow rate of the form
\begin{equation}
H(t,L)V(t,L)=k_{1}(H(t,L)-H_{c})-k_{2}Z,
\end{equation}
where $Z$ is the integral term, still given by \eqref{Z}.
Note that, with these notations, the sufficient condition of Corollary \ref{cor1} becomes 
\begin{equation}
\begin{split}
k_{p}>0\text{  and  }&k_{I}>0,\\
\textcolor{black}{\text{  or   }k_{p}<-\frac{gH^{*}(L)-V^{*2}(L)}{V^{*}(L)}}&\text{   and   }k_{I}<0.
\end{split}
\end{equation} 
\end{rmk}

\subsection{Case of time-varying input disturbance $Q_{0}(t)$: ISS estimate}
In practical situation, however, we may have also 
little knowledge of the target trajectory $(H_{1},V_{1})$ or the input disturbance $Q_{0}(t)$ and we only know $H_{c}$. In this case we cannot use a controller of the form \eqref{boundary1},
but only a static controller of the form \eqref{boundary2}, namely
\begin{equation}
H(t,L)V(t,L)=v_{G}(1+k_{p})H(t,L)-v_{G}k_{p}H_{c}-v_{G}k_{I}Z.
\label{boundary22}
\end{equation} 
In this case, it is impossible to aim at stabilizing the target trajectory $(H_{1},V_{1})$, but we still have the Input-to-state Stability with respect to the input disturbance $\partial_{t}Q_{0}$,
\begin{thm}
There exists $\nu>0$, $\delta>0$, $\gamma>0$ and $C$, such that if
$\lVert\partial_{t}Q_{0}\rVert_{C^{2}([0,+\infty))}\leq\delta$, then for any $T>0$ and $(H^{0},V^{0})\in \textcolor{black}{(H^{2}(0,L))^{2}}$ such that
\begin{equation*}
\lVert H^{0}-H^{*}\rVert_{H^{2}(0,L)}+\lVert V^{0}-V^{*}\rVert_{H^{2}(0,L)}\leq \nu,
\end{equation*} 
the system \eqref{Stvenant}, \eqref{Z}, \eqref{boundary2} with initial condition $(H^{0},V^{0})$ has a unique solution $(H,V)\in C^{0}([0,T],H^{2}(0,L))$ which satisfies the following ISS inequality
\begin{equation}
\begin{split}
&\lVert H(t,\cdot)-H_{0}(t,\cdot)\rVert_{H^{2}(0,L)}+\lVert V(t,\cdot)-V_{0}(t,\cdot)\rVert_{H^{2}(0,L)}\\
&\leq \textcolor{black}{Ce^{-\gamma t}\left(\lVert H^{0}-H^{*},V^{0}-V^{*}\rVert_{H^{2}(0,L)}+\int_{0}^{t}(|\partial_{t}Q_{0}(s)|+|\partial_{tt}^{2}Q_{0}(s)|+|\partial_{ttt}^{3}Q_{0}(s)|)e^{\gamma s}ds\right)}\textcolor{black}{.}
\end{split}
\label{ISSHV}
\end{equation}
\label{th2}
\end{thm}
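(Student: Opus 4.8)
The plan is to deduce the estimate from the proof of Theorem \ref{th1} by comparing the closed-loop solution $(H,V)$ not to the exact trajectory $(H_1,V_1)$ but to the frozen steady-state family $(H_0,V_0)=(H^*_t,V^*_t)$, at the price of a forcing term of size $O(\partial_t Q_0)$. First I would introduce the error variables $\bar h=H-H_0$, $\bar v=V-V_0$ together with the shifted integrator $\bar Z=Z-Z_0$, where $Z_0(t):=(v_G H_c-Q_0(t))/(v_G k_I)$ is the value of $Z$ that makes $(H_0(t,\cdot),V_0(t,\cdot))$ an equilibrium of the static feedback \eqref{boundary22} for the instantaneous flux $Q_0(t)$. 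The point of the shift is that each $(H^*_t,V^*_t)$ solves only the steady equations \eqref{targetsteady0}, with $H_0(t,L)=H_c$ and flux $H_0V_0=Q_0(t)$; hence inserting $(H_0,V_0)$ into the full system \eqref{Stvenant} leaves exactly the residual $(\partial_t H_0,\partial_t V_0)$, which by the chain rule equals $\partial_{Q_0}(H_0,V_0)\,\partial_t Q_0$ and is therefore controlled by $\partial_t Q_0$.

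Next I would write down the error system and check that its boundary structure is homogeneous. At $x=0$ the flux error vanishes because $HV(t,0)=Q_0(t)=H_0V_0(t,0)$. At $x=L$, a direct computation using the definition of $Z_0$ and the identity $v_G(1+k_p)H_c-v_Gk_pH_c=v_GH_c$ collapses the feedback \eqref{boundary22} to the homogeneous relation $\overline{HV}(t,L)=v_G(1+k_p)\bar h(t,L)-v_Gk_I\bar Z$, while the integrator obeys $\dot{\bar Z}=-\bar h(t,L)+\partial_t Q_0/(v_Gk_I)$. Thus the triple $(\bar h,\bar v,\bar Z)$ satisfies \emph{exactly} the same homogeneous boundary conditions and integrator coupling as the error system treated in the proof of Theorem \ref{th1}, the only differences being the interior source $(-\partial_t H_0,-\partial_t V_0)$ and the boundary source $\partial_t Q_0/(v_Gk_I)$ in the $\bar Z$ equation. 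This is the precise sense in which Theorem \ref{th2} is the perturbation of Theorem \ref{th1} by a forcing proportional to $\partial_t Q_0$.

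I would then apply verbatim the Lyapunov functional $\mathcal L$ built in Section \ref{s2} for Theorem \ref{th1} to $(\bar h,\bar v,\bar Z)$; it is well defined since the uniform fluvial bounds \eqref{fluvial} for $(H,V)$ and \eqref{fluvial0} for $(H_0,V_0)$ keep the local entropy convex. Under the gain condition \eqref{cond1} the homogeneous part of $\dot{\mathcal L}$ is bounded by $-\mu\mathcal L$ for some $\mu>0$, with the boundary terms absorbed exactly as in Theorem \ref{th1}; the new contributions are the pairings of the sources against the test functions. Using the $C^3$ dependence of $t\mapsto(H^*_t,V^*_t)$ established after \eqref{fluvial0}, the residual and its $x$-derivatives up to second order (equivalently, via the equations, its time derivatives) are bounded pointwise by $|\partial_tQ_0|+|\partial_{tt}^2Q_0|+|\partial_{ttt}^3Q_0|$, so Cauchy--Schwarz renders these contributions of size $C\big(|\partial_tQ_0|+|\partial_{tt}^2Q_0|+|\partial_{ttt}^3Q_0|\big)\sqrt{\mathcal L}$. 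Dividing the resulting inequality by $\sqrt{\mathcal L}$ yields
\begin{equation*}
\frac{d}{dt}\sqrt{\mathcal L}\leq -\gamma\sqrt{\mathcal L}+C\big(|\partial_tQ_0|+|\partial_{tt}^2Q_0|+|\partial_{ttt}^3Q_0|\big),\qquad \gamma=\tfrac{\mu}{2},
\end{equation*}
and Duhamel's formula integrates this into precisely the ISS bound \eqref{ISSHV}, after using the norm equivalence between $\sqrt{\mathcal L}$ and the $H^2$ error $\lVert\bar h\rVert_{H^2}+\lVert\bar v\rVert_{H^2}+|\bar Z|$. Well-posedness on $[0,T]$ and the persistence of the fluvial regime \eqref{fluvial} for $\lVert\partial_tQ_0\rVert_{C^2}\leq\delta$ small follow from Theorem \ref{th0} and Remark \ref{r0} together with the usual a priori-estimate and continuation argument.

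The step I expect to be the main obstacle is the bookkeeping of the forcing at the full $H^2$ level: one must differentiate the error equations twice, trading each $\partial_x$ for $\partial_t$ through the system so as to preserve the homogeneous boundary structure, and then verify that every source produced in this process, including those arising from differentiating the $x$-dependent coefficients of the frozen steady-state, remains dominated by the three time derivatives of $Q_0$ uniformly in $x$ and $t$. This is exactly where the uniform fluvial bounds \eqref{fluvial} and the $C^3$ dependence on the parameter $t$ are indispensable; once these uniform estimates are secured, the Lyapunov computation of Theorem \ref{th1} transfers essentially unchanged and the only genuinely new work is tracking the $\partial_t Q_0$ terms through each order.
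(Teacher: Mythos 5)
Your proposal is correct and takes essentially the same route as the paper: the paper's proof (Appendix C) likewise compares $(H,V)$ to the frozen family $(H_{0},V_{0})$, rewrites the static feedback \eqref{boundary2} in the homogeneous form \eqref{boundary1} via a shift of the integrator --- exactly your $\bar Z=Z-Z_{0}$, since the paper's modified equation \eqref{Z2} with $f(t)=H_{c}\partial_{t}V_{0}(t,L)=\partial_{t}Q_{0}(t)$ is precisely your $\dot{\bar Z}=-\bar h(t,L)+\partial_{t}Q_{0}/(v_{G}k_{I})$ --- and then runs the Theorem \ref{th1}/Proposition \ref{propISS} Lyapunov computation with the forcing $(\partial_{t}H_{0},\partial_{t}V_{0})$ and the integrator source bounded via Lemma \ref{lem3}, arriving at the same differential inequality $\dot V\leq-\tfrac{\gamma}{2}V+CV^{1/2}\left(|\partial_{t}Q_{0}|+|\partial_{tt}^{2}Q_{0}|+|\partial_{ttt}^{3}Q_{0}|\right)$ and the same Duhamel integration.
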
 
The proof is given in Appendix \ref{ISS2} and is a consequence from the proof of Theorem \ref{th1}.\\

In Section \ref{s2} we prove Theorem \ref{th1}.
\section{Exponential stability for the $H^{2}$ norm}
\label{s2}
This section is divided in three parts. First we transform the system through a change of variables. 
Then we state two lemma, which simplify the proof of Theorem \ref{th1}. 
Finally we prove Theorem \ref{th1}.

\subsection{A change of variables}
For any solution of \eqref{Stvenant}, \eqref{Z}, \eqref{boundary1} 
we define the perturbation as
\begin{equation}
\begin{pmatrix}
h\\v
\end{pmatrix}=
\begin{pmatrix}
H-H_{1}\\
V-V_{1}
\end{pmatrix}.
\label{change1}
\end{equation}
Let us assume that there exists $\nu\in(0,\nu_{0})$ to be selected later on, such that 
\begin{equation}
\lVert H^{0}(\cdot)-H_{1}(0,\cdot)\rVert_{H^{2}(0,L)}+\lVert V^{0}(\cdot)-V_{1}(0,\cdot)\rVert_{H^{2}(0,L)}+|Z^{0}|\leq \nu.
\end{equation}
The boundary conditions \eqref{boundary1} can be written in the following form
\begin{equation}
\begin{split}
&v(t,0)=\mathcal{B}_{1}(h(t,0),t),\\
&v(t,L)=\mathcal{B}_{2}(h(t,L),Z,t),
\end{split}
\label{bound1}
\end{equation} 
with
\begin{equation}
\begin{split}
&\partial_{1}\mathcal{B}_{1}(0,t)=-\frac{V_{1}(t,0)}{H_{1}(t,0)},\\
&\partial_{1}\mathcal{B}_{2}(0,0,t)=\frac{v_{G}(1+k_{p})-V_{1}(t,L)}{H_{1}(t,L)},\\
&\partial_{2}\mathcal{B}_{2}(0,0,t)=-\frac{v_{G}k_{I}}{H_{1}(t,L)}.
\end{split}
\label{k1k30}
\end{equation}
We introduce the following change of variables:
\begin{equation}
\mathbf{u}: =\begin{pmatrix}
u_{1}\\u_{2}
\end{pmatrix}=
\begin{pmatrix}
v+\sqrt{\frac{g}{H_{1}}}h\\v-\sqrt{\frac{g}{H_{1}}}h
\end{pmatrix}.
\label{change2}
\end{equation} 
Note that this change of variables is very similar to the change of variables used in \cite{BastinCoron22,HS} with the only difference that $(H_{1},V_{1})$ is not a steady-state anymore. 
It corresponds to the transformation in Riemann coordinates for the perturbations. 
Indeed, denoting $S$, $F$ and $G$ by
\begin{gather}
S(x,t)=\begin{pmatrix}\sqrt{\frac{g}{H_{1}(t,x)}} & 1\\-\sqrt{\frac{g}{H_{1}(t,x)}} & 1\end{pmatrix},
\label{defS}\\
F\begin{pmatrix}H\\V\end{pmatrix}=\begin{pmatrix}
        V & H\\
        g & V
       \end{pmatrix},
       \label{defF}\text{  }\text{  }\text{  }\text{  }\text{  }
       G\begin{pmatrix}H\\V\end{pmatrix}=\begin{pmatrix}0\\\frac{kV^{2}}{H}-C(x)\end{pmatrix}\textcolor{black}{,}
\end{gather}
and using \eqref{Stvenant}, \eqref{Z}, \eqref{boundary1}, \eqref{target}, \eqref{change1}--\eqref{change2}, one has
\begin{equation}
\begin{split}
\partial_{t}u_{1}+\Lambda_{1}(\mathbf{u},x,t)\partial_{x}u_{1}+l_{1}(\mathbf{u},x,t)\partial_{x}u_{2}+B_{1}(\mathbf{u},x,t)=0\textcolor{black}{,}\\
\partial_{t}u_{1}-\Lambda_{2}(\mathbf{u},x,t)\partial_{x}u_{2}+l_{2}(\mathbf{u},x,t)\partial_{x}u_{1}+B_{2}(\mathbf{u},x,t)=0\textcolor{black}{,}
\end{split}
\label{sys1}
\end{equation} 
where, 
\begin{equation}
\begin{pmatrix}\Lambda_{1}(\mathbf{u},x,t) & l_{1}(\mathbf{u},x,t)\\l_{2}(\mathbf{u},x,t) & \Lambda_{2}(\mathbf{u},x,t)\end{pmatrix}=S(x,t)F\left(S^{-1}(x,t)\mathbf{u}+\begin{pmatrix}H_{1}(t,x)\\V_{1}(t,x)\end{pmatrix}\right)S^{-1}(x,t)=:A(\mathbf{u},x,t),
\label{defA}
\end{equation}
and
\begin{equation}
\begin{split}
B(\mathbf{u},x,t)=&\begin{pmatrix}B_{1}(\mathbf{u},x,t)\\B_{2}(\mathbf{u},x,t)\end{pmatrix}
=S(x,t)F\left(S^{-1}(x,t)\mathbf{u}+\begin{pmatrix}H_{1}(t,x)\\V_{1}(t,x)\end{pmatrix}\right)\left(\begin{pmatrix} \partial_{x}H_{1}(t,x)\\ \partial_{x}V_{1}(t,x)\end{pmatrix}+\partial_{x}(S^{-1})\mathbf{u}\right)\\
&+S\partial_{t}\begin{pmatrix}H_{1}(t,x)\\V_{1}(t,x)\end{pmatrix}+S(x,t)G\left(S^{-1}(x,t)\mathbf{u}+\begin{pmatrix}H_{1}(t,x)\\V_{1}(t,x)\end{pmatrix}\right)-\partial_{t}S(x,t)S^{-1}(x,t)\mathbf{u}.
\end{split}
\label{defB}
\end{equation}
Therefore
\begin{gather}
\Lambda_{1}(0,x,t)=V_{1}+\sqrt{gH_{1}},\text{  }\text{  }\text{  }\Lambda_{2}(0,x,t)=V_{1}-\sqrt{gH_{1}}\label{Lambda}\textcolor{black}{,}\\
l_{1}(0,x,t)=B_{1}(\mathbf{0},x,t)=0,\text{  }\text{  }\text{  }l_{2}(0,x,t)=B_{2}(\mathbf{0},x,t)=0\textcolor{black}{,}\\
\begin{split}
\frac{\partial{B}_{1}}{\partial_{u}}(0,x,t)&=\gamma_{1}(t,x)u_{1}(t,x)+\gamma_{2}(t,x)u_{2}(t,x),\\
\frac{\partial{B}_{2}}{\partial_{u}}(0,x,t)&=\delta_{1}(t,x)u_{1}(t,x)+\delta_{2}(t,x)u_{2}(t,x).
\end{split}
\label{l1l2}
\end{gather}
where
\begin{equation}
\begin{split}
\gamma_{1}&=\frac{3}{4}\sqrt{\frac{g}{H_{1}}}H_{1x}+\frac{3}{4}V_{1x}+\frac{kV_{1}}{H_{1}}-\frac{kV_{1}^{2}}{2H_{1}^{2}}\sqrt{\frac{H_{1}}{g}}\\
\gamma_{2}&=\frac{1}{4}\sqrt{\frac{g}{H_{1}}}H_{1x}+\frac{1}{4}V_{1x}+\frac{kV_{1}}{H_{1}}+\frac{kV_{1}^{2}}{2H_{1}^{2}}\sqrt{\frac{H_{1}}{g}}\\
\delta_{1}&=-\frac{1}{4}\sqrt{\frac{g}{H_{1}}}H_{1x}+\frac{1}{4}V_{1x}+\frac{kV_{1}}{H_{1}}-\frac{kV_{1}^{2}}{2H_{1}^{2}}\sqrt{\frac{H_{1}}{g}}\\
\delta_{2}&=-\frac{3}{4}\sqrt{\frac{g}{H_{1}}}H_{1x}+\frac{3}{4}V_{1x}+\frac{kV_{1}}{H_{1}}+\frac{kV_{1}^{2}}{2H_{1}^{2}}\sqrt{\frac{H_{1}}{g}}.
\end{split}
\label{gamma}
\end{equation}
And for the boundary conditions, there exists $\nu_{1}\in(0,\nu_{0})$ such that for any $\nu\in(0,\nu_{1})$, one has:
\begin{equation}
\begin{split}
&u_{1}(t,0)=\mathcal{D}_{1}(u_{2}(t,0),t),\\
&u_{2}(t,L)=\mathcal{D}_{2}(u_{1}(t,L),Z,t),\\
&\dot Z=\frac{(u_{1}(t,L)-u_{2}(t,L))}{2}\sqrt{\frac{H_{1}(t,L)}{g}}\textcolor{black}{,}
\end{split}
 \label{bound}
\end{equation}
where $\mathcal{D}_{1}$ and $\mathcal{D}_{2}$ are $C^{2}$ functions and
\begin{equation}
\begin{split}
&
\partial_{1}\mathcal{D}_{1}(0,t)=-\frac{\lambda_{2}(0)}{\lambda_{1}(0)},\\
&
\partial_{1}\mathcal{D}_{2}(0,0,t)=-\frac{\lambda_{1}(L)-v_{G}(1+k_{p})}{\lambda_{2}(L)+v_{G}(1+k_{p})},\\
&
\partial_{2}\mathcal{D}_{2}(0,0,t)=-2\frac{v_{G}k_{I}\sqrt{\frac{g}{H_{1}(t,L)}}}{v_{G}(1+k_{p})+\lambda_{2}(t,L)}.
\end{split}
\label{k1k3}
\end{equation}
Expression \eqref{gamma} is simply a computation, very similar to what it done in \cite{HS} for instance, while the derivation of \eqref{bound} and \eqref{k1k3} are detailed in the appendix.
\begin{rmk}
Obviously, from the change of variables \eqref{change1}--\eqref{change2}, the exponential stability of 
the system \eqref{Stvenant}, \eqref{Z}, \eqref{boundary1} 
is equivalent to the exponential stability of the steady-state $\mathbf{u}^{*}=0$ for the system \eqref{sys1}, \eqref{bound}.
\label{r1}
\end{rmk}
As the operator $A$, given by \eqref{defA}, is a \textcolor{black}{$C^{2}$ function in $\mathbf{u}$,
$t$ and $x$ (and \textcolor{black}{in particular $C^{1}$})}
and as, from \eqref{l1l2} and \eqref{fluvial}, $\Lambda_{1}(\mathbf{0},x,t)>0>\Lambda_{2}(\mathbf{0},x,t)$,
there exists $\nu_{2}\in(0,\nu_{1})$ and $E\in C^{1}(\mathcal{B}_{\nu_{2}}\times(0,L)\times[0,+\infty);\mathcal{M}_{2}(\mathbb{R}))$, where 
$\mathcal{B}_{\nu_{2}}\subset\mathbb{R}^{2}$ is the disc of radius $\nu_{2}$ and center $0$,
such that
for any $\lVert\mathbf{u}(t,\cdot)\rVert_{H^{2}(0,L)}\leq \nu_{2}$,
\begin{equation}
\begin{split}
E(\mathbf{u}(t,x),x,t)A(\mathbf{u}(t,x),x,t)&=D(\mathbf{u}(t,x),x,t)E(\mathbf{u}(t,x),x,t)\textcolor{black}{,}\\
E(\mathbf{0},x,t)&=Id\textcolor{black}{,}
\end{split}
\label{definv}
\end{equation} 
where $D(\mathbf{u}(t,x),x,t)=(D_{i}(\mathbf{u}(t,x),x,t))_{i\in{1,2}}$ is a diagonal matrix and $Id$ is the identity matrix.
Before going any further, let us 
note a few useful properties of these functions. \textcolor{black}{For simplicity in the following we will denote for any $n\in\mathbb{N}^{*}$ and any function $U\in L^{\infty}((0,T)\times(0,L);\mathbb{R}^{n})$ (resp. $L^{\infty}((0,L);\mathbb{R}^{n})$)
\begin{equation}
\begin{split}
&\lVert U\rVert_{\infty}:=\lVert U\rVert_{L^{\infty}((0,T)\times(0,L);\mathbb{R}^{n})},\\
&\text{(resp.}\lVert U\rVert_{\infty}:=\lVert U\rVert_{L^{\infty}((0,L);\mathbb{R}^{n})}\text{)}.
\end{split}
\end{equation} We may also denote $\lVert\mathbf{u}\rVert_{H^{2}(0,L)}$ instead of $\lVert\mathbf{u}(t,\cdot)\rVert_{H^{2}(0,L)}$ to lighten the expressions.}
From the definition of $A$ given in \eqref{defA},
and from \eqref{fluvial},
for $\lVert\mathbf{u}\rVert_{H^{2}(0,L)}\leq \nu_{2}$, there exists a constant $C_{1}$ depending only on $H_{\max}$, $\alpha$ and $\nu_{2}$ such that we have the following estimates
\begin{equation}
\begin{split}
&\max\left(\lVert \partial_{t}(A(\mathbf{u}(t,x),x,t)-A(\mathbf{0},x,t))\rVert_{\infty},\lVert \partial_{t}(D(\mathbf{u}(t,x),x,t)-D(\mathbf{0},x,t))\rVert_{\infty},\lVert \partial_{t}(E(\mathbf{u}(t,x),x,t))\rVert_{\infty}\right)\\
&\leq C_{1}\left(\lVert\mathbf{u}\rVert_{\infty}(\lVert\partial_{t}H_{1}\rVert_{\infty}+\lVert\partial_{t}V_{1}\rVert_{\infty})+\lVert\partial_{t}\mathbf{u}\rVert_{\infty}\right),\\
&\max\left(\lVert \partial_{t}(A(\mathbf{u}(t,x),x,t)-A(\mathbf{0},x,t))\rVert_{\infty},\lVert \partial_{t}(D(\mathbf{u}(t,x),x,t)-D(\mathbf{0},x,t)),\lVert \partial_{t}(E(\mathbf{u}(t,x),x,t))\rVert_{\infty}\right)\\
&\leq C_{1}\left(\lVert\mathbf{u}\rVert_{\infty}(\lVert\partial_{x}H_{1}\rVert_{\infty}+\lVert\partial_{x}V_{1}\rVert_{\infty})+\lVert\partial_{x}\mathbf{u}\rVert_{\infty}\right).
\end{split}
\label{ineq}
\end{equation} 
For $E$ and $D$, this comes from the fact that $E$ and $D$ are $C^{\infty}$ functions with respect to the coefficients of $A$ (note that $D$ is the matrix of eigenvalues of $A$), and that $A\in C^{2}(\mathcal{B}_{\eta_{0}}; C^{1}([0,+\infty)\times[0,L]))$.
\vspace{\baselineskip}

\subsection{Two useful lemma}
We introduce now two lemma, which simplify the proof of Theorem \ref{th1} The first one is a classical result about Lyapunov functions,
\begin{lem}
Let $V: (H^{2}(0,L))^{2}\times\mathbb{R}\times\mathbb{R}_{+}\rightarrow \mathbb{R}_{+}^{*}$ such that there exists a constant $c>0$ such that
\begin{equation}
c\left(\lVert \mathbf{U} \rVert_{H^{2}(0,L)}+|z|\right)\leq V(\mathbf{U},z,t)\leq \frac{1}{c} \left(\lVert \mathbf{U} \rVert_{H^{2}(0,L)}+|z|\right),\text{   }\forall\text{   }(U,z,t)\in (H^{2}(0,L))^{2}\times\mathbb{R}\times\mathbb{R}_{+}.
\label{cV1}
\end{equation}
If there exists $\gamma>0$ and $\delta>0$ such that, for any solution
$(\mathbf{u},Z)$ of the system \eqref{sys1}, \eqref{bound} with initial conditions
satisfying $\lVert \mathbf{u}(0,\cdot)\rVert_{H^{2}(0,L)}+|Z(0)|\leq \delta$, 
\begin{equation}
\frac{d}{dt}\left[V(\mathbf{u}(t,\cdot),t)\right]<-\gamma V(\mathbf{u}(t,\cdot),t)
\label{cV2}
\end{equation}
in a distribution sense, then the system \eqref{sys1}, \eqref{bound} is exponentially stable for the $H^{2}$ norm and $V$ is called a Lyapunov function for the system \eqref{sys1}, \eqref{bound}.
\label{lem1}
\end{lem}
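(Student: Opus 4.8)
The plan is to prove this standard Lyapunov-to-exponential-stability implication by a Gronwall-type argument, working from the differential inequality \eqref{cV2} and the norm-equivalence \eqref{cV1}. Since the statement asserts that $V$ being such a Lyapunov function implies exponential stability for the $H^2$ norm, I would proceed in two stages: first integrate the differential inequality to control $V$ along trajectories, then transfer this control to the $H^2$ norm using the two-sided bound \eqref{cV1}.

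\medskip

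\textbf{Step 1: Integrating the differential inequality.} First I would fix a solution $(\mathbf{u},Z)$ with small enough initial data, $\lVert\mathbf{u}(0,\cdot)\rVert_{H^{2}(0,L)}+|Z(0)|\leq\delta$, guaranteed to exist and be unique on some $[0,T]$ by the well-posedness result (Theorem \ref{th0}, transported through the change of variables of Remark \ref{r1}). From the hypothesis \eqref{cV2}, the function $t\mapsto V(\mathbf{u}(t,\cdot),Z(t),t)$ satisfies $\frac{d}{dt}V<-\gamma V$ in the sense of distributions. Since $t\mapsto V$ is continuous (the solution is $C^0$ in time with values in $H^2$, and $V$ satisfies the norm-equivalence bounds), the distributional inequality can be integrated: the product $e^{\gamma t}V(\mathbf{u}(t,\cdot),Z(t),t)$ has nonpositive distributional derivative and is continuous, hence is nonincreasing. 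This yields
\begin{equation}
V(\mathbf{u}(t,\cdot),Z(t),t)\leq e^{-\gamma t}V(\mathbf{u}(0,\cdot),Z(0),0),\text{  }\forall\text{  }t\in[0,T].
\end{equation}

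\medskip

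\textbf{Step 2: Transfer to the $H^2$ norm.} Now I would apply the lower bound of \eqref{cV1} on the left and the upper bound on the right. Writing $N(t):=\lVert\mathbf{u}(t,\cdot)\rVert_{H^{2}(0,L)}+|Z(t)|$, the equivalence \eqref{cV1} gives $cN(t)\leq V(\mathbf{u}(t,\cdot),Z(t),t)$ and $V(\mathbf{u}(0,\cdot),Z(0),0)\leq\frac{1}{c}N(0)$, so that
\begin{equation}
N(t)\leq\frac{1}{c^{2}}e^{-\gamma t}N(0),\text{  }\forall\text{  }t\in[0,T].
\end{equation}
Returning through the change of variables \eqref{change1}--\eqref{change2}, which is a $C^2$ diffeomorphism on a neighbourhood of the origin and therefore bi-Lipschitz for the $H^2$ norm on small balls, the quantity $N(t)$ is equivalent (up to multiplicative constants depending on $\lVert H_1\rVert$, $\lVert V_1\rVert$ and the fluvial bounds \eqref{Hinfty}--\eqref{fluvial}) to $\lVert H(t,\cdot)-H_1(t,\cdot)\rVert_{H^2(0,L)}+\lVert V(t,\cdot)-V_1(t,\cdot)\rVert_{H^2(0,L)}+|Z|$. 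This produces exactly the exponential decay estimate in the definition of exponential stability, with $C=1/c^{2}$ times the change-of-variables constants and the same rate $\gamma$; choosing $\nu$ small enough that $N(0)\leq\delta$ is ensured, and invoking time-invariance of the argument to start from an arbitrary $t_0$, completes the proof.

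\medskip

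\textbf{The main obstacle} I expect is the rigorous justification of Step 1 in the distributional setting: one must confirm that a continuous function whose distributional derivative is dominated by $-\gamma V$ genuinely satisfies the integrated exponential bound. The clean way is to observe that $\frac{d}{dt}(e^{\gamma t}V)=e^{\gamma t}(\frac{d}{dt}V+\gamma V)\leq 0$ as a distribution, and then use that a continuous function with nonpositive distributional derivative is nonincreasing. A secondary technical point is verifying that $t\mapsto V$ is indeed continuous (needed to pass from the distributional statement to a pointwise one); this follows from the assumed regularity $(\mathbf{u},Z)\in(C^0([0,T],H^2))^2\times C^1([0,T])$ together with the norm-equivalence \eqref{cV1}, which squeezes $V$ between continuous functions of the continuous quantity $N(t)$. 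Everything else is a direct application of the two-sided bound and the smoothness of the change of variables.
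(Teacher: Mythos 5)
Your core argument --- multiplying by $e^{\gamma t}$, using that a continuous function with nonpositive distributional derivative is nonincreasing, and then sandwiching with \eqref{cV1} to obtain $N(t)\leq c^{-2}e^{-\gamma t}N(0)$ where $N(t):=\lVert\mathbf{u}(t,\cdot)\rVert_{H^{2}(0,L)}+|Z(t)|$ --- is the right skeleton, and it is essentially what the classical argument does. For reference, the paper itself gives no proof of this lemma: it invokes it as classical and points to \cite{C1}[Proposition 2.1] for a very similar statement (time-independent $V$, $C^{1}$ norm). Note also that your detour back through the change of variables \eqref{change1}--\eqref{change2} is not needed here: the conclusion of the lemma concerns the system \eqref{sys1}, \eqref{bound} itself, and the transfer to $(H,V)$ is the content of Remark \ref{r1}, not of this lemma.

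There is, however, a genuine gap: you never establish existence of the solution on arbitrarily long time intervals with a smallness threshold independent of the horizon. The paper's definition of exponential stability requires a single $\nu>0$ such that for \emph{every} $T>t_{0}$ the solution exists, is unique and decays on $[t_{0},T]$; Theorem \ref{th0} only supplies a threshold $\nu(T)$ which may degenerate as $T\to\infty$, so fixing a solution ``guaranteed to exist on some $[0,T]$'' and proving decay there is strictly weaker than the conclusion. The missing step is the continuation (bootstrap) argument: fix a unit horizon and set $\nu_{1}:=\nu(1)$; choose $\nu\leq c^{2}\min(\delta,\nu_{1})$; the decay estimate then gives $\lVert\mathbf{u}(n,\cdot)\rVert_{H^{2}(0,L)}+|Z(n)|\leq c^{-2}\nu\leq\min(\delta,\nu_{1})$ at each integer time $n$, so local well-posedness re-applies on $[n,n+1]$ and the hypothesis \eqref{cV2} re-applies with $n$ as initial time; induction yields global existence together with the decay. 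This step silently uses that the bounds \eqref{Hinfty}, \eqref{fluvial} and the constants of Theorem \ref{th0} are uniform in the initial time --- the system is non-autonomous (its coefficients depend on $t$ through $(H_{1},V_{1})$), so the ``time-invariance'' you invoke for starting at arbitrary $t_{0}$ is not literally available and must be replaced by this uniformity. A secondary flaw: your justification of the continuity of $t\mapsto V(\mathbf{u}(t,\cdot),Z(t),t)$ is invalid, since being squeezed between the two continuous functions $c\,N(t)$ and $N(t)/c$ does not make a function continuous. As the lemma assumes nothing about $V$ beyond \eqref{cV1}, this continuity must either be taken as a tacit hypothesis or verified for the concrete functionals $V_{a}+V_{b}+V_{c}$ to which the lemma is later applied (where it does hold, these being compositions of continuous maps); the squeeze argument cannot substitute for that verification.
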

This first lemma reduces the problem of proving the exponential stability to finding a Lyapunov function $V$ for the system \eqref{sys1}, \eqref{bound}. A proper definition of a differential inequality in a distribution sense as in \eqref{cV2} can be found in \cite{C1}. To lighten this article we do not give a proof of this classical lemma, although a proof for a very similar case (Lyapunov function that does not depend explicitly on time and for the $C^{1}$ norm instead) can be found for instance in \cite{C1}[Proposition 2.1], and is easily extended to this case. \\

Our second lemma seems very natural:
\begin{lem}
There exists $l>0$ and $C>0$ 
such that if $\lVert\partial_{t}Q_{0}\rVert_{C^{3}([0,+\infty))}\leq l$, then
\begin{equation}
\begin{split}
&\max\left(\lVert\partial_{t}H_{1}\rVert_{C^{1}([0,+\infty), \textcolor{black}{C^{0}([0,L])})},\lVert\partial_{t}V_{1}\rVert_{C^{1}([0,+\infty), \textcolor{black}{C^{0}([0,L])})}\right)< C \lVert\partial_{t}Q_{0}\rVert_{C^{3}([0,+\infty))}.
\end{split}
\end{equation} 
\label{lem3}
\end{lem}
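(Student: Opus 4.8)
The plan is to express every time derivative of the target trajectory, via the equations \eqref{target}, as a quantity that vanishes when $Q_{0}$ is constant, and is therefore controlled by the $H^{3}$ size of the difference $(w_{1},w_{2}):=(H_{1}-H_{0},V_{1}-V_{0})$ between the trajectory and the quasi-steady family $(H_{0},V_{0})$; this difference is then bounded through the ISS estimate of Proposition \ref{propISS}. The starting point is that, for each fixed $t$, $(H_{0}(t,\cdot),V_{0}(t,\cdot))$ solves the steady system \eqref{targetsteady0}, so that $\partial_{x}(H_{0}V_{0})\equiv 0$ (hence $H_{0}V_{0}\equiv Q_{0}(t)$ is independent of $x$) and the steady momentum relation $V_{0}\partial_{x}V_{0}+g\partial_{x}H_{0}+kV_{0}^{2}/H_{0}-C\equiv 0$ holds identically in $(t,x)$. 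Subtracting these two identities from the first and second equations of \eqref{target} gives $\partial_{t}H_{1}=-\partial_{x}(H_{1}V_{1}-H_{0}V_{0})=-\partial_{x}(H_{0}w_{2}+V_{0}w_{1}+w_{1}w_{2})$ and $\partial_{t}V_{1}=-[(V_{1}\partial_{x}V_{1}-V_{0}\partial_{x}V_{0})+g\partial_{x}(H_{1}-H_{0})+k(V_{1}^{2}/H_{1}-V_{0}^{2}/H_{0})]$. Both right-hand sides are differences that vanish when $(H_{1},V_{1})=(H_{0},V_{0})$, so by the smoothness of the nonlinearities on the uniform fluvial range \eqref{fluvial0}, the uniform bounds on $H_{0},V_{0}$ and their space derivatives, and the embedding $H^{1}(0,L)\hookrightarrow C^{0}([0,L])$, we get $\lVert\partial_{t}H_{1}\rVert_{\infty}+\lVert\partial_{t}V_{1}\rVert_{\infty}\leq C(\lVert w_{1}\rVert_{H^{2}(0,L)}+\lVert w_{2}\rVert_{H^{2}(0,L)})$.

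For the second time derivatives, which enter the $C^{1}([0,+\infty),C^{0}([0,L]))$ norm, I would differentiate these identities once more in $t$, converting each new time derivative into a space derivative through the equations. Since $\partial_{t}(H_{0}V_{0})=\dot Q_{0}(t)$ is $x$-independent, $\partial_{tt}H_{1}=-\partial_{x}\partial_{t}(H_{1}V_{1})=-\partial_{x}(\partial_{x}V_{1}\,\partial_{t}H_{1}+V_{1}\partial_{x}\partial_{t}H_{1}+\partial_{x}H_{1}\,\partial_{t}V_{1}+H_{1}\partial_{x}\partial_{t}V_{1})$, where I substitute the expressions for $\partial_{t}H_{1}$ and $\partial_{t}V_{1}$ just obtained; likewise $\partial_{tt}V_{1}=\partial_{t}\Xi(H_{1},V_{1})$, with $\Xi$ the right-hand side of the $V_{1}$-equation satisfying $\Xi(H_{0},V_{0})\equiv 0$, expands by the chain rule into bounded coefficients times $\partial_{t}H_{1},\partial_{t}V_{1},\partial_{x}\partial_{t}H_{1},\partial_{x}\partial_{t}V_{1}$. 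Every such factor is again a genuine difference: $\partial_{x}\partial_{t}H_{1}=-\partial_{xx}(H_{0}w_{2}+V_{0}w_{1}+w_{1}w_{2})$ and similarly for the others, so each is controlled in $C^{0}$ by $\lVert w_{1}\rVert_{H^{3}(0,L)}+\lVert w_{2}\rVert_{H^{3}(0,L)}$. No $O(1)$ residual survives precisely because each term carries one of these small factors. Collecting the estimates yields, uniformly in $t$ by \eqref{fluvial0}, a bound $\lVert\partial_{t}H_{1}\rVert_{C^{1}([0,+\infty),C^{0})}+\lVert\partial_{t}V_{1}\rVert_{C^{1}([0,+\infty),C^{0})}\leq C(\lVert w_{1}\rVert_{H^{3}(0,L)}+\lVert w_{2}\rVert_{H^{3}(0,L)})$; this trade of time for space derivatives is exactly why three spatial derivatives of $w$ are required.

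It remains to control $\lVert w_{1}\rVert_{H^{3}(0,L)}+\lVert w_{2}\rVert_{H^{3}(0,L)}$ by $C\lVert\partial_{t}Q_{0}\rVert_{C^{3}([0,+\infty))}$. For this I would invoke the $H^{3}$ version of Proposition \ref{propISS} stated in Remark \ref{rmkHpISS} (i.e. $p=3$), which is available once $\lVert\partial_{t}Q_{0}\rVert_{C^{3}}\leq l$ for $l$ small enough. Because the target trajectory under consideration has initial data $H_{1}^{0}=H^{*}$, $V_{1}^{0}=V^{*}$, the decaying initial-data term in \eqref{ISS} vanishes and only the source integral remains; bounding the integrand by $|\partial_{t}Q_{0}|+|\partial_{tt}^{2}Q_{0}|+|\partial_{ttt}^{3}Q_{0}|\leq 3\lVert\partial_{t}Q_{0}\rVert_{C^{3}}$ and using $\big(\int_{0}^{t}e^{\mu s/2}ds\big)e^{-\mu t/2}\leq 2/\mu$ gives $\lVert w_{1}\rVert_{H^{3}(0,L)}+\lVert w_{2}\rVert_{H^{3}(0,L)}\leq C\lVert\partial_{t}Q_{0}\rVert_{C^{3}}$ uniformly in $t$. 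Combined with the previous paragraph, this proves the claim.

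The main obstacle I anticipate is not the ISS input but the bookkeeping guaranteeing that each term produced by the time differentiations is truly a $w$-difference, rather than an $O(1)$ remainder. The structural facts to verify carefully are $\partial_{x}(H_{0}V_{0})\equiv 0$ and $\Xi(H_{0},V_{0})\equiv 0$ \emph{as identities in} $(t,x)$, so that their time derivatives vanish and only the differences against $(H_{0},V_{0})$ persist; one must also check that all coefficients appearing in the chain rule (powers of $H_{1},V_{1}$ and their space derivatives, and of $H_{0},V_{0}$, which are $C^{3}$) stay uniformly bounded in $t$, which again follows from \eqref{fluvial0} together with the boundedness of $Q_{0}$. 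A secondary point is to confirm that the $H^{3}$ regularity and the uniform-in-time constants furnished by Remark \ref{rmkHpISS} are both obtained under the single smallness assumption $\lVert\partial_{t}Q_{0}\rVert_{C^{3}}\leq l$.
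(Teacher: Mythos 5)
Your proof is correct, and it rests on the same two pillars as the paper's own proof in Appendix~\ref{app:prooflem3}: the $H^{3}$ version of the ISS estimate (Proposition~\ref{propISS} with $p=3$, Remark~\ref{rmkHpISS}), applied with $H_{1}^{0}=H^{*}$, $V_{1}^{0}=V^{*}$ so that the decaying initial-data term vanishes and $\lVert H_{1}-H_{0}\rVert_{H^{3}}+\lVert V_{1}-V_{0}\rVert_{H^{3}}\leq C\lVert\partial_{t}Q_{0}\rVert_{C^{3}}$ uniformly in $t$; and the conversion of time derivatives into space derivatives through the equations, followed by the embedding $H^{1}\hookrightarrow C^{0}$. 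Where you genuinely diverge is in the middle step. The paper splits $\partial_{t}H_{1}=\partial_{t}H_{0}+\partial_{t}(H_{1}-H_{0})$, bounds the time derivatives of the quasi-steady family by $C\sum_{n}|\partial_{t}^{n}Q_{0}|$ (estimate \eqref{boundH0V0}, smooth dependence of the steady ODE on the parameter $Q_{0}(t)$), bounds the time derivatives of the difference in $L^{2}$ by its $H^{3}$ norm using the quasilinear system \eqref{sysw} that the difference satisfies, and concludes by the triangle inequality. You instead exploit the identities $\partial_{x}(H_{0}V_{0})\equiv 0$ and $V_{0}\partial_{x}V_{0}+g\partial_{x}H_{0}+kV_{0}^{2}/H_{0}-C\equiv 0$ to write $\partial_{t}H_{1}$, $\partial_{t}V_{1}$ and, after one more time differentiation, $\partial_{tt}^{2}H_{1}$, $\partial_{tt}^{2}V_{1}$ \emph{directly} as spatial derivatives of differences against $(H_{0},V_{0})$, so that no bound on $\partial_{t}H_{0}$ or $\partial_{tt}^{2}H_{0}$ is ever needed; the quasi-steady family enters only through uniform bounds on itself and its space derivatives, which act as coefficients. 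Your version is slightly cleaner (no triangle-inequality step, no recourse to \eqref{boundH0V0}, and the cancellation of the slope term $C(x)$ is explicit), while the paper's version is the more systematic one, reusing machinery already set up for the ISS proof. Two small repairs are needed in your write-up. First, your displayed formula for $\partial_{tt}^{2}H_{1}$ has one $\partial_{x}$ too many: it should read
\begin{equation*}
\partial_{tt}^{2}H_{1}=-\left(V_{1}\,\partial_{x}\partial_{t}H_{1}+\partial_{x}V_{1}\,\partial_{t}H_{1}+\partial_{x}H_{1}\,\partial_{t}V_{1}+H_{1}\,\partial_{x}\partial_{t}V_{1}\right),
\end{equation*}
without the outer $\partial_{x}$; your subsequent accounting (each factor controlled in $C^{0}$ by the $H^{3}$ norm of the difference) is consistent with this corrected formula, not with the one you wrote, since the written one would require a fourth spatial derivative. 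Second, the coefficients $H_{1},V_{1},\partial_{x}H_{1},\partial_{x}V_{1}$ appearing in these expansions are uniformly bounded only because $H_{1}=H_{0}+w_{1}$ with $w$ kept small by the ISS estimate, so this uniformity should be stated as part of the conclusion of the first step rather than assumed; with that said, the argument closes correctly.
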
 
This is a consequence of the ISS property (Proposition \ref{propISS}) and Remark \ref{rmkHpISS} for $p=3$ and is shown in Appendix \ref{app:prooflem3}. 
Thanks to this Lemma, we now only need to show Theorem \ref{th1} with a bound on  $\partial_{t}H_{1}$ and $\partial_{t}V_{1}$ rather than a bound on $\partial_{t}Q_{0}$.
\subsection{Proof of Theorem \ref{th1}}
We can now prove Theorem \ref{th1}.
\begin{proof}[Proof of Theorem \ref{th1}]
From Theorem \eqref{th0}, Remark \ref{r1}, and Lemma \ref{lem1}, one only needs to find a Lyapunov function $V: (H^{2}(0,L))^{2}\times\mathbb{R}\times\mathbb{R}_{+}\rightarrow \mathbb{R}_{+}^{*}$ satisfying 
\eqref{cV1} and \eqref{cV2}. We define the following candidate:
\begin{equation}
\begin{split}
V_{a}(\mathbf{U},z,t):=&\int_{0}^{L}f_{1}(t,x)e^{-\mu x}(E(\mathbf{U}(x),x,t)\mathbf{U})_{1}^{2}(t,x)\\
&+f_{2}(t,x)e^{\mu x}(E(\mathbf{U}(x),x,t)\mathbf{U})_{2}^{2}(t,x)dx+qz^{2}\textcolor{black}{,}
\end{split}
\label{Va}
\end{equation}
where $f_{1}$, $f_{2}$ are positive and bounded functions which will be defined later on, and $\mu$ and $q$ are positives constant which will also be defined later on. Recall that $E$ is still given by \eqref{definv}.
Let $T>\textcolor{black}{t_{0}\geq0}$ and $(\mathbf{u}^{0},Z^{0})\in H^{2}(0,L)\times\mathbb{R}$ satisfying the compatibility condition \eqref{compat} and such that 
\begin{equation}
\left(\lVert \mathbf{u}^{0}\rVert_{H^{2}(0,L)}+|Z^{0}|\right)<\nu,
\label{boundu}
\end{equation} 
where $\nu$ is a constant to be chosen later on but such that $\nu<\min(\nu_{2},\nu(T))$. Recall that $\nu(T)$ is given by Theorem \ref{th0}. 
From Theorem \ref{th0} there exists a unique solution $\mathbf{u}\in C^{0}([\textcolor{black}{t_{0}},T],H^{2}(0,L))$.
We suppose in addition that $(\mathbf{u}^{0},Z^{0})\in H^{3}(0,L)$, and that \eqref{boundu} also hold for the $H^{3}$ norm instead of the $H^{2}$ norm in $u$. From Remark \ref{r0}, 
$(\mathbf{u},Z)\in C^{0}([\textcolor{black}{t_{0}},T]\times H^{3}(0,L))\times C^{3}([\textcolor{black}{t_{0}},T])$.
This assumption is here to allow us to compute easily the derivative of $\mathbf{u}$ but will be relaxed later on by density.\\

Let $\delta>0$ to be chosen later on,
 and assume that 
\begin{equation}
\max(\lVert\partial_{t}H_{1}\rVert_{C^{1}([t_{0},\infty);\textcolor{black}{C^{0}([0,L])})},\lVert\partial_{t}V_{1}\rVert_{C^{1}([t_{0},\infty);\textcolor{black}{C^{0}([0,L])})}<\delta.
\label{boundHV}
\end{equation} 
\textcolor{black}{As this is the only assumption on $H_{1}$ and $V_{1}$, we can assume from now on that $t_{0}=0$ without loss of generality.}\\

Looking at \eqref{Va}, $V_{a}$ is indeed a function defined on $H^{2}(0,L)\times\mathbb{R}\times\mathbb{R}_{+}$, but for notational ease we will denote \textcolor{black}{$V_{a}(t):=V_{a}(\mathbf{u}(t,\cdot),Z(t),t)$, where 
$Z(t)$ is given by \eqref{Z},}
and $E:=E(\mathbf{u}(t,x),x,t)$.
Similarly we introduce
\begin{equation}
\begin{split}
V_{b}(\mathbf{U},t):=&\int_{0}^{L}f_{1}e^{-\mu x}(E(\mathbf{U}(x),x,t)\mathbf{I}(\mathbf{U},x,t))^{2}_{1}+f_{2}e^{\mu x}(E(\mathbf{U}(x),x,t)\mathbf{I}(\mathbf{U},x,t))^{2}_{2}dx\\
&+q\frac{H_{1}(t,L)}{4g}(U_{1}(L)-U_{2}(L))^{2}\textcolor{black}{,}\\
V_{c}(\mathbf{U},t):=&\int_{0}^{L}f_{1}e^{-\mu x}(E(\mathbf{U}(x),x,t)\mathbf{J}(\mathbf{U},x,t))^{2}_{1}+f_{2}e^{\mu x}(E(\mathbf{U}(x),x,t)\mathbf{J}(\mathbf{U},x,t))^{2}_{2}dx\\
&+q\left(\sqrt{\frac{H_{1}(t,L)}{4g}}(I_{1}(t,L)-I_{2}(t,L))+\frac{\partial_{t}H_{1}(t,L)}{4}\sqrt{\frac{1}{gH_{1}(t,L)}}(U_{1}(L)-U_{2}(L))\right)^{2}\textcolor{black}{,}
\label{V2V3}
\end{split}
\end{equation} 
where 
\begin{equation}
\begin{split}
\mathbf{I}(\mathbf{U},x,t):=&\left(A(\mathbf{U},x,t)\partial_{x}(\partial_{t}\mathbf{U})+(\partial_{t}A(\mathbf{U},x,t)+\partial_{\mathbf{U}}A(\mathbf{U},x,t).\partial_{t}\mathbf{U})\partial_{x}\mathbf{U}+\partial_{t}B\left(\mathbf{U},x,t\right)\right.\\
&+\left.(\partial_{\mathbf{U}}B(\mathbf{U},x,t))(\partial_{t}\mathbf{U})\right)\textcolor{black}{,}\\
\mathbf{J}(\mathbf{U},x,t):=&A(\mathbf{U},x,t)\partial_{x}(\partial_{tt}^{2}\mathbf{U})
+(\partial_{\mathbf{U}}A(\mathbf{U},x).\partial_{tt}^{2}\mathbf{U})\partial_{x}\mathbf{U}
+(\partial_{\mathbf{U}}B(\mathbf{U},x))(\partial_{tt}^{2}\mathbf{U})\\
&+(\partial_{tt}^{2}A(\mathbf{U},x,t)+2\partial_{\mathbf{U}}(\partial_{t}A(\mathbf{U},x,t)).\partial_{t}\mathbf{U})\partial_{x}\mathbf{U}\\
&+2\partial_{t}A(\mathbf{U},x,t)\partial_{x}(\partial_{t}\mathbf{U})+2\partial_{\mathbf{U}}A(\mathbf{U},x).\partial_{t}\mathbf{U}\partial_{x}(\partial_{t}\mathbf{U})
\\
&+((\partial^{2}_{\mathbf{U}}A(\mathbf{U},x).\partial_{t}\mathbf{U}).\partial_{t}\mathbf{U})\partial_{x}\mathbf{U}\\
&+\partial_{tt}^{2}B(\mathbf{U},x)+2\partial_{\mathbf{U}}(\partial_{t}B(\mathbf{U},x)).\partial_{t}\mathbf{U}
+(\partial^{2}_{\mathbf{U}}B(\mathbf{U},x).\partial_{t}\mathbf{U})(\partial_{t}\mathbf{U})\textcolor{black}{.}
\end{split}
\label{IJ}
\end{equation}
Observe that for a solution $\mathbf{u}$ of \eqref{sys1}, \textcolor{black}{and using the expression of $Z$ given by \eqref{Z}}, the expressions of $V_{b}(\mathbf{u}(t,\cdot),t)$ and $V_{c}(\mathbf{u}(t,\cdot),t)$ become
\begin{equation}
\begin{split}
V_{b}(\mathbf{u}(t,\cdot),t):=\int_{0}^{L}f_{1}(t,x)e^{-\mu x}(E\partial_{t}\mathbf{u})_{1}^{2}(t,x)+f_{2}(t,x)e^{\mu x}(E\partial_{t}\mathbf{u})_{2}^{2}(t,x)dx+q(\dot Z(t))^{2}\textcolor{black}{,}\\
V_{c}(\mathbf{u}(t,\cdot),t):=\int_{0}^{L}f_{1}(t,x)e^{-\mu x}(E\partial_{tt}^{2}\mathbf{u})_{1}^{2}(t,x)+f_{2}(t,x)e^{\mu x}(E\partial_{tt}^{2}\mathbf{u})_{2}^{2}(t,x)dx+q(\ddot Z(t))^{2}\textcolor{black}{,}
\label{V2V31}
\end{split}
\end{equation} 
which justifies the expression chosen for \eqref{V2V3} and \eqref{IJ}. We also note for notational ease $V_{b}(t):=V_{b}(t,\mathbf{u}(t,\cdot))$ and $V_{c}(t):=V_{c}(t,\mathbf{u}(t,\cdot))$.
Finally we denote $V:=V_{a}+V_{b}+V_{c}$. We start now by dealing with $V_{a}$,
Differentiating $t\rightarrow V_{a}(t)$ with respect to time, using \eqref{sys1}, \eqref{definv} and integrating by parts, one has
\begin{equation}
\begin{split}
 \dot V_{a}=& -2\int_{0}^{L}f_{1}(t,x)e^{-\mu x}(E\mathbf{u})_{1}\left[(EA(\mathbf{u},x,t)\partial_{x} \mathbf{u})_{1}+(EB)_{1}(\mathbf{u},x,t)\right]\\
 &+f_{2}(t,x)e^{\mu x}(E\mathbf{u})_{2}\left[(EA(\mathbf{u},x,t)\partial_{x} \mathbf{u})_{2}+(EB)_{2}(\mathbf{u},x,t)\right]dx\\
 &+\int_{0}^{L}\partial_{t}(f_{1})e^{-\mu x}(E\mathbf{u})_{1}^{2}+\partial_{t}(f_{2})e^{\mu x}(E\mathbf{u})_{2}^{2}dx
\\
&+2\int_{0}^{L}f_{1}e^{-\mu x}(E\mathbf{u})_{1}\left(\left(\partial_{t}E+\partial_{\mathbf{u}}E.\partial_{t}\mathbf{u}\right)\mathbf{u}\right)_{1}+f_{2}e^{\mu x}\left(\left(\partial_{t}E+\partial_{\mathbf{u}}E.\partial_{t}\mathbf{u}\right)\mathbf{u}\right)_{2}dx\\
& +2qZ(t)\dot Z(t)\\
 =&-2\int_{0}^{L}f_{1}(t,x)e^{-\mu x}(E\mathbf{u})_{1}\left[D_{1}(\mathbf{u},x,t)\left(\partial_{x} (E\mathbf{u})-(\partial_{x}E+\partial_{\mathbf{U}}E.\partial_{x}\mathbf{u})\mathbf{u}\right)_{1}\right]\\
 &+f_{2}(t,x)e^{\mu x}(E\mathbf{u})_{2}\left[D_{2}(\mathbf{u},x,t)\left(\partial_{x} (E\mathbf{u})-(\partial_{x}E+\partial_{\mathbf{U}}E.\partial_{x}\mathbf{u})\mathbf{u}\right)_{2}\right]dx\\
 &+\int_{0}^{L}\partial_{t}(f_{1})e^{-\mu x}(E\mathbf{u})_{1}^{2}+\partial_{t}(f_{2})e^{\mu x}(E\mathbf{u})_{2}^{2}dx\\
 &-2\int_{0}^{L}f_{1}e^{-\mu x}(E\mathbf{u})_{1}(EB)_{1}(\mathbf{u},x,t)+f_{2}e^{\mu x}(E\mathbf{u})_{2}(EB)_{2}(\mathbf{u},x,t)dx\\
&+2\int_{0}^{L}f_{1}e^{-\mu x}(E\mathbf{u})_{1}\left(\left(\partial_{t}E+\partial_{\mathbf{u}}E.\partial_{t}\mathbf{u}\right)\mathbf{u}\right)_{1}+f_{2}e^{\mu x}\left(\left(\partial_{t}E+\partial_{\mathbf{u}}E.\partial_{t}\mathbf{u}\right)\mathbf{u}\right)_{2}dx\\
 &+2qZ(t)\dot Z(t)\textcolor{black}{,}\\
 \end{split}
 \label{diffVa0}
 \end{equation}
\begin{equation}
\begin{split}
 \dot V_{a}=& -\left[f_{1}e^{-\mu x}D_{1}(E\mathbf{u})_{1}^{2}+D_{2}f_{2}e^{\mu x}(E\mathbf{u})_{2}^{2}\right]_{0}^{L}\\
 &-\int_{0}^{L}(E\mathbf{u})_{1}e^{-\mu x}\left((-\partial_{x}(D_{1}f_{1})-f_{1}\partial_{u}(D_{1}).\partial_{x}\mathbf{u})(E\mathbf{u})_{1}-2f_{1}D_{1}((\partial_{x}E+\partial_{\mathbf{U}}E.\partial_{x}\mathbf{u})\mathbf{u})_{1}\right)\\
&+(E\mathbf{u})_{2}e^{\mu x}\left((-\partial_{x}(D_{2}f_{2})-f_{2}\partial_{u}(D_{2}).\partial_{x}\mathbf{u})(E\mathbf{u})_{2}-2f_{2}D_{2}((\partial_{x}E+\partial_{\mathbf{U}}E.\partial_{x}\mathbf{u})\mathbf{u})_{2}\right)dx\\
&+\int_{0}^{L}\partial_{t}(f_{1})e^{-\mu x}(E\mathbf{u})_{1}^{2}+\partial_{t}(f_{2})e^{\mu x}(E\mathbf{u})_{2}^{2}dx\\
&-2\int_{0}^{L}f_{1}e^{-\mu x}(E\mathbf{u})_{1}(EB)_{1}(\mathbf{u},x,t)+f_{2}e^{\mu x}(E\mathbf{u})_{2}(EB)_{2}(\mathbf{u},x,t)\\
&+2\int_{0}^{L}f_{1}e^{-\mu x}(E\mathbf{u})_{1}\left(\left(\partial_{t}E+\partial_{\mathbf{u}}E.\partial_{t}\mathbf{u}\right)\mathbf{u}\right)_{1}+f_{2}e^{\mu x}\left(\left(\partial_{t}E+\partial_{\mathbf{u}}E.\partial_{t}\mathbf{u}\right)\mathbf{u}\right)_{2}dx\\
&-\mu\int_{0}^{L}D_{1}f_{1}e^{-\mu x}(E\mathbf{u})_{1}^{2}-D_{2}f_{2}e^{\mu x}(E\mathbf{u})_{2}^{2}dx
 +2qZ(t)\dot Z(t).\\
\end{split}
\label{diffVa2}
\end{equation}
In order to simplify this expression, observe that from \eqref{defA}, \eqref{definv} and \eqref{lambda}, $D_{1}(\mathbf{0},x,t)=\lambda_{1}(t,x)$ and $D_{2}(\mathbf{0},x,t)=-\lambda_{2}(t,x)$. Recall that $H_{1}$ is bounded by $H_{\max}$ from \eqref{Hinfty} and that $gH_{1}-V_{1}^{2}$ is bounded by below by $\alpha$ from \eqref{fluvial}. Using this, the fact that
$D$ is $C^{1}$ in $\mathbf{u}$,
and using also \eqref{ineq} and \eqref{boundHV}, there exists $C>0$ depending only on $H_{\max}$ and $\alpha$, $\nu$ and $\delta$ such that
\begin{equation}
\lVert D_{i}-\text{sgn}(D_{i})\lambda_{i}\rVert_{\infty}\leq \lVert C\mathbf{u}\rVert_{\infty},
\end{equation} 
\begin{equation}
\lVert\partial_{x}D_{i}+\partial_{\mathbf{u}}D_{i}.\partial_{x}\mathbf{u}-\text{sgn}(D_{i})\partial_{x}\lambda_{i}\rVert_{\infty}\leq C\left(\lVert \partial_{x}\mathbf{u}\rVert_{\infty}+\lVert \mathbf{u}\rVert_{\infty}\right),\text{  }i\in\{1,2\},
\end{equation} 
and
\begin{equation}
\lVert\partial_{x}E\rVert_{\infty}\leq C\left(\lVert \mathbf{u}\rVert_{\infty}\right),
\end{equation} 
\begin{equation}
\lVert\partial_{t}E+\partial_{\mathbf{u}}E\partial_{t}\mathbf{u}\rVert_{\infty}\leq C\left(\lVert \mathbf{u}\rVert_{\infty}+\lVert \partial_{t} \mathbf{u}\rVert_{\infty}\right).
\end{equation} 
Thus, using this together with \eqref{diffVa2}
\begin{equation}
\begin{split}
 \dot V_{a}\leq& -\left[f_{1}e^{-\mu x}D_{1}(E\mathbf{u})_{1}^{2}+D_{2}f_{2}e^{\mu x}(E\mathbf{u})_{2}^{2}\right]_{0}^{L}\\
 &-\int_{0}^{L}(E\mathbf{u})_{1}^{2}e^{-\mu x}(-\partial_{x}(\lambda_{1}f_{1})-\partial_{t}(f_{1}))+(E\mathbf{u})_{2}^{2}e^{\mu x}(\partial_{x}(\lambda_{2}f_{2})-\partial_{t}(f_{2}))dx\\
 &-2\int_{0}^{L}f_{1}e^{-\mu x}(E\mathbf{u})_{1}(EB)_{1}(\mathbf{u},x,t)+f_{2}e^{\mu x}(E\mathbf{u})_{2}(EB)_{2}(\mathbf{u},x,t)dx\\
  &-\mu\int_{0}^{L}\lambda_{1}f_{1}e^{-\mu x}(E\mathbf{u})_{1}^{2}+\lambda_{2}f_{2}e^{\mu x}(E\mathbf{u})_{2}^{2}dx
 +2qZ(t)\dot Z(t)\\
 &+C\left(\lVert \mathbf{u}\rVert_{\infty}+\lVert \partial_{x} \mathbf{u}\rVert_{\infty}\right)\int_{0}^{L}(E\mathbf{u})_{1}^{2}+(E\mathbf{u})_{2}^{2} dx\\
 &+C\left(\lVert \mathbf{u}\rVert_{\infty}+\lVert \partial_{x} \mathbf{u}\rVert_{\infty}\right)^{2}\int_{0}^{L}|(E\mathbf{u})_{1}|+|(E\mathbf{u})_{2}|dx,\\
\end{split}
\end{equation}
where $C$ is a constant that may change between lines but 
only depends on $\nu$, an upper bound of $\delta$ (for instance $\delta_{0}$), $\mu$, $H_{\max}$ and $\alpha$. Note that $C$ is continuous in $\mu\in[0,\infty)$, thus it can be made independent of $\mu$ by imposing an upper bound on $\mu$, for instance $\mu\in(0,1]$.
Finally, from the second equation of \eqref{definv}, and the fact that $E$ is $C^{1}$ in $\mathbf{u}$,
there exists a continuous function \textcolor{black}{$\mathcal{E}_{1}$} defined on $\mathcal{B}_{\nu_{2}}\times[0,L]\times[0,T]$ such that, for any vector $\mathbf{v}\in\mathbb{R}^{2}$
\begin{equation}
 E(\mathbf{u}(t,x),x,t)\mathbf{v}-\mathbf{v}=(\mathbf{u}(t,x).\mathcal{E}_{1}(\mathbf{u}(t,x),x,t))\mathbf{v},\text{ }\forall\text{ }(t,x)\in[0,T]\times[0,L].
 \label{recoverv}
\end{equation} 
As $E(\mathbf{u}(t,x),x,t)$ is a $C^{\infty}$ function of the coefficients of $A$, $\mathcal{E}_{1}$ is bounded on $\mathcal{B}_{\nu_{2}}\times[0,L]\times[0,T]$ by a bound that only depends on $\nu_{2}$, $H_{\max}$ and $\alpha$.
Thus there exists a constant $\bar C$ depending only on $\nu_{2}$, $H_{\max}$ and $\alpha$ such that
\textcolor{black}{\begin{equation}
\frac{1}{\bar C}\lVert\mathbf{v}\rVert_{\textcolor{black}{L^{2}((0,L);\mathbb{R}^{2})}}\leq \lVert E\mathbf{v}\rVert_{\textcolor{black}{L^{2}((0,L);\mathbb{R}^{2})}}\leq \bar C \lVert\mathbf{v}\rVert_{\textcolor{black}{L^{2}((0,L);\mathbb{R}^{2})}}.
\label{equivinvu}
\end{equation} }
Thus, using this together with the fact that $D_{1}$ and $D_{2}$ are $C^{1}$ with $\mathbf{u}$, \eqref{Va}, and Young's inequality and then Cauchy-Schwarz inequality on the last integral term,
\begin{equation}
\begin{split}
\dot V_{a}\leq& -\left[f_{1}e^{-\mu x}\lambda_{1}(E\mathbf{u})_{1}^{2}-\lambda_{2}f_{2}e^{\mu x}(E\mathbf{u})_{2}^{2}\right]_{0}^{L}\\
&-\int_{0}^{L}(E\mathbf{u})_{1}^{2}e^{-\mu x}(-\partial_{x}(\lambda_{1}f_{1})-\partial_{t}(f_{1}))+(E\mathbf{u})_{2}^{2}e^{\mu x}(\partial_{x}(\lambda_{2}f_{2})-\partial_{t}(f_{2}))dx\\
&-2\int_{0}^{L}f_{1}e^{-\mu x}(E\mathbf{u})_{1}(EB)_{1}(\mathbf{u},x,t)+f_{2}e^{\mu x}(E\mathbf{u})_{2}(EB)_{2}(\mathbf{u},x,t)dx\\
&-\mu\min\limits_{x\in[0,L]}(\lambda_{1},\lambda_{2})V_{a}\textcolor{black}{+\mu\min\limits_{x\in[0,L]}(\lambda_{1},\lambda_{2})qZ^{2}(t)}
+2qZ(t)\dot Z(t)\\
&+C\left(\lVert \mathbf{u}\rVert_{\infty}+\lVert \partial_{x} \mathbf{u}\rVert_{\infty}\right)\lVert\mathbf{u} \rVert_{L^{2}(0,L)}^{2}\\
&+C\left(\lVert \mathbf{u}\rVert_{\infty}+\lVert \partial_{x} \mathbf{u}\rVert_{\infty}\right)^{3}+C\lVert \mathbf{u}\rVert_{\infty}(|\mathbf{u}(t,0)^{2}|+|\mathbf{u}(t,L)^{2}|).
\end{split}
\end{equation} 
Now, as $E$ and $B$ are $C^{2}$ with $\mathbf{u}$ and continuous with $x$ and $t$, and as $B(\mathbf{0},x,t)=0$, there exists a continuous function \textcolor{black}{$\mathcal{E}_{2}\in C^{0}(\mathcal{B}_{\nu_{2}}\times[0,T]\times[0,L];\mathbb{R}^{2\times 2\times 2})$} such that,
\begin{equation}
 (EB)(\mathbf{u}(t,x),x,t)=\partial_{\mathbf{u}}(EB)(\mathbf{0},x,t).\mathbf{u}(t,x)+(\mathcal{E}_{2}(\mathbf{u},x,t).\mathbf{u}(t,x))\mathbf{u}(t,x),\text{ }\forall\text{ }t\in[0,T]\times[0,L].
 \label{Bprop}
\end{equation} 
Note that from \eqref{defB}, $\mathcal{E}_{2}$ is bounded on $\mathcal{B}_{\nu_{2}}\times[0,L]\times[0,T]$ by a constant that only depends on $\nu_{2}$, $\delta$, $H_{\max}$ and $\alpha$. From \eqref{defB} and \eqref{definv}
$\partial_{\mathbf{u}}(EB)(\mathbf{0},x,t)=\partial_{\mathbf{u}}B(\mathbf{0},x,t)$. Besides, from \eqref{definv}, $E$ is invertible and $C^{1}$, thus an inequality similar to 
\eqref{equivinvu}
 holds for $E^{-1}$, and $\mathbf{u}=E^{-1}(E\mathbf{u})$.
Therefore, using \eqref{Bprop} together with \eqref{recoverv}, \textcolor{black}{the fact that $\mathcal{E}_{1}$ and $\mathcal{E}_{2}$ are bounded,} and the expression of $\partial_{\mathbf{u}}B(\mathbf{0},x,t)$ given in \textcolor{black}{\eqref{l1l2}}--\eqref{gamma}, one has
\begin{equation}
\begin{split}
\dot V_{a}\leq& -\left[f_{1}e^{-\mu x}\lambda_{1}u_{1}^{2}-\lambda_{2}f_{2}e^{\mu x}u_{2}^{2}\right]_{0}^{L}\\
&-\int_{0}^{L}(E\mathbf{u})_{1}^{2}e^{-\mu x}(-\partial_{x}(\lambda_{1}f_{1})-\partial_{t}(f_{1}))+(E\mathbf{u})^{2}e^{\mu x}(\partial_{x}(\lambda_{2}f_{2})-\partial_{t}(f_{2}))dx\\
&-2\int_{0}^{L}f_{1}e^{-\mu x}\gamma_{1}(E\mathbf{u})_{1}^{2}+f_{2}e^{\mu x}\delta_{2}(E\mathbf{u})_{2}^{2}+\left(\gamma_{2}f_{1}e^{-\mu x}+\delta_{1}f_{2}e^{\mu x}\right)(E\mathbf{u})_{1}(E\mathbf{u})_{2}dx\\
&-\mu\min\limits_{x\in[0,L]}(\lambda_{1},\lambda_{2})V_{a}\textcolor{black}{+\mu\min\limits_{x\in[0,L]}(\lambda_{1},\lambda_{2})qZ^{2}(t)}
+2qZ(t)\dot Z(t)\\
&+C\left(\lVert \mathbf{u}\rVert_{\infty}+\lVert \partial_{x} \mathbf{u}\rVert_{\infty}\right)\lVert\mathbf{u} \rVert_{L^{2}(0,L)}^{2}\\
&+C\left(\lVert \mathbf{u}\rVert_{\infty}+\lVert \partial_{x} \mathbf{u}\rVert_{\infty}\right)^{3}+C\lVert \mathbf{u}\rVert_{\infty}(|\mathbf{u}(t,0)^{2}|+|\mathbf{u}(t,L)^{2}|).
\end{split}
\end{equation} 
As $\mathcal{D}_{1}$ and $\mathcal{D}_{2}$ are of class $C^{2}$, 
denoting for simplicity $k_{2}:=\partial_{1} \mathcal{D}_{1}(0,t)$\textcolor{black}{, $k_{1}:=\partial_{1}\mathcal{D}_{2}(0,0,t)$ 
and $k_{3}:=-\partial_{2}\mathcal{D}_{2}(0,0,t)$, and using \eqref{bound}}

\begin{equation}
\begin{split}
 \dot V_{a}&\leq
-\mu\min\limits_{x\in[0,L]}(\lambda_{1},\lambda_{2})V_{a}+\left[f_{1}\lambda_{1}k_{2}^{2}-\lambda_{2}f_{2}\right]u_{2}^{2}(t,0)\\
 &-I_{1}(u_{1}(t,L),Z(t))
 -\int_{0}^{L}I_{2}((E\mathbf{u})_{1},(E\mathbf{u})_{2})dx\\
 &+C\left(\lVert \mathbf{u}\rVert_{\infty}+\lVert \partial_{x} \mathbf{u}\rVert_{\infty}\right)\left(\lVert\mathbf{u} \rVert_{L^{2}(0,L)}^{2}+\left(\lVert \mathbf{u}\rVert_{\infty}+\lVert \partial_{x} \mathbf{u}\rVert_{\infty}\right)^{2}+(|\mathbf{u}(t,0)^{2}|+|\mathbf{u}(t,L)^{2}|)\right),
 \end{split}
 \label{diffVa}
\end{equation}
where $I_{1}$ and $I_{2}$ denote the following quadratic forms
\begin{equation}
\begin{split}
 I_{1}(x,y)=&\left(\lambda_{1}f_{1}(L)e^{-\mu L}-\lambda_{2}f_{2}(L)e^{\mu L}k_{1}^{2}\right)x^{2}\\
 &+\textcolor{black}{\left(q\sqrt{\frac{H_{1}}{g}}k_{3}-\lambda_{2}f_{2}(L)e^{\mu L}k_{3}^{2}-\mu\min\limits_{x\in[0,L]}(\lambda_{1},\lambda_{2})q\right)}y^{2}\\
&+(2\lambda_{2}f_{2}(L)e^{\mu L}k_{3}k_{1}-\textcolor{black}{q\sqrt{\frac{H_{1}}{g}}}\left(k_{1}-1\right))xy,\\
 I_{2}(x,y)=&\left((-\lambda_{1}f_{1})_{x}+2f_{1}\gamma_{1}(t,x)-\partial_{t}f_{1}\right)e^{-\mu x}x^{2}+\left((\lambda_{2}f_{2})_{x}+2f_{2}\delta_{2}(t,x)-\partial_{t}f_{2}\right)e^{\mu x}y^{2}\\
 &+2\left(\gamma_{2}f_{1}e^{-\mu x}+\delta_{1}f_{2}e^{\mu x}\right)xy.
 \end{split}
 \label{I1I2}
\end{equation} 
We can perform similarly with $V_{b}$ and $V_{c}$, to do this observe that $\partial_{t}\mathbf{u}$ and $\partial_{tt}^{2}\mathbf{u}$ are respectively solutions of
\begin{gather}
\begin{split}
&\partial_{t}(\partial_{t}\mathbf{u})+A(\mathbf{u},x,t)\partial_{x}(\partial_{t}\mathbf{u})+(\partial_{\mathbf{u}}B(\mathbf{u},x,t))(\partial_{t}\mathbf{u})
+(\partial_{t}A(\mathbf{u},x,t)+\partial_{\mathbf{u}}A(\mathbf{u},x,t).\partial_{t}\mathbf{u})\partial_{x}\mathbf{u}\\
&+\partial_{t}B\left(\mathbf{u},x,t\right)=0
\end{split}
\label{V2}\\
\begin{split}
&\partial_{t}(\partial_{tt}^{2}\mathbf{u})+A(\mathbf{u},x,t)\partial_{x}(\partial_{tt}^{2}\mathbf{u})
+(\partial_{\mathbf{u}}A(\mathbf{u},x).\partial_{tt}^{2}\mathbf{u})\partial_{x}\mathbf{u}
+(\partial_{\mathbf{u}}B(\mathbf{u},x))(\partial_{tt}^{2}\mathbf{u})\textcolor{black}{,}\\
&+2\partial_{\mathbf{u}}(\partial_{t}A(\mathbf{u},x,t)).\partial_{t}\mathbf{u})\partial_{x}\mathbf{u}+(\partial_{tt}^{2}A(\mathbf{u},x,t)+2\partial_{\mathbf{u}}A(\mathbf{u},x).\partial_{t}\mathbf{u}\partial_{x}(\partial_{t}\mathbf{u})+\partial_{t}A(\mathbf{u},x,t)\partial_{x}(\partial_{t}\mathbf{u})\\
&+((\partial^{2}_{\mathbf{u}}A(\mathbf{u},x).\partial_{t}\mathbf{u}).\partial_{t}\mathbf{u})\partial_{x}\mathbf{u}+\partial_{tt}^{2}B(\mathbf{u},x)+\partial_{\mathbf{u}}(\partial_{t}B(\mathbf{u},x)).\partial_{t}\mathbf{u}
+(\partial^{2}_{\mathbf{u}}B(\mathbf{u},x).\partial_{t}\mathbf{u})(\partial_{t}\mathbf{u})=0\textcolor{black}{,}
\label{V3}
\end{split}
\end{gather} 
which are very similar to \eqref{sys1}, as they only differ by quadratic perturbations or terms involving a time derivative of $(H_{1},V_{1})$.
We get then
\begin{equation}
\begin{split}
&\dot V=\dot V_{a}+\dot V_{b}+\dot V_{c}\leq-\mu\min\limits_{x\in[0,L]}(\lambda_{1},\lambda_{2})V\\
&+\left[f_{1}\lambda_{1}k_{2}^{2}-\lambda_{2}f_{2}\right]\left(u_{2}^{2}(t,0)+(\partial_{t}u_{2}(t,0))^{2}+(\partial_{tt}^{2}u_{2}(t,0))^{2}\right)\\
&-I_{1}(u_{1}(t,L),Z)-I_{1}(\partial_{t}u_{1}(t,L),\dot Z)-I_{1}(\partial_{tt}^{2}u_{1}(t,L),\ddot Z)\\
&-\int_{0}^{L}I_{2}((E\mathbf{u})_{1},(E\mathbf{u})_{2})+I_{2}((E\partial_{t}\mathbf{u})_{1},(E\partial_{t}\mathbf{u})_{2})+I_{2}((E\partial_{tt}^{2}\mathbf{u})_{1},(E\partial_{tt}^{2}\mathbf{u})_{2})dx\\
&+C\left(\lVert \mathbf{u}\rVert_{\infty}+\lVert \partial_{x} \mathbf{u}\rVert_{\infty}\right)\left(\lVert\mathbf{u} \rVert_{L^{2}(0,L)}^{2}+\lVert\partial_{t}\mathbf{u} \rVert_{L^{2}(0,L)}^{2}
+\lVert\partial_{tt}^{2}\mathbf{u} \rVert_{L^{2}(0,L)}^{2}+\left(\lVert \mathbf{u}\rVert_{\infty}+\lVert \partial_{x} \mathbf{u}\rVert_{\infty}\right)^{\textcolor{black}{2}}\right.\\
&\left.+|u_{2}(t,0)^{2}|+(|u_{1}(t,L)|+|Z|)^{2}+|\partial_{t}u_{2}(t,0)^{2}|+(|\partial_{t}u_{1}(t,L)|+|\dot Z|)^{2}+|\partial_{tt}^{2}u_{2}(t,0)^{2}|\right.\\
&+\left.(|\partial_{tt}^{2}u_{1}(t,L)|+|\ddot Z|)^{2}\vphantom{\left(\lVert\rVert\right)^{2}}\right)\\
&+C\delta\left(|u_{2}(t,0)|^{2}+(|u_{1}(t,L)|+|Z|)^{2}+|\partial_{t}u_{2}(t,0)|^{2}+(|\partial_{t}u_{1}(t,L)|+|\dot Z|)^{2}\right)+C\delta V.
\end{split}
\label{dVtot}
\end{equation} 

The two last terms come from the successive differentiations of the boundary conditions \eqref{bound}, together with \eqref{boundHV}, or the terms in \eqref{V2}--\eqref{V3} involving a time derivative of $A$ or $B$.
One can see that three identical quadratic form appears in the integral in $((E\partial_{t}^{i}\mathbf{u})_{1},(E\partial_{t}^{i}\mathbf{u})_{2}),$ $i=0,1,2$, as well as three identical quadratic form at the boundaries in $(\partial_{t}^{i}u_{1}(t,L),\partial_{t}^{i}Z),$ $i=0,1,2$, and three identical terms proportional respectively to $(\partial_{t}^{i}u_{2}(t,0)),$ $i=0,1,2$.
Thus a sufficient condition to have $V$ strictly decreasing would be that the square terms and the forms that appear at the boundaries are negative-definite and the quadratic form in the integral is negative, i.e. the three following conditions:\\
\begin{enumerate}
 \item  \textbf{Condition at $0$}
 \begin{equation}
  \frac{\lambda_{2}f_{2}(0)}{\lambda_{1}f_{1}(0)}>k_{2}^{2}\textcolor{black}{.}
  \label{c1}
 \end{equation} 
\item \textbf{Condition at $L$}
 \begin{subequations}
\begin{align} \frac{\lambda_{1}f_{1}(L)}{\lambda_{2}f_{2}(L)}&>k_{1}^{2},
 \label{c2a}\\
\left(\lambda_{1}f_{1}(L)-\lambda_{2}f_{2}(L)k_{1}^{2}\right)\left(q\sqrt{\frac{H_{1}}{g}}-\lambda_{2}f_{2}(L)k_{3}\right)k_{3}&-\left(\lambda_{2}f_{2}(L)k_{3}k_{1}-\frac{1}{2}q\sqrt{\frac{H_{1}}{g}}\left(k_{1}-1\right)\right)^{2}>0\textcolor{black}{.}      \label{c2b}
      \end{align}
\label{c2}
\end{subequations}
      \item \textbf{Condition from the integral}
 \begin{subequations}
\begin{align}
      &\left((-\lambda_{1}f_{1})_{x}+2f_{1}\gamma_{1}(t,x)-\partial_{t}f_{1}\right)>0,
      \label{c31}\\
      \begin{split}
     &\left((-\lambda_{1}f_{1})_{x}+2f_{1}\gamma_{1}(t,x)-\partial_{t}f_{1}\right)\left((\lambda_{2}f_{2})_{x}+2f_{2}\delta_{2}(t,x)-\partial_{t}f_{2}\right)
          \\ & -\left(\gamma_{2}f_{1}+\delta_{1}f_{2}\right)^{2}>0,\text{  }\forall\text{  }(t,x)\in [0,T]\times(0,L).
           \label{c3b}
           \end{split}
           \end{align}
           \label{c3}
           \end{subequations} 
\end{enumerate}
Let assume for the moment that \eqref{c1}--\eqref{c3} are satisfied
for any $\delta\in(0,\delta_{3})$ where $\delta_{3}$ is a positive constant.
Then, as the inequalities \eqref{c1}--\eqref{c3} are strict, by continuity there exist $\mu>0$ such that the square terms and the quadratic forms \textcolor{black}{$I_{1}$} at the boundaries and the quadratic forms \textcolor{black}{$I_{2}$} in the integral are positive definite. And there exists $\nu_{3}\in(0,\nu_{2})$ 
and $\delta_{4}\in(0,\delta_{3})$
such that, for any $\nu\in(0,\nu_{3})$,
and any $\delta\in(0,\delta_{4})$,
\begin{equation}
 \dot V\leq -\mu\min\limits_{[0,L]}(\lambda_{1},\lambda_{2})V+C\delta V +C\left(\left(\lVert\mathbf{u}\rVert_{\infty}+\lVert\partial_{x}\mathbf{u}\rVert_{\infty}\right)^{2}\right)\textcolor{black}{,}
\end{equation} 
where $C$ is a positive constant depending only on the system. Note that here, the cubic boundary terms that appeared in \eqref{dVtot} have been compensated by the strictly negative quadratic boundary terms, taking $\nu$ sufficiently small and using \eqref{estimate}.
Choosing $\delta_{5}\in(0,\delta_{4})$ such that $\delta_{5}<\mu\min_{[0,L]}(\lambda_{1},\lambda_{2})/4C$, for any $\delta\in(0,\delta_{5})$ one has
\begin{equation}
  \dot V\leq -\frac{3}{4}\mu\min\limits_{[0,L]}(\lambda_{1},\lambda_{2})V+C\left(\left(\lVert\mathbf{u}\rVert_{\infty}+\lVert\partial_{x}\mathbf{u}\rVert_{\infty}\right)^{2}\right)\textcolor{black}{.}
\end{equation} 
Now, if we assume in addition that \eqref{cV1} hold, using \eqref{estimate}, and Sobolev inequality, there exists $\nu_{4}\in(0,\nu_{3}]$ such that, for any $\nu\in(0,\nu_{4})$,
\begin{equation}
C\left(\left(\lVert\mathbf{u}\rVert_{\infty}+\lVert\partial_{x}\mathbf{u}\rVert_{\infty}\right)^{2}\right)\leq \frac{\mu}{4}\min\limits_{[0,L]}(\lambda_{1},\lambda_{2})V,
\end{equation} 
thus, setting $\gamma=\mu\min_{[0,L]}(\lambda_{1},\lambda_{2})$,
\begin{equation}
\dot V\leq -\frac{\gamma}{2} V.
\end{equation} 
which shows the exponential decay of $V$ and
ends the proof of Theorem \ref{th1}.\\

In other words, all that remains to do is to find $f_{1}$, $f_{2}$ and $q$ such that \eqref{c1}--\eqref{c3} are satisfied and such that $V$ satisfies \eqref{cV1}.
In order to find such functions, we use a lemma below.

Let us first introduce the following function $\phi$ defined by
\begin{equation}
\begin{split}
&\phi_{1}(t,x)=\exp\left(\int_{0}^{x}\frac{\gamma_{1}}{\lambda_{1}}dx\right),\\
&\phi_{2}(t,x)=\exp\left(-\int_{0}^{x}\frac{\delta_{2}}{\lambda_{2}}dx\right),\\
&\phi(t,x)=\frac{\phi_{1}(t,x)}{\phi_{2}(t,x)},
\end{split}
\label{phi}
\end{equation}
where $\lambda_{1}$ and $\lambda_{2}$ are defined by
\begin{equation}
\lambda_{1}(t,x):=\Lambda_{1}(\mathbf{0},x,t)>0,\text{  }\lambda_{2}(t,x):=-\Lambda_{2}(\mathbf{0},x,t)>0.
\label{lambda}
\end{equation} 
Note that $\lambda_{2}$ is not the second eigenvalue (when $\mathbf{u} = 0$) but its opposite. We use this notation so that $\lambda_{2}>0$.

We can state the following lemma
\begin{lem}
There exists $\delta_{0}>0$ such that if $\lVert\partial_{t}H_{1}\rVert_{L^{\infty}((0,+\infty)\times(0,L)}\leq \delta_{0}$,
the function $\chi=\textcolor{black}{\lambda_{2}\phi}/\lambda_{1}$ is solution on $[0,L]$ to the following equation
\begin{equation}
\partial_{x}\chi
=\left|\frac{\phi\gamma_{2}}{\lambda_{1}}+\frac{\phi^{-1}\delta_{1}}{\lambda_{2}}\chi^{2}+\sqrt{\frac{g}{H_{1}}}\partial_{t} H_{1}\right|, \forall\text{  }x\in[0,L],\text{  }t\in[0,+\infty)\textcolor{black}{,}
\label{eqg}
\end{equation}
and for any $x\in[0,L]$ and any $t\in[0,+\infty)$,
\begin{equation}
\left(\frac{\phi\gamma_{2}}{\lambda_{1}}+\frac{\phi^{-1}\delta_{1}}{\lambda_{2}}\chi^{2}+\sqrt{\frac{g}{H_{1}}}\partial_{t} H_{1}\right)>0.
\label{positivity}
\end{equation}
\label{lem2}
\end{lem}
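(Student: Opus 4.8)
The plan is to read \eqref{eqg} as a scalar first-order ODE in the space variable $x$, with $t$ entering only as a parameter, and to verify \emph{directly} that the explicit candidate $\chi=\lambda_{2}\phi/\lambda_{1}$, with $\phi$ given by \eqref{phi}, solves it; the smallness hypothesis on $\partial_{t}H_{1}$ will be used only to guarantee the strict positivity \eqref{positivity}, which makes the absolute value in \eqref{eqg} inactive. First I would differentiate $\chi$ in $x$: since $\ln\chi=\ln\lambda_{2}-\ln\lambda_{1}+\ln\phi_{1}-\ln\phi_{2}$ and, by \eqref{phi}, $\partial_{x}\ln\phi_{1}=\gamma_{1}/\lambda_{1}$ and $\partial_{x}\ln\phi_{2}=-\delta_{2}/\lambda_{2}$, one gets $\partial_{x}\chi=\chi\left(\partial_{x}\lambda_{2}/\lambda_{2}-\partial_{x}\lambda_{1}/\lambda_{1}+\gamma_{1}/\lambda_{1}+\delta_{2}/\lambda_{2}\right)$. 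On the other hand, substituting $\phi=\chi\lambda_{1}/\lambda_{2}$ into the first two summands of the right-hand side of \eqref{eqg} collapses them to $\chi\left(\gamma_{2}/\lambda_{2}+\delta_{1}/\lambda_{1}\right)$, so that equating the two expressions reduces the claimed identity to an algebraic relation involving only $\lambda_{1},\lambda_{2},\gamma_{i},\delta_{i}$ and the forcing $\sqrt{g/H_{1}}\,\partial_{t}H_{1}$.

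The key step is then to evaluate the remaining bracket using the explicit formulas \eqref{gamma} and \eqref{lambda}. Writing $\lambda_{1}=V_{1}+\sqrt{gH_{1}}$ and $\lambda_{2}=\sqrt{gH_{1}}-V_{1}$ and forming the combinations $\gamma_{1}-\delta_{1}$, $\delta_{2}-\gamma_{2}$, $\partial_{x}\lambda_{1}$, $\partial_{x}\lambda_{2}$, the friction and slope contributions cancel and what survives is proportional to $\sqrt{g/H_{1}}\,(H_{1}\partial_{x}V_{1}+V_{1}\partial_{x}H_{1})=\sqrt{g/H_{1}}\,\partial_{x}(H_{1}V_{1})$. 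At this point I would invoke the continuity equation in \eqref{target}, namely $\partial_{x}(H_{1}V_{1})=-\partial_{t}H_{1}$, which is precisely what converts the surviving term into the $\partial_{t}H_{1}$ forcing of \eqref{eqg}. This also explains the otherwise puzzling presence of $\partial_{t}H_{1}$: in the steady regime $\partial_{t}H_{1}\equiv 0$ the bracket vanishes identically and $\chi=\lambda_{2}\phi/\lambda_{1}$ is an exact solution, recovering the time-independent construction of \cite{HS}, while in the slowly-varying regime it enters as a controlled perturbation of size $O(\|\partial_{t}H_{1}\|_{\infty})$.

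It then remains to establish \eqref{positivity}. I would evaluate the bracketed quantity at $\partial_{t}H_{1}=0$: by the same cancellation it reduces, up to the strictly positive factors coming from $\phi$ and $\lambda_{1}$, to a multiple of the friction term of the form $k\,V_{1}\,(2+V_{1}^{2}/(gH_{1}))\sqrt{g/H_{1}}$, hence it is nonnegative and, when friction is present and the flow is subcritical, bounded below on $[0,L]\times[0,\infty)$ by a positive constant depending only on $H_{\max}$ and $\alpha$ through \eqref{fluvial}. The forcing $\sqrt{g/H_{1}}\,\partial_{t}H_{1}$ is a perturbation of size $O(\|\partial_{t}H_{1}\|_{\infty})$, so that choosing $\delta_{0}$ small enough — depending on that lower bound, $H_{\max}$ and $\alpha$ — preserves the strict sign; the absolute value in \eqref{eqg} may then be dropped, which legitimises the differentiation carried out above.

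I expect \eqref{positivity}, and not the differentiation identity, to be the main obstacle. The differentiation is a finite, if lengthy, computation whose only delicate feature is the systematic cancellation of the source terms via the continuity equation. By contrast the sign of the bracket is not automatic: the steady part degenerates as the friction tends to $0$, so the positive lower bound deteriorates there, and the inequality must be controlled \emph{uniformly} in $t$ and $x$ simultaneously with the smallness of $\partial_{t}H_{1}$. Isolating the exact quantity that remains positive, and calibrating $\delta_{0}$ against its (possibly small) lower bound, is precisely where the hypothesis $\|\partial_{t}H_{1}\|_{L^{\infty}}\leq\delta_{0}$ is consumed and is the step I would treat with most care.
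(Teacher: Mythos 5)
Your proposal is correct and follows essentially the same route as the paper's own proof: both verify the explicit candidate $\chi=\lambda_{2}\phi/\lambda_{1}$ by direct differentiation in $x$ using \eqref{phi}, \eqref{gamma} and \eqref{lambda}, both invoke the continuity equation $\partial_{t}H_{1}=-\partial_{x}(H_{1}V_{1})$ from \eqref{target} to convert the surviving derivative terms into the $\partial_{t}H_{1}$ forcing, and both obtain \eqref{positivity} by noting that the residual friction term $kV_{1}\left(2+V_{1}^{2}/(gH_{1})\right)\sqrt{g/H_{1}}$ dominates the forcing once $\lVert\partial_{t}H_{1}\rVert_{L^{\infty}}$ is small. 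Your departures are only organizational (you cancel friction early by forming $\gamma_{1}-\delta_{1}$ and $\delta_{2}-\gamma_{2}$, while the paper evaluates both sides in full as \eqref{Le1} and \eqref{Le2} and compares), and your cautious wording ``proportional to'' is in fact apt: carried out exactly, the forcing appears with the positive prefactor $\phi/\lambda_{1}^{2}$, which the paper itself silently drops in passing from \eqref{Le1}--\eqref{Le2} to \eqref{Leq}, so neither argument is more nor less precise than the other on this point.
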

The proof is given in the Appendix.\\

To understand the link between Lemma \ref{lem2} and the three conditions \eqref{c1}--\eqref{c3}, observe that the condition \eqref{c3} give rise to a differential \textcolor{black}{inequation}, which, as it will appear later on, 
is linked to the differential equation solved by Lemma \ref{lem2}. Then \eqref{c1} and \eqref{c2} can be seen as boundary conditions/values of the solution of this differential \textcolor{black}{inequation}.\\

Let now assume that $\delta <\delta_{0}$, where $\delta_{0}$ is given by Lemma \ref{lem2}. From Lemma \ref{lem2}, we know that there exists a solution on $[0,L]$ to equation \eqref{eqg}, namely $\lambda_{2}\phi/\lambda_{1}$.
Therefore, as $[0,L]$ is a compact set, there exists $\varepsilon_{1}$ such that 
for any $\varepsilon\in[0,\varepsilon_{1})$ there exists a solution $\chi_{\varepsilon}(t,x)$ to the following system
\begin{equation}
\begin{split}
&\partial_{x}\chi_{\varepsilon}(t,x)=\left(\frac{\phi\gamma_{2}}{\lambda_{1}}+\frac{\delta_{1}}{\phi\lambda_{2}}(\chi_{\varepsilon})^{2}+\sqrt{\frac{g}{H_{1}}}\partial_{t} H_{1}\right)+\varepsilon\textcolor{black}{,}\\
&\chi_{\varepsilon}(0)=\frac{\lambda_{2}(t,0)}{\lambda_{1}(t,0)}+\varepsilon\textcolor{black}{,}
\end{split}
\label{eqf}
\end{equation}
and moreover $(t,x,\varepsilon)\mapsto \chi_{\varepsilon}(t,x)$ is of class $C^{0}$ and $\partial_{x}\chi_{\varepsilon}(t,x)$ as well. 
This is a classical result on ODE due to Peano (see e.g. \cite{Hartman}[Chap. 5, Th 3.1]). From \eqref{eqf},
$\partial_{t}\chi_{\varepsilon}$ satisfies the following equation
\begin{equation}
\partial_{x}\partial_{t}\chi_{\varepsilon}=2\frac{\delta_{1}}{\phi\lambda_{2}}\chi_{\varepsilon}\partial_{t}\chi_{\varepsilon}+\left(\frac{\phi\gamma_{2}}{\lambda_{1}}\right)_{t}+\left(\frac{\delta_{1}}{\phi\lambda_{2}}\right)_{t}\chi^{2}_{\varepsilon}
+\sqrt{\frac{g}{H_{1}}}\partial_{tt}^{2} H_{1}-\frac{1}{2}\sqrt{\frac{g}{H_{1}^{3}}}(\partial_{t}H_{1})^{2}\textcolor{black}{.}
\label{eqdtf}
\end{equation} 
We used here that, from Proposition \ref{propISS} and Remark \ref{rmkHpISS}, $(H_{1},V_{1})\in C^{0}([0,+\infty); H^{3}(0,L))$, and from \eqref{target}, $\partial_{t}\partial_{x}H_{1}=-\partial_{x}^{2}(HV)$ and 
$\partial_{t}\partial_{x}V_{1}=\partial_{x}\left(-V_{1}\partial_{x}V_{1}-g\partial_{x}H_{1}-(kV_{1}^{2}/H_{1}-gC)\right)$. Thus $\partial_{tt}^{2}H_{1}$ belongs to $C^{0}([0,T]; H^{1}(0,L))$, and $(\gamma_{1}, \gamma_{2},\delta_{1},\delta_{2})$ belong to $C^{1}([0,T]; H^{1}(0,L))$.
Using \eqref{eqdtf}, we have
\begin{equation}
\begin{split}
\partial_{t}\chi_{\varepsilon}(t,x)=&\partial_{t}\chi_{\varepsilon}(t,0)\exp\left(\int_{0}^{x}2\frac{\delta_{1}}{\phi\lambda_{2}}\chi_{\varepsilon}(t,y) dy\right)\\
&+\int_{0}^{x}\exp\left(\int_{y}^{x}2\frac{\delta_{1}}{\phi\lambda_{2}}\chi_{\varepsilon}(t,\omega) d\omega\right)\left(\left(\frac{\phi\gamma_{2}}{\lambda_{1}}\right)_{t}+\left(\frac{\delta_{1}}{\phi\lambda_{2}}\right)_{t}\chi^{2}_{\varepsilon}+\sqrt{\frac{g}{H_{1}}}\partial_{tt}^{2} H_{1}-\frac{1}{2}\sqrt{\frac{g}{H_{1}^{3}}}(\partial_{t}H_{1})^{2} \right)dy.
\end{split}
\label{difffeps}
\end{equation} 
Instead of seeing the function $\chi_{\varepsilon}$ as a solution of an ODE with a parameter $t$, one can see it as a solution of an ODE with parameters 
$\lambda_{1}$, $\lambda_{2}$, $\gamma_{2}$, $\delta_{1}$, $\partial_{t}H_{1}$ and $\varepsilon$ that we denote $g_{\varepsilon}(x,\lambda_{1},\lambda_{2},\gamma_{1},\delta_{1},\partial_{t}H_{1})$. From \cite{Hartman}[Theorem 2.1] $g_{\varepsilon}$ 
is continuous with these parameters and with $\varepsilon$. But from \eqref{Hinfty}, \eqref{fluvial}, and \eqref{boundHV}, all these parameters are bounded and therefore belong to a compact set when $t\in[0,+\infty)$.
Thus, 
\begin{equation}
\varepsilon\rightarrow g_{\varepsilon}(x,\lambda_{1}(t),\lambda_{2}(t),\gamma_{1}(t),\delta_{1}(t),\partial_{t}H_{1}(t))=\chi_{\varepsilon}(t,x)
\end{equation} 
is uniformly continuous in $\varepsilon$ for $(t,x)\in[0,\infty)\times[0,L]$. This, together with \eqref{difffeps} implies that there exists $C_{0}$ depending only on $L$, $H_{\max}$, $\alpha$, $\varepsilon$ and continuous with $\varepsilon\in[0,\varepsilon_{1})$ such that
\begin{equation}
\begin{split}
&\left|\int_{0}^{x}\exp\left(\int_{y}^{x}2\frac{\delta_{1}}{\phi\lambda_{2}}\chi_{\varepsilon}(t,\omega) d\omega\right)\partial_{t}(\partial_{y}(H_{1}V_{1}))dy\right|\\
&\leq C_{0} \max\left(\lVert\partial_{t}H_{1}\rVert_{C^{1}([0,+\infty);C^{0}([0,L]))},\lVert\partial_{t}V_{1}\rVert_{C^{1}([0,+\infty);C^{0}([0,L]))}\right)\textcolor{black}{,}
\end{split}
\end{equation} 

Similarly
there exists a constant $C_{1}>0$ depending only on $L$, $H_{\max}$ and $\alpha$ such that
\begin{equation}
\rVert \partial_{t}\phi_{1}\lVert_{L^{\infty}((0,+\infty)\times(0,L)}\leq C_{1} \max\left(\lVert\partial_{t}H_{1}\rVert_{C^{1}([0,+\infty);C^{0}([0,L]))},\lVert\partial_{t}V_{1}\rVert_{C^{1}([0,+\infty);C^{0}([0,L]))}\right)\textcolor{black}{,}
\label{dphi}
\end{equation} 
and similarly for $\phi_{2}$. This, together with the definition of $\lambda_{1}$ and $\lambda_{2}$ given by \eqref{lambda}, \eqref{difffeps}, and
using the continuity of $\varepsilon\rightarrow \chi_{\varepsilon}$ on $[0,\varepsilon_{1})$ (recall that this continuity is uniform with respect to $(t,x)\in[0,+\infty)\times[0,L]$),
we get that there exists $C>0$ depending only on $H_{\max}$, $\alpha$ and $\varepsilon$ and continuous with $\varepsilon$ on $[0,\varepsilon_{1})$ such that
\begin{equation}
|\partial_{t}\chi_{\varepsilon}(t,x)|\leq (|\partial_{t}\chi_{\varepsilon}(t,0)|
\textcolor{black}{+ \max\left(\lVert\partial_{t}H_{1}\rVert_{C^{1}([0,+\infty);C^{0}([0,L]))},\lVert\partial_{t}V_{1}\rVert_{C^{1}([0,+\infty);C^{0}([0,L]))}\right))C(\varepsilon)}.
\end{equation} 
But, from \eqref{eqf} $\partial_{t}\chi_{\varepsilon}(t,0)=\left(\lambda_{2}/\lambda_{1}\right)_{t}$, thus 
using \eqref{boundHV}
we obtain
\begin{equation}
|\partial_{t}\chi_{\varepsilon}(t,x)|\leq \delta C_{2}(\varepsilon)\textcolor{black}{,}
\label{borne_dtfeps}
\end{equation} 
where $C_{2}$ is again a constant that only depends on $\varepsilon$, $\alpha$ and $H_{\max}$ and is continuous with $\varepsilon$ on $[0,\varepsilon_{1})$. 
We can now restrict ourselves to $\varepsilon\in[0,\varepsilon_{1}/2]$ and then $C_{2}$ can be chosen independent of $\varepsilon$ by simply taking its maximum on $[0,\varepsilon_{1}/2]$.
Recall that from Lemma \ref{lem2} we have, $\chi_{0}=\phi\lambda_{2}/\lambda_{1}$, and
\begin{equation}
\left(\frac{\phi\gamma_{2}}{\lambda_{1}}+\frac{\delta_{1}}{\phi\lambda_{2}}\chi_{0}^{2}+\sqrt{\frac{g}{H_{1}}}\partial_{t} H_{1}\right)>0.
\end{equation} 
Recall that we still have not chosen the bound $\delta\in(0,\textcolor{black}{\delta_{0}})$ on $\lVert\partial_{t}H_{1}\rVert_{C^{1}([0,\infty);C^{0}([0,L]))}$ and $\lVert\partial_{t}V_{1}\rVert_{C^{1}([0,\infty);C^{0}([0,L]))}$ given in \eqref{boundHV}.
From the assumptions on $k_{p}$ and $k_{I}$, i.e. \eqref{cond}, and \eqref{k1k3}, \textcolor{black}{and recalling that $k_{1}=\partial_{1}\mathcal{D}_{2}(0,0,t)$ 
and $k_{3}=-\partial_{2}\mathcal{D}_{2}(0,0,t)$}, one has
\begin{equation}
k_{1}^{2}<\left(\frac{\lambda_{1}(L)}{\lambda_{2}(L)}\right)^{2},\text{    }\text{    }\text{    }k_{3}>0.
\label{condk1k3}
\end{equation}
Thus, using \eqref{lambda},
\begin{equation}
\eta_{1}:=\min\left(\left(\frac{1}{|k_{1}|}-\frac{\lambda_{2}(L)}{\lambda_{1}(L)}\right),1-\frac{\lambda_{2}(L)}{\lambda_{1}(L)}\right)>0.
\label{defeta1}
\end{equation} 
As $\varepsilon\rightarrow \chi_{\varepsilon}(t,x)$ is uniformly continuous with $\varepsilon$ for $(t,x)\in[0,\infty)\times[0,L]$,
there exists
$\varepsilon_{2}\in(0,\varepsilon_{1}/2)$ such that
for any $(t,x)\in[0,\infty)\times[0,L]$
\begin{equation}
|\chi_{\varepsilon_{2}}(t,x)-\chi_{0}(t,x)|\leq \phi(t,L)\eta_{1},
\label{fL}
\end{equation}
and
\begin{equation}
\left(\frac{\phi\gamma_{2}}{\lambda_{1}}+\frac{\delta_{1}}{\phi\lambda_{2}}\chi_{\varepsilon_{2}}^{2}+\sqrt{\frac{g}{H_{1}}}\partial_{t} H_{1}\right)>0.
\label{fpositive}
\end{equation}
Note that $\varepsilon_{2}$ depends \textcolor{black}{$a$ $priori$} on $\delta$ from \eqref{fpositive}. However, from Lemma \ref{lem2} 
we can in fact choose $\varepsilon_{2}$ independent of $\delta$ and depending only on an upper bound of $\delta$ (for instance $\delta_{0}$ given by Lemma \ref{lem2}). This is important as, in the following,
we will choose
a $\delta$ that may depends on $\varepsilon$.\\

We select $f_{1}$ and $f_{2}$ in the following way:
\begin{equation}
\begin{split}
&f_{1}(t,x)=\frac{\phi^{2}_{1}}{\lambda_{1}\chi_{\varepsilon_{2}}(t,x)}>0\textcolor{black}{,}\\
&f_{2}(t,x)=\phi_{2}^{2}\frac{\chi_{\varepsilon_{2}}(t,x)}{\lambda_{2}}>0,
\label{deff1f2}
\end{split}
\end{equation} 
and we can now check that the condition \eqref{c3} is verified for $\delta$ small enough as
\begin{equation}
(-\lambda_{1}f_{1})_{x}=-2\frac{(\phi_{1})_{x}\lambda_{1}f_{1}}{\phi_{1}}+\phi_{1}^{2}\frac{\partial_{x}\chi_{\varepsilon_{2}}(t,x)}{\chi_{\varepsilon_{2}}^{2}(t,x)}.
\end{equation} 
Thus from \eqref{phi}
\begin{equation}
-(\lambda_{1}f_{1})_{x}+2\gamma_{1}f_{1}
=\phi_{1}^{2}\frac{\partial_{x}\chi_{\varepsilon_{2}}}{\chi^{2}_{\varepsilon_{2}}}
\label{f1}
\end{equation} 
and similarly
\begin{equation}
\begin{split}
(\lambda_{2}f_{2})_{x}+2\delta_{2}f_{2}&=(\phi^{2}_{2}\chi_{\varepsilon_{2}}(t,x))_{x}-(\phi^{2}_{2})_{x}\chi_{\varepsilon_{2}}(t,x)\\
&=\phi_{2}^{2}\partial_{x}\chi_{\varepsilon_{2}}.
\end{split}
\label{f2}
\end{equation} 
Therefore, from \eqref{eqf}, \eqref{f1}, and \eqref{f2}, one has
\begin{equation}
\begin{split}
(-(\lambda_{1}f_{1})_{x}+2\gamma_{1}f_{1}-\partial_{t}f_{1})((\lambda_{2}f_{2})_{x}+2\delta_{2}f_{2}-\partial_{t}f_{2})
=&\left(\frac{\phi_{1}\phi_{2}}{\chi_{\varepsilon_{2}}}\right)^{2}\left(\left(\frac{\phi \gamma_{2}}{\lambda_{1}}+\frac{\delta_{1}}{\phi\lambda_{2}}\chi_{\varepsilon_{2}}^{2}+\sqrt{\frac{g}{H_{1}}}\partial_{t} H_{1}\right)
+\varepsilon_{2}\right)^{2}\\
&-\partial_{x}\chi_{\varepsilon_{2}}\left(\frac{\phi_{1}^{2}}{\chi^{2}_{\varepsilon_{2}}}\partial_{t}f_{2}
+\phi_{2}^{2}\partial_{t}f_{1}\right)
+(\partial_{t}f_{1})(\partial_{t}f_{2})\textcolor{black}{.}
\label{cond00}
\end{split}
\end{equation} 
But we have
\begin{equation}
\partial_{t}f_{1}=2\frac{(\partial_{t}\phi_{1})\phi_{1}}{\lambda_{1}\chi_{\varepsilon_{2}}}-(\frac{\partial_{t}\lambda_{1}}{\lambda_{1}^{2}\chi_{\varepsilon_{2}}}+\frac{\partial_{t}\chi_{\varepsilon_{2}}}{\lambda_{1}\chi^{2}_{\varepsilon_{2}}})\phi_{1}^2,
\end{equation} 
and besides, from \eqref{target} and \eqref{boundHV}, there exists $C_{3}>0$ depending only on $\alpha$ and $H_{\max}$, and an upper bound of $\delta$ (for instance $\textcolor{black}{\delta_{0}}$), such that
\begin{equation}
\max(\lVert H_{1x}\rVert_{L^{\infty}((0,+\infty)\times(0,L)},\lVert V_{1x}\rVert_{L^{\infty}((0,+\infty)\times(0,L)})\leq C_{3}.
\end{equation} 
Thus, using \eqref{gamma} and \eqref{lambda}, there exists $C_{4}>0$ depending only on $\alpha$ and $H_{\max}$, and $\delta_{0}$ (but not on $\delta$) such that
\begin{equation}
\max(\lVert\phi_{1}\rVert_{L^{\infty}((0,+\infty)\times(0,L)},\lVert\phi^{-1}\rVert_{L^{\infty}((0,+\infty)\times(0,L)})<C_{4}\textcolor{black}{,}
\label{boundphi}
\end{equation} 
and similarly for $\phi_{2}$. Observe now that, from $\chi_{0}=\lambda_{2}\phi/\lambda_{1}$ and \eqref{boundphi}, $|\chi_{0}|$ and $1/|\chi_{0}|$ can be bounded by a constant depending only on $\alpha$, $H_{\max}$, and 
$\delta_{0}$. Thus from 
\eqref{fL}
\begin{equation}
1/C_{5}\leq \lVert \chi_{\varepsilon_{2}}\rVert_{L^{\infty}((0,+\infty)\times(0,L)}\leq C_{5},
\label{boundf}
\end{equation}
where $C_{5}$ only depends on $\alpha$, $H_{\max}$ and $\delta_{0}$.
And therefore, from
\eqref{lambda}, \eqref{borne_dtfeps}, \eqref{dphi}, and \eqref{boundf} one has
\begin{equation}
|\partial_{t}f_{1}|\leq C_{6}\delta,
\end{equation} 
and similarly 
\begin{equation}
|\partial_{t}f_{2}|\leq C_{7}\delta,
\end{equation} 
where $C_{6}$ and $C_{7}$ are constants that only depend on $\alpha$, $H_{\max}$ (and $\delta_{0}$). 
We now select the bound on $\max\left(|\partial_{t}H_{1}|,|\partial_{t}V_{1}|\right)$: we select $\delta_{3}\in(0,\delta_{0})$ such that, for any $\delta\in[0,\delta_{3}]$ and any $(t,x)\in[0,\infty)\times[0,L]$,
\begin{equation}
C_{6}C_{5}^{2}C_{4}^{2}\delta<\varepsilon_{2}, 
\label{conddelta02}
\end{equation}
and
\begin{equation}
\begin{split}
&\varepsilon_{2}^{2}+2\varepsilon_{2}\inf\limits_{x\in[0,L], t\in[0,+\infty),\varepsilon\in(0,\varepsilon_{2})}\left(\frac{\phi \gamma_{2}}{\lambda_{1}}+\frac{\delta_{1}}{\phi\lambda_{2}}\chi_{\varepsilon}^{2}+\sqrt{\frac{g}{H_{1}}}\partial_{t} H_{1}\right)\\
&>\left(\frac{\phi \gamma_{2}}{\lambda_{1}}+\frac{\delta_{1}}{\phi\lambda_{2}}X^{2}+\sqrt{\frac{g}{H_{1}}}\delta+\varepsilon_{2}\right)\left(C_{6}\frac{\phi_{1}^{2}}{X^{2}}+C_{7}\phi_{2}^{2}\right)\left(\frac{X}{\phi_{1}\phi_{2}}\right)^{2}\delta\\
&+2\sqrt{\frac{g}{H_{1}}}\left(\frac{\phi\gamma_{2}}{\lambda_{1}}+\frac{\phi^{-1}\delta_{1}}{\lambda_{2}}X^{2}\right)\delta+\left(\frac{X}{\phi_{1}\phi_{2}}\right)^{2}C_{7}C_{6}\delta^{2},
 \end{split}
 \label{conddelta2}
 \end{equation} 
for any $x\in[0,L]$ and any $X\in[1/C_{5},C_{5}]$ (nota that having it for $X=C_{5}$ is enough). This is possible as $\varepsilon_{2}>0$ and, when $\delta_{3}=0$, 
\eqref{conddelta2} is verified and the inequality is strict. 
Then, from \eqref{phi}, \eqref{boundphi}, \eqref{cond00}, \eqref{boundf}--\eqref{conddelta2},
\begin{equation}
\begin{split}
(-(\lambda_{1}f_{1})_{x}+2\gamma_{1}f_{1}-\partial_{t}f_{1})((\lambda_{2}f_{2})_{x}+2\delta_{2}f_{2}-\partial_{t}f_{2})&>
\left(\frac{\phi_{1}\phi_{2}}{\chi_{\varepsilon_{2}}}\right)^{2}\left(\frac{\phi \gamma_{2}}{\lambda_{1}}+\frac{\delta_{1}}{\phi\lambda_{2}}\chi_{\varepsilon_{2}}^{2}\right)^{2}\\
&=\left(\frac{\gamma_{2}}{\lambda_{1}}f_{1}+\frac{\delta_{1}}{\lambda_{2}}f_{2}\right)^{2}\textcolor{black}{,}\\
\end{split}
\end{equation} 
which is exactly the second inequality of \eqref{c3}. Besides, 
from \eqref{positivity} and \eqref{conddelta02},
\begin{equation}
\begin{split}
(-(\lambda_{1}f_{1})_{x}+2\gamma_{1}f_{1}-\partial_{t}f_{1})=&\phi_{1}^{2}\frac{\partial_{x}\chi_{\varepsilon_{2}}}{\chi^{2}_{\varepsilon_{2}}}
-\partial_{t}f_{1}
\\
=&\frac{\phi_{1}^{2}}{\chi_{\varepsilon_{2}}^{2}}\left(
\left(\frac{\phi\gamma_{2}}{\lambda_{1}}+\frac{\delta_{1}}{\phi\lambda_{2}}f_{\textcolor{black}{\varepsilon_{2}}}^{2}+\sqrt{\frac{g}{H_{1}}}\partial_{t} H_{1}\right)+\textcolor{black}{\varepsilon_{2}}
-\frac{\partial_{t}f_{1}\chi_{\varepsilon_{2}}^{2}}{\phi_{1}^{2}}
\right)\\&>0.
\end{split}
\end{equation}

We can now check that \eqref{c1} and \eqref{c2} are also verified thanks to the choice of $\varepsilon_{2}$ and $\eta_{1}$. Indeed, using \eqref{eqf} and \eqref{k1k3}, one has
\begin{equation}
\frac{\lambda_{2}(0)f_{2}(t,0)}{\lambda_{1}(0)f_{1}(t,0)}=\chi_{\varepsilon_{2}}^{2}(t,0)=\left(\frac{\lambda_{2}(0)}{\lambda_{1}(0)}+\varepsilon_{2}\right)^{2}>\left(\frac{\lambda_{2}(0)}{\lambda_{1}(0)}\right)^{2}=k_{2}^{2}.
\end{equation} 
This explains our choice of initial condition for $\chi_{\varepsilon_{2}}$. 
Now, from \eqref{fL}, one has
\begin{equation}
\frac{\lambda_{1}(t,L)f_{1}(t,L)}{\lambda_{2}(t,L)f_{2}(t,L)}=\frac{\phi^{2}(t,L)}{\chi^{2}_{\varepsilon_{2}}(L)}>\frac{1}{\left(\frac{\lambda_{2}(t,L)}{\lambda_{1}(t,L)}+\eta_{1}\right)^{2}},
\end{equation} 
and from the definition of $\eta_{1}$ given by \eqref{defeta1},
\begin{equation}
\eta_{1}+\frac{\lambda_{2}(L)}{\lambda_{1}(L)}=\min\left(\frac{1}{|k_{1}|},1\right)\textcolor{black}{.}
\end{equation} 
Therefore, 
\begin{equation}
\frac{\lambda_{1}(t,L)f_{1}(t,L)}{\lambda_{2}(t,L)f_{2}(t,L)}>\max(k_{1}^{2},1),
\label{cond3}
\end{equation} 
and in particular
the condition \eqref{c2a} is verified. Let us now look at condition \eqref{c2b}. So far we have not selected the positive constant $q$. 
We want to show that there exists $q>0$ such that the condition \eqref{c2b} is satisfied. Observe that the left-hand side of \eqref{c2b} can be seen as a polynomial in $q$, 
and the condition \eqref{c2b} can be rewritten as
\begin{equation}
\begin{split}
P(q):=
&-\frac{q^{2}}{4}\frac{H_{1}}{g}\left(k_{1}-1\right)^{2}+q\sqrt{\frac{H_{1}}{g}}k_{3}\left(\lambda_{1}f_{1}(L)-\lambda_{2}f_{2}(L)(k_{1}^{2}-k_{1}(k_{1}-1))\right)
-\left(\lambda_{1}f_{1}(L)\right)\left(\lambda_{2}f_{2}(L)\right)k_{3}^{2}\\
=&-\frac{q^{2}}{4}\frac{H_{1}}{g}\left(k_{1}-1\right)^{2}+q\sqrt{\frac{H_{1}}{g}}k_{3}\left(\lambda_{1}f_{1}(L)-\lambda_{2}f_{2}(L)k_{1})\right)
-\left(\lambda_{1}f_{1}(L)\right)\left(\lambda_{2}f_{2}(L)\right)k_{3}^{2}>0\textcolor{black}{.}
\end{split}
\end{equation} 
From \eqref{cond3} $\lambda_{1}f_{1}(t,L)>\lambda_{2}f_{2}(t,L)k_{1}$ and from \eqref{condk1k3} $k_{3}>0$. Thus the real roots of $P$ are positive if they exist. This implies that there exists a positive constant $q$ such that \eqref{c2b} is satisfied if the discriminant of $P$ 
is positive. Denoting its discriminant by $\Delta$,
\begin{equation}
\Delta=\frac{H_{1}}{g}k_{3}^{2}\lambda_{2}^{2}f_{2}(t,L)^{2}\left[\left(\frac{\lambda_{1}f_{1}(L)}{\lambda_{2}f_{2}(L)}-k_{1}\right)^{2}-\left(\frac{\lambda_{1}f_{1}(L)}{\lambda_{2}f_{2}(L)}\right)\left(k_{1}-1\right)^{2}\right].
\end{equation} 
Let us introduce $h:X\rightarrow (X-k_{1})^{2}-X(k_{1}-1)^{2}$. The function $h$ is a second order polynomial with a positive dominant coefficient and observe that its roots are 
$k_{1}^{2}$ and $1$. Thus 
$h$ is increasing strictly on $[\max(k_{1}^{2},1),+\infty)$.
Hence, using \eqref{cond3},
\begin{equation}
\begin{split}
\Delta&=\frac{H_{1}}{g}k_{3}^{2}\lambda_{2}^{2}f_{2}(t,L)^{2}h(\frac{\lambda_{1}f_{1}(L)}{\lambda_{2}f_{2}(L)})\\
&>\frac{H_{1}}{g}k_{3}^{2}\lambda_{2}^{2}f_{2}(t,L)^{2}h(\max(k_{1}^{2},1))
=0.
\end{split}
\end{equation} 
This proves that there exists $q>0$ such that \eqref{c2b} is satisfied, and we select such $q$. 
All it remains to do now is to show that  the function $(\mathbf{U},z)\rightarrow V(t,\mathbf{U},z)$, which is now entirely selected, satisfies \eqref{cV1}.\\ 

From \eqref{fluvial} and \eqref{Hinfty} we know that for any $(t,x)\in[0,\infty)\times[0,L]$,
\begin{equation}
\sqrt{g H_{\max}}>\lambda_{2}>\alpha,\text{  }2\sqrt{gH_{\max}}>\lambda_{1}>\alpha.
\label{bornel1l2}
\end{equation} 
Besides, from the definition of $\phi_{1}$ and $\phi_{2}$ given by \eqref{phi}, \eqref{gamma} and the bound \eqref{fluvial}, \eqref{Hinfty}, there exists a constant $C_{8}$ that only depends on 
$\delta$, $\alpha$ and $H_{\max}$ such that
\begin{equation}
\frac{1}{C_{8}}\leq\lVert\phi_{1}\rVert_{\infty}\leq C_{8},\text{ }\frac{1}{C_{8}}\leq\lVert\phi_{2}\rVert_{\infty}\leq C_{8}.
\label{bornephi1phi2} 
\end{equation} 
Thus, using that $\chi_{0}=\lambda_{2}\phi/\lambda_{1}$, \eqref{deff1f2}, \eqref{fL}, \eqref{bornephi1phi2}, and \eqref{bornel1l2}, there exists $c_{1}>0$ constant independent of $\mathbf{U}$ and $z$ such that, for any $(\mathbf{U},z)\in H^{2}(0,L)\times \mathbb{R}$,
\begin{equation}
c_{1}\left(\lVert \mathbf{U}\rVert_{H^{2}(0,L)}+|z|\right) \leq V(t,(\mathbf{U},z))\leq \frac{1}{c_{1}}\left(\lVert \mathbf{U}\rVert_{H^{2}(0,L)}+|Z|\right)\forall\text{   }t\in[0,+\infty)\textcolor{black}{,}
\end{equation} 
which is exactly \eqref{cV1}. This concludes the proof of Theorem \ref{th1}.
\end{proof}
\section{Conclusion}
In this paper, we gave simple conditions on the design of a single PI controller to ensure 
the exponential stability of the nonlinear Saint-Venant equations with arbitrary friction and slope in the $H^{2}$ norm. 
These conditions apply when the inflow is an unknown constant, in that case the system has steady-states and any of them are stable. But they also apply when 
the inflow is time-dependent and slowly variable. In that case, no steady-states exists and one has to stabilize other target states. 
When the values of the target state are known at the end of the river, we have exponential stability of the target state. 
Otherwise, we have the Input-to-State stability with respect to the variation of the inflow disturbance. 
These sufficient conditions are found using a local quadratic entropy 
and, to the best of our knowledge, are less restrictive than any of the conditions that existed so far, even in the linear case.
In \cite{BastinCoronTamasoiu2015} it was shown that, in absence of friction and slope, these conditions were optimal for the linear case. However, so far there is no answer when there is some slope or friction and
whether these conditions are optimal or not would be a very interesting issue for a further study. \textcolor{black}{Its possible application to a network of channels would also be a matter of interest.
Finally, many stabilizing devices for finite dimensional systems also use a PID control with an additional derivative term. 
It has been shown in \cite{CoronTamasoiu2015} that this control cannot ensure exponential stability for a homogeneous hyperbolic equation. 
It would be an interesting question to know whether a filtering on the derivative term could enable to recover the stability for infinite dimensional system and whether this would enable a faster stabilization than the PI control.}
\section*{Acknowledgment}
The author would like to thank Jean-Michel Coron for his constant support and his advices. The author would like to thank Sebastien Boyaval for many fruitful discussions.
The author wishes also to thank Eric Demay, Peipei Shang, Shengquan Xiang and Christophe Zhang for fruitful discussions.
Finally the author would like to thank the ANR project Finite4SoS (ANR 15-CE23-0007) and the french Corps des IPEF.
\appendix

\section{Boundary conditions \eqref{bound} and \eqref{k1k3}}
\label{boundary}
In this appendix we justify the boundary conditions \eqref{bound} with \eqref{k1k3} after the change of variables. From the boundary conditions \eqref{bound1} in the physical coordinate $(h,v)$, together with the definition of $u_{1}$ and $u_{2}$
given in \eqref{change2}, one has at $x=L$
\begin{equation}
\begin{split}
u_{1}(t,L)=\mathcal{B}_{2}(h(t,L),Z(t),t)+\sqrt{\frac{g}{H_{1}}}h(t,L)=:\mathcal{F}_{1}(h(t,L),Z(t),x,t),\\
u_{2}(t,L)=\mathcal{B}_{2}(h(t,L),Z(t),t)-\sqrt{\frac{g}{H_{1}}}h(t,L)=:\mathcal{F}_{2}(h(t,L),Z(t),x,t).\\
\end{split}
\label{Bcond1}
\end{equation} 
From its definition, $\mathcal{F}_{1}$ is $C^{1}$ and, from \eqref{k1k30}, and \eqref{estimate}, there exists $\nu_{1}\in(0,\nu_{0})$ such that, for any $t\in[0,\infty)$, $\partial_{1}\mathcal{F}_{0}(0,Z(t),t)\neq 0$. Thus $\mathcal{F}_{1}$ is locally invertible with respect to its first variable, thus 
there exists $\nu_{2}\in(0,\textcolor{black}{\nu_{1}})$ such that
$h(t,L)=\mathcal{F}_{1}^{-1}(u_{1}(t,L),Z(t),t)$, where $\mathcal{F}_{1}^{-1}$ denotes the inverse with respect to the first variable. Besides, as $\mathcal{F}_{1}$ is of class $C^{2}$ with respect to the two first variables, $\mathcal{F}_{1}^{-1}$ is also of class $C^{2}$. Then, using \eqref{Bcond1}
\begin{equation}
u_{2}(t,L)=\mathcal{F}_{2}(\mathcal{F}_{1}^{-1}(u_{1}(t,L),Z(t),t),Z(t),t)=:\mathcal{D}_{2}(u_{1}(t,L),Z(t),t).
\end{equation} 
and, using \eqref{k1k30},
\begin{equation}
\begin{split}
 &\partial_{1}\mathcal{D}_{2}(0,0,t)=\partial_{1}\mathcal{F}_{2}(0,0,t)\partial_{1}(\mathcal{F}_{1}^{-1})(0,0,t)\\
 &=\frac{\partial_{1}\mathcal{F}_{2}(0,0,t)}{\partial_{1}\mathcal{F}_{1}(0,0,t)}=\frac{\partial_{1}\mathcal{B}_{2}(0,0,t)-\sqrt{\frac{g}{H_{1}}}}{\partial_{1}\mathcal{B}_{2}(0,0,t)+\sqrt{\frac{g}{H_{1}}}}\\
 &=-\frac{\lambda_{1}(L)-v_{G}(1+k_{p})}{\lambda_{2}(L)+v_{G}(1+k_{p})}.
 \end{split}
\end{equation} 
Now, as $\partial_{2}\mathcal{F}_{1}^{-1}(0,0,t)=-\partial_{2}\mathcal{F}_{1}(0,0,t)/\partial_{1}\mathcal{F}_{1}(0,0,t)$, using \eqref{k1k30},
\begin{equation}
\begin{split}
 &\partial_{2}\mathcal{D}_{2}(0,0,t)=\partial_{1}\mathcal{F}_{2}(0,0,t)\partial_{2}(\mathcal{F}_{1}^{-1})(0,0,t)+\partial_{2}\mathcal{F}_{2}(0,0,t)\\
 &=-\partial_{1}\mathcal{F}_{2}(0,0,t)\frac{\partial_{2}\mathcal{F}_{1}(0,0,t)}{\partial_{1}\mathcal{F}_{1}(0,0,t)}+\partial_{2}\mathcal{F}_{2}(0,0,t)\\
 &=\partial_{2}\mathcal{B}_{2}(0,0,t)\left(1-\frac{\partial_{1}\mathcal{B}_{2}(0,0,t)-\sqrt{\frac{g}{H_{1}}}}{\partial_{1}\mathcal{B}_{2}(0,0,t)+\sqrt{\frac{g}{H_{1}}}}\right)\\
 =&-\frac{v_{G}k_{I}}{H_{1}(t,L)}\left(\frac{2\sqrt{gH_{1}(t,L)}}{v_{G}(1+k_{p})+\lambda_{2}(t,L)}\right)\textcolor{black}{.}
\end{split}
\end{equation} 
The same can be done in $x=0$ in a slightly easier way, as $\mathcal{B}_{1}$ does not depends on $Z$. This gives \eqref{bound} and \eqref{k1k3}.

\section{Proof of Proposition \ref{ISS}}
\label{AppendixISS}
This appendix uses many computations that are very similar to the computations
in Section \ref{s2}, but in a simpler way. Thus, in order to avoid writing two times the same thing and to keep the proof relatively short, some steps might be quicker in this appendix.
Let $T_{1}>0$ and to be chosen later on. As $(H_{0}(0),V_{0}(0))$ satisfies \eqref{fluvial0}, there exists $\nu_{a}>0$ such that for $\nu\in(0,\nu_{a})$, $F((H_{1}^{0},V_{1}^{0})^{T})$ 
has two distinct nonzero eigenvalues. Recall that $F$ is given by \eqref{defF} and that that $\nu$ is the bound on $\lVert H_{1}^{0}-H_{0}(0),V_{1}^{0}-V_{0}(0)\rVert_{H^{2}(0,L)}$.
Besides, from \eqref{targetsteady2}, the function $(H_{0}(t,\cdot),V_{0}(t,\cdot))$ is the solution of a system of ODE with an initial condition depending on a parameter $t$. Thus, as $\partial_{t}Q_{0}\in C^{2}([0,+\infty))$ and the slope $C$ satisfies $C\in C^{2}([0,L])$,
using \eqref{targetsteady0} and \cite{Hartman}[Chap. 5, Theorem 3.1], $(H_{0},V_{0})\in C^{3}([0,T_{1}]; C^{2}([0,L]))$ and there exists a constant $C$ depending only on $H_{\max}$, $\alpha$ \textcolor{black}{and 
an upper bound of $\delta$,} such that, 
\textcolor{black}{\begin{equation}
\label{boundH0V0}
\lVert\partial_{t}^{i}H_{0},\partial_{t}^{i}V_{0}\rVert_{C^{2}([0,L])}\leq C\sum\limits_{n=1}^{i}\left|\partial_{t}^{n}Q_{0}\right|,\text{  }\forall\text{  }i\in[1,3],\text{ }\forall \text{ }t\in[0,T_{1}],
\end{equation} and in particular} 
\begin{equation}
\lVert\partial_{t} H_{0},\partial_{t} V_{0}\rVert_{C^{2}([0,T_{1}];C^{2}([0,L]))}\leq C \lVert\partial_{t}Q_{0}\rVert_{C^{\textcolor{black}{2}}([0,+\infty))}.
\label{H0V0bound}
\end{equation}
Thus \cite{Wang}[Theorem 2.1] can still be used 
\textcolor{black}{on $(H_{1}-H_{0})$}
and
there exist $\delta_{0}(T_{1})>0$ and $\nu_{0}(T_{1})\in(0,\nu_{a})$ such that, if $\nu\in(0,\nu_{0}(T_{1}))$ and $\delta\in(0,\delta_{0}(T_{1}))$, there exists a unique solution $(H_{1},V_{1})\in C^{0}([0,T_{1}];H^{2}(0,L))^{\textcolor{black}{2}}$ to the system \eqref{target}--\eqref{initialtarget}. 
Besides $(H_{1},V_{1})$ satifsfies an estimate as \eqref{estimate} but with $(H_{1},V_{1})$ instead of $(H,V)$ and $(H_{0},V_{0})$ instead of $(H_{1},V_{1})$. We denote by $C(T_{1})$ the associated constant.
Let us define $h_{1}:=H_{1}-H_{0}$ and $v_{1}:=V_{1}-V_{0}$.
We transform $(h_{1},v_{1})^{T}$ into $\mathbf{w}=(w_{1},w_{2})^{T}$ using the change of variables defined by 
\eqref{change1}--\eqref{change2} with $H_{0}$ and $V_{0}$ instead of $H_{1}$ and $V_{1}$. Thus we obtain
\begin{equation}
\begin{split}
\partial_{t}\mathbf{w}+A_{0}(\mathbf{w},x)\partial_{x}\mathbf{w}&+B_{0}(\mathbf{w},x)+S_{0}\begin{pmatrix}\partial_{t}H_{0}\\ \partial_{t}V_{0}\end{pmatrix}=0,\\
w_{1}(t,0)=&\mathcal{H}_{1}(w_{2}(t,0),Q_{0}(t)-Q_{0}(0)),\\
w_{2}(t,L)=&\mathcal{H}_{2}(w_{2}(t,L)),
\end{split}
\label{sysw}
\end{equation} 
where $A_{0}$, $B_{0}$ and $S_{0}$ have the same expression as $A$, $B$ and $S$ (given by\eqref{defA}, \eqref{defB}, \eqref{defS}) but with $(H_{0},V_{0})$ instead of $(H_{1}, V_{1})$. Similarly we define
\begin{equation}
\lambda_{1}^{0}=V_{0}+\sqrt{gH_{0}},\text{  }\lambda_{2}^{0}=\sqrt{gH_{0}}-V_{0}\textcolor{black}{,}
\end{equation} 
and $\phi^{0}$, defined as $\phi$ but with $(H_{0},V_{0})$ instead of $(H_{1}, V_{1})$.
Similarly as in Appendix \ref{boundary}, 
\begin{equation}
\mathcal{H}_{2}'(0)=-\lambda_{1}^{0}(L)/\lambda_{2}^{0}(L),\text{  }\mathcal{H}_{1}'(0)=-\lambda_{2}^{0}(0)/\lambda_{1}^{0}(0)\textcolor{black}{,}
\end{equation} 
which is of the form \eqref{bound} with $v_{G}=0$ and $Z=0$.
Before going any further, note that we can perform the same computations as in Section 2 with no problem,
as the proof in Section \ref{s2} only used Proposition \ref{propISS} to get that $(H_{1},V_{1})$ exists for any time and that \eqref{fluvial} and Lemma \ref{lem3} hold, but we will see now that such claims are true for $H_{0}$ and $V_{0}$. 
The existence of $(H_{0},V_{0})$ was already shown in section \ref{s1} and \eqref{fluvial0} is exactly \eqref{fluvial} with $(H_{0},V_{0})$ instead of $(H_{1},V_{1})$. Finally, \eqref{H0V0bound} is exactly the equivalent of Lemma \ref{lem3} for $(H_{0},V_{0})$.
We define now the Lyapunov fonction candidate $V:=V_{a}(\mathbf{w}(t,x),t)+V_{b}(\mathbf{w}(t,x),t)+V_{c}(\mathbf{w}(t,x),t)+V_{d}(\mathbf{w}(t,x),t)$ where $V_{a}$, $V_{b}$ and $V_{c}$ are defined in \eqref{Va}, \eqref{V2V3},
with $f_{1}$ and $f_{2}$ chosen as $f_{1}:=(\phi_{1}^{0})^{2}/(\lambda_{1}^{0}\eta)$ and $f_{2}:=(\phi_{2}^{0})^{2}\eta/(\lambda_{2}^{0})$, 
where $\eta$ is a function such that there exists a constant $\varepsilon>0$ independent of $\mathbf{w}$ such that
\begin{equation}
\begin{split}
 &\eta'=\left|\frac{\gamma_{2}^{0}}{\lambda_{1}^{0}}+\frac{\delta_{1}^{0}}{\lambda_{2}^{0}}\eta^{2}\right|+\varepsilon, \forall\text{  }x\in[0,L],\\
 &\eta(0)=\frac{\lambda_{2}^{0}(0)}{\lambda_{1}^{0}(0)}\phi^{0}(0)+\varepsilon.
 \end{split}
\end{equation} 
Note that $\eta$ exists as, for any $t\in[0,+\infty)$, $(\phi(t,\cdot)^{0}\lambda_{2}^{0}(t\cdot)/\lambda_{1}^{0}(t\cdot))$ is a solution of
\begin{equation}
\partial_{x} \chi=\left|\frac{\gamma_{2}^{0}}{\lambda_{1}^{0}}+\frac{\delta_{1}^{0}}{\lambda_{2}^{0}}\chi^{2}\right|, \forall\text{  }x\in[0,L],
\end{equation} 
this can be proved as in Lemma \ref{lem2}, and this case was actually shown in \cite{HS}.
Note that from \eqref{targetsteady0}, \eqref{targetsteady2} and \eqref{fluvial0}, $(H_{0})_{x}$ and $(V_{0})_{x}$ can be bounded by above and by below by constants that only depend on $H_{\max}$, $\alpha$ \textcolor{black}{and an upper bound of $Q_{0}$ (which can also be expressed only with $H_{\max}$, $\alpha$ from \eqref{fluvial0}).
}
Therefore, looking at their definition, the function $f_{1}$ and $f_{2}$ can also be bounded by above and below by constants that only depend on $H_{\max}$, $\alpha$ and $\varepsilon$. Thus there exist $c_{1}>0$ and $c_{2}>0$ depending only on $H_{\max}$ and $\alpha$, $\varepsilon$ and $\mu$ such that
\begin{equation}
c_{1}\lVert h_{1}(t,\cdot),v_{1}(t,\cdot)\rVert_{H^{2}(0,L)}^{2}\leq V(t)\leq c_{2}\lVert h_{1}(t,\cdot),v_{1}(t,\cdot)\rVert_{H^{2}(0,L)}^{2},\forall\text{  }t\in[0,T_{1}].
\label{equiv2}
\end{equation} 
Consequently, by differentiating $V$ exactly as in \eqref{diffVa0}--\eqref{dVtot}, and from \eqref{sysw}, we obtain that there exists $\mu>0$, $\nu_{1}\in(0,\nu_{0}(T_{1}))$ and $\textcolor{black}{\delta_{3}}>0$ such that, for any $\lVert h_{1}(0,\cdot),v_{1}(0,\cdot)\rVert_{H^{2}(0,L)}\leq \nu_{1}$,
and $\lVert\partial_{t}Q_{0}\rVert_{C^{2}([0,\infty))}\leq \delta$, \textcolor{black}{where $\delta\in(0,\delta_{3})$,} 
\begin{equation}
\begin{split}
\dot V\leq& -\mu V
+ \int_{0}^{L}2f_{1}w_{1}(S_{0}\begin{pmatrix}\partial_{t}H_{0}\\ \partial_{t}V_{0}\end{pmatrix})_{1}+2f_{2}w_{2}(S_{0}\begin{pmatrix}\partial_{t}H_{0}\\ \partial_{t}V_{0}\end{pmatrix})_{2}dx,\\
&+ \int_{0}^{L}2f_{1}\partial_{t}w_{1}(S_{0}\begin{pmatrix}\partial_{tt}^{2}H_{0}\\ \partial_{tt}^{2}V_{0}\end{pmatrix})_{1}+2f_{2}\partial_{t}w_{2}(S_{0}\begin{pmatrix}\partial_{tt}^{2}H_{0}\\ \partial_{tt}^{2}V_{0}\end{pmatrix})_{2}dx,\\
&+ \int_{0}^{L}2f_{1}\partial_{tt}^{2}w_{1}(S_{0}\begin{pmatrix}\partial_{ttt}^{3}H_{0}\\ \partial_{ttt}^{3}V_{0}\end{pmatrix})_{1}+2f_{2}\partial_{tt}^{2}w_{2}(S_{0}\begin{pmatrix}\partial_{ttt}^{3}H_{0}\\ \partial_{ttt}^{3}V_{0}\end{pmatrix})_{2}dx.
\end{split}
\label{additional}
\end{equation} 
Thus, using Cauchy-Schwarz inequality, \eqref{equiv2}, \textcolor{black}{and \eqref{boundH0V0}} there exists $C_{1}>0$ 
depending only on $H_{\max}$, $\alpha$ and an upper bound of $\mu$ such that
\textcolor{black}{
\begin{equation}
\dot V(t)\leq -\mu V(t)
+C_{1}\left(|\partial_{t}Q_{0}(t)|+|\partial_{tt}^{2}Q_{0}(t)|+|\partial_{ttt}^{3}Q_{0}(t)|\right)V^{1/2}(t),\text{ }\forall\text{ }t\in[0,T_{1}].
\label{H1V1decroissance}
\end{equation} 
and in particular
\begin{equation}
\textcolor{black}{\dot V(t)\leq -\mu V(t)
+C_{1}\lVert\partial_{t}Q_{0}\rVert_{C^{2}([0,\textcolor{black}{t]})}V^{1/2}(t),\text{ }\forall\text{ }t\in[0,T_{1}].}
\label{H1V1decroissance1}
 \end{equation} 
}
\textcolor{black}{ Let us define $V_{eq}:=(C_{1}\delta/\mu)^{2}$. From \eqref{H1V1decroissance1}, if $V(t)>2V_{eq}$, then there exists a constant $k>0$ such that $\dot V(t)<-k V^{1/2}(t)$.}
We now choose $\delta$ such that $\textcolor{black}{\sqrt{2}C_{1}\delta/(\mu \sqrt{c_{1}}})<\nu_{1}$. Thus,
from \eqref{H1V1decroissance1} and as $c_{1}$, $c_{2}$, $C_{1}$ and $\mu$ do not depend on $T_{1}$, we can choose $T_{1}$ large enough such that
\begin{equation}
 V(T_{1})\leq 2V_{eq}\leq c_{1}\nu_{1}^{2}\textcolor{black}{,}
\end{equation} 
which implies that 
\begin{equation}
\lVert h_{1}(T_{1},\cdot),v_{1}(T_{1},\cdot)\rVert_{C^{2}(0,L)}\leq \nu_{1}
\end{equation} 
and therefore there exists a unique solution $(h_{1},v_{1})\in C^{0}([T_{1},2T_{1}],H^{2}(0,L))$, with initial condition $(h_{1}(T_{1},\cdot),v_{1}(T_{1},\cdot))$ (we use the same existence Theorem (\cite{Wang}[Theorem 2.1])) and, 
\textcolor{black}{noting that $V(T_{1})\leq 2V_{eq}$ implies $V(2T_{1})\leq 2V_{eq}$,
this analysis still hold. }
We can do similarly for any $[nT_{1},(n+1)T_{1}]$ with $n\in\mathbb{N}$, thus, as $(H_{0},V_{0})\in C^{0}([0,+\infty),H^{2}(0,L))$, there exists a unique solution $(H_{1},V_{1})\in C^{0}([0,+\infty),H^{2}(0,L))$
and \eqref{H1V1decroissance} holds for any $t\in[0,+\infty)$. 
\textcolor{black}{
Therefore, denoting $g(t)=V(t)e^{\mu t}$,
we deduce from \eqref{H1V1decroissance} that 
\begin{equation}
g'(t)\leq C_{1}\left(|\partial_{t}Q_{0}(t)|+|\partial_{tt}^{2}Q_{0}(t)|+|\partial_{ttt}^{3}Q_{0}(t)|\right)e^{\frac{\mu t}{2}}\sqrt{g(t)}.
\label{ISS0}
\end{equation} 
Thus 
\begin{equation}
V^{1/2}(t)\leq V^{1/2}(0)e^{-\frac{\mu t}{2}}+\frac{C_{1}}{2}\left(\int_{0}^{t}\left(|\partial_{t}Q_{0}(t)|+|\partial_{tt}^{2}Q_{0}(t)|+|\partial_{ttt}^{3}Q_{0}(t)|\right)e^{\frac{\mu s}{2}}ds\right)e^{-\frac{\mu t}{2}}.
\end{equation} 
}
This implies the ISS property
\textcolor{black}{\begin{equation}
\begin{split}
\lVert h_{1}(t,\cdot),v_{1}(t,\cdot)\rVert_{H^{2}((0,L);\mathbb{R}^{2})}\leq& \sqrt{\frac{c_{2}}{c_{1}}}\lVert h_{1}(0,\cdot),v_{1}(0,\cdot)\rVert_{H^{2}((0,L);\mathbb{R}^{2})}e^{-\frac{\mu t}{2}}\\
&+\frac{C_{1}}{2\sqrt{c_{1}}}\left(\int_{0}^{t}\left(|\partial_{t}Q_{0}(t)|+|\partial_{tt}^{2}Q_{0}(t)|+|\partial_{ttt}^{3}Q_{0}(t)|\right)e^{\frac{\mu s}{2}}ds\right)e^{-\frac{\mu t}{2}}.
\end{split}
\label{eqISS}
\end{equation}}
 This ends the proof of Proposition \ref{propISS}. 
 To extend this proof to the $H^{p}$ norm for $p>2$, note that using the same argument
 \eqref{H0V0bound} holds with the $C^{p}([0,T_{1}];C^{3}([0,L]))$ norm in the left-hand side and the $C^{p}$ norm in the right-hand side. 
We can can define $V_{3},...,V_{p}$ on $H^{p}(0,L)\times\mathbb{R}\times\mathbb{R}_{+}$ as in \eqref{V2V3} such that $V_{k}(\mathbf{w}(t,x),t)=V_{a}(\partial_{t}^{k}\mathbf{w}(t,x),t)$, for any $k\in[3,p]$.
Then \eqref{equiv2} holds with $V:=V_{a}+V_{b}+V_{c}+V_{3}+...V_{p}$ and the $H^{p}$ norm, and the rest can done done identically.
\section{Proof of Theorem \ref{th2}}
\label{ISS2}
Theorem \ref{th2} result from the proof of Theorem \ref{th1}. Note that the boundary conditions \eqref{boundary2} can be written under the form \eqref{boundary1} with $(H_{0},V_{0})$ instead of $(H_{1},V_{1})$
where the only difference is that $Z$ satisfies now
\begin{equation}
\dot Z=H_{c}-H(t,L)+\frac{f(t)}{v_{G}k_{I}},
\label{Z2}
\end{equation} 
where $f(t)=H_{c}\partial_{t}V_{0}(t,L)$. 
The rest of the proof can be conducted as in Appendix \ref{AppendixISS} for $(H_{1},V_{1})$, 
with $a$ $priori$ two differences: $(H,V)$ satisfies the boundary conditions of the form \eqref{boundary1} and not of the form given in \eqref{target},
and $\dot Z$ satisfies \eqref{Z2} instead of \eqref{Z}. However,
note that in Appendix \ref{AppendixISS} the only assumption used on the boundary conditions of the transformed system is that they are of the form \eqref{bound1}, 
which is still the case here.
Thus,
the only difference with Appendix \ref{AppendixISS} are
some additional terms when $\dot Z$ is used, which is in the boundary terms in the derivative of the Lyapunov function. There exists therefore $\delta_{4}>0$ and $\nu_{2}>0$ such that\textcolor{black}{, for any $\lVert h_{1}(0,\cdot),v_{1}(0,\cdot)\rVert_{H^{2}(0,L)}\leq \nu_{2}$,
and $\lVert\partial_{t}Q_{0}\rVert_{C^{2}([0,\infty))}\leq \delta$, where $\delta\in(0,\delta_{4})$,}
\begin{equation}
\textcolor{black}{\dot V(t)\leq -\frac{\gamma}{2}V(t)
\textcolor{black}{+C_{1}|\partial_{t}Q_{0}(t)+\partial_{tt}^{2}Q_{0}(t)+\partial_{ttt}^{3}Q_{0}(t)|V^{1/2}}+2qZf(t)+2q\dot Zf'(t)+2q\ddot Zf''(t),}
\end{equation} 
where $C_{1}$ is a constant only depending on $H_{\max}$, $\alpha$, $\nu_{2}$ and $\delta_{4}$. Using Lemma \ref{lem3}, 
there exists a constant $C>0$ depending only on $H_{\max}$, $\alpha$, $\nu_{2}$ and $\delta_{4}$ such that
\begin{equation}
\dot V\leq -\frac{\gamma}{2}V+CV^{1/2}\textcolor{black}{|\partial_{t}Q_{0}(t)+\partial_{tt}^{2}Q_{0}(t)+\partial_{ttt}^{3}Q_{0}(t)|}.
\end{equation} 
The same argument as in Appendix \ref{AppendixISS}, \eqref{ISS0}--\eqref{eqISS}, implies directly the ISS property \eqref{ISSHV}.

\section{Proof of Lemma \ref{lem2}}
In this appendix we prove Lemma \ref{lem2}. The proof is very similar to the proof given in \cite{HS} in the special case where $(H_{1},V_{1})$ is a steady state. However, it happens that the proof actually does not need the relation $(H_{1}V_{1})_{x}=0$ which is no longer true when $(H_{1},V_{1})$ is not a steady-state. Let $\chi=(\lambda_{2}\phi/\lambda_{1})$, we have from \eqref{phi}:
\begin{equation}
\begin{split}
\partial_{x}\chi=&\frac{\phi}{\lambda_{1}^{2}}\left(\lambda_{1}\partial_{x}\lambda_{2}-\lambda_{2}\partial_{x}\lambda_{1}+\lambda_{2}\gamma_{1}+\lambda_{1}\delta_{2}\right)\\
=&\frac{\phi}{\lambda_{1}^{2}}\left((V_{1}+\sqrt{gH_{1}})(-V_{1x}+\frac{\sqrt{gH_{1}}}{2H_{1}}H_{1x})-(-V_{1}+\sqrt{gH_{1}})(V_{1x}+\frac{\sqrt{gH_{1}}}{2H_{1}}H_{1x})\right.\\
&\left.+(\sqrt{gH_{1}}-V_{1})\left(\frac{3}{4}\sqrt{\frac{g}{H_{1}}}H_{1x}+\frac{3}{4}V_{1x}+\frac{kV}{H_{1}}-\frac{kV_{1}^{2}}{2H_{1}^{2}}\sqrt{\frac{H_{1}}{g}}\right)\right.\\
&\left.+(V_{1}+\sqrt{gH_{1}})\left(-\frac{3}{4}\sqrt{\frac{g}{H_{1}}}H_{1x}+\frac{3}{4}V_{1x}+\frac{2kV}{H_{1}}+\frac{kV_{1}^{2}}{2H_{1}^{2}}\sqrt{\frac{H_{1}}{g}}\right)\right)\\
=&\frac{\phi}{\lambda_{1}^{2}}\left(\sqrt{gH_{1}}\left(-2V_{1x}+\frac{3}{2}V_{1x}+\frac{2kV}{H_{1}}\right)-V_{1}\left(\frac{3}{2}\sqrt{\frac{g}{H_{1}}}H_{1x}-\frac{kV_{1}^{2}}{H_{1}^{2}}\sqrt{\frac{H_{1}}{g}}-\sqrt{\frac{g}{H_{1}}}H_{1x}\right)\right)\\
=&\frac{\phi}{\lambda_{1}^{2}}\left(\frac{2kV}{H_{1}}\sqrt{gH_{1}}+\frac{kV_{1}^{2}}{H_{1}^{2}}\sqrt{\frac{H_{1}}{g}}V_{1}+\frac{1}{2}\sqrt{\frac{g}{H_{1}}}\partial_{t}H_{1}\right)\textcolor{black}{.}
\end{split}
\label{Le1}
\end{equation}
And on the other hand:
\begin{equation}
\begin{split}
\left(\frac{\phi\gamma_{2}}{\lambda_{1}}+\frac{\delta_{1}}{\lambda_{2}\phi}\chi^{2}\right)&=\frac{\phi}{\lambda_{1}^{2}}\left(\lambda_{1}\gamma_{2}+\lambda_{2}\delta_{1}\right)\\
&=\frac{\phi}{\lambda_{1}^{2}}\left(\frac{2kV}{H_{1}}\sqrt{gH_{1}}+\frac{kV_{1}^{2}}{H_{1}^{2}}\sqrt{\frac{H_{1}}{g}}V_{1}+V_{1}\sqrt{\frac{g}{H_{1}}}\frac{H_{1x}}{2}+V_{1x}\frac{\sqrt{gH_{1}}}{2}\right)\\
&=\frac{\phi}{\lambda_{1}^{2}}\left(\frac{2kV}{H_{1}}\sqrt{gH_{1}}+\frac{kV_{1}^{2}}{H_{1}^{2}}\sqrt{\frac{H_{1}}{g}}V_{1}-\frac{1}{2}\sqrt{\frac{g}{H_{1}}}\partial_{t}H_{1}\right).
\end{split}
\label{Le2}
\end{equation}
Thus from \eqref{Le1} and \eqref{Le2}
\begin{equation}
 \partial_{x}\chi=\left(\frac{\phi\gamma_{2}}{\lambda_{1}}+\frac{\delta_{1}}{\lambda_{2}\phi}\chi^{2}+\sqrt{\frac{g}{H_{1}}}\partial_{t}H_{1}\right).
 \label{Leq}
\end{equation} 
And there exists $\delta_{0}$ such that, if $\lVert \partial_{t}H_{1}\rVert_{L^{\infty}((0,+\infty)\times(0,L)}\delta_{0}$,
\begin{equation}
\frac{\phi}{\lambda_{1}^{2}}\left(\frac{2kV_{1}}{H_{1}}\sqrt{gH_{1}}+\frac{kV_{1}^{2}}{H_{1}^{2}}\sqrt{\frac{H_{1}}{g}}V_{1}+\sqrt{\frac{g}{H_{1}}}\partial_{t}H_{1}\right)>0,\text{ }\forall\text{ }x\in[0,L],\text{  }t\in[0,+\infty)\textcolor{black}{,}
\end{equation} 
and, from \eqref{Le1} and \eqref{Leq},
\begin{equation}
\partial_{x}\chi=\left|\frac{\phi\gamma_{2}}{\lambda_{1}}+\frac{\delta_{1}}{\lambda_{2}\phi}\chi^{2}+\sqrt{\frac{g}{H_{1}}}\partial_{t}H_{1}\right|,
\end{equation} 
this ends the proof of Lemma \ref{lem2}.

\section{Proof of Lemma \ref{lem3}}\label{app:prooflem3}
In this Appendix we show that Lemma \ref{lem3} is a consequence of Proposition \ref{propISS} and Remark \ref{rmkHpISS}.
\begin{proof}
Indeed using Proposition \ref{propISS} and Remark \ref{rmkHpISS} with $p=3$, we have
\begin{equation}
\begin{split}
\|H_{1}(t,\cdot)-H_{0}(t,\cdot)\|_{H^{3}(0,L)}+\|V_{1}(t,\cdot)-V_{0}(t,\cdot)\|_{H^{3}(0,L)}\leq& \left(\|H_{1}^{0}-H^{*}\|_{H^{3}(0,L)}+\|V_{1}^{0}-V^{*}\|_{H^{3}(0,L)}\right)e^{-\frac{\mu t}{2}}\\
&+c_{2} \frac{2}{\mu}(1-e^{-\frac{\mu t}{2}}) \|\partial_{t}Q_{0}\|_{C^{3}([0,+\infty))}.
\end{split}
\end{equation}
Note that we chose $H_{1}^{0} = H^{*}$ and $V_{1}^{0} = V^{*}$ which means that
\begin{equation}\label{estimlemma1}
\|H_{1}(t,\cdot)-H_{0}(t,\cdot)\|_{H^{3}(0,L)}+\|V_{1}(t,\cdot)-V_{0}(t,\cdot)\|_{H^{3}(0,L)}\leq c_{2} \frac{2}{\mu}\|\partial_{t}Q_{0}\|_{C^{3}([0,+\infty))}.
\end{equation}
\textcolor{black}{Note that $H_{1}-H_{1}^{0}$ is the solution of a quasilinear hyperbolic system and is small in $H^{3}$ norm provided that $\partial_{t}Q_{0}$ is small in $C^{3}$ norm. Therefore, there exists a constant $C$ depending only on the parameters of the system and the bound $\nu$ such that
\begin{equation}
\label{estimlemma2}
\begin{split}
\|H_{1}(t,\cdot)-H_{0}(t,\cdot)\|_{L^{2}(0,L)}+\|V_{1}(t,\cdot)-V_{0}(t,\cdot)\|_{L^{2}(0,L)}&\\
+\|\partial_{t}H_{1}(t,\cdot)-\partial_{t}H_{0}(t,\cdot)\|_{L^{2}(0,L)}+\|\partial_{t}V_{1}(t,\cdot)-\partial_{t}V_{0}(t,\cdot)\|_{L^{2}(0,L)}&\\
+\|\partial_{tx}^{2}H_{1}(t,\cdot)-\partial_{tx}^{2}H_{0}(t,\cdot)\|_{L^{2}(0,L)}+\|\partial_{tx}^{2}V_{1}(t,\cdot)-\partial_{tx}^{2}V_{0}(t,\cdot)\|_{L^{2}(0,L)}&\\
+\|\partial_{tt}^{2}H_{1}(t,\cdot)-\partial_{tt}^{2}H_{0}(t,\cdot)\|_{L^{2}(0,L)}+\|\partial_{tt}^{2}V_{1}(t,\cdot)-\partial_{tt}^{2}V_{0}(t,\cdot)\|_{L^{2}(0,L)}&\\
+\|\partial_{ttt}^{3}H_{1}(t,\cdot)-\partial_{ttt}^{3}H_{0}(t,\cdot)\|_{L^{2}(0,L)}+\|\partial_{ttt}^{3}V_{1}(t,\cdot)-\partial_{ttt}^{3}V_{0}(t,\cdot)\|_{L^{2}(0,L)}&\\
+\|\partial_{ttx}^{3}H_{1}(t,\cdot)-\partial_{ttx}^{3}H_{0}(t,\cdot)\|_{L^{2}(0,L)}+\|\partial_{ttx}^{3}V_{1}(t,\cdot)-\partial_{ttx}^{3}V_{0}(t,\cdot)\|_{L^{2}(0,L)}
&\\
\leq C\|H_{1}(t,\cdot)-H_{0}(t,\cdot)\|_{H^{3}(0,L)}+\|V_{1}(t,\cdot)-V_{0}(t,\cdot)\|_{H^{3}(0,L)}&
\end{split}
\end{equation}
}
In what follows, the value of $C$ might change between lines but it still denotes a constant that only depends on the parameters of the system and the bound $\nu$. Besides, from Sobolev inequality, for $f\in H^{1}([0,L])$
\begin{equation}\label{estimlemma3}
\|f\|_{C^{0}([0,L])} \leq C\left(\|f\|_{L^{2}([0,L])}+\|\partial_{x}f\|_{L^{2}([0,L])}\right).
\end{equation}
Combining \eqref{estimlemma1}, \eqref{estimlemma2} and \eqref{estimlemma3},
\begin{equation}
\begin{split}
&\max(\|H_{1}(t,\cdot)-H_{0}(t,\cdot)\|_{C^{0}([0,L])},\|V_{1}(t,\cdot)-V_{0}(t,\cdot)\|_{C^{0}([0,L])})\\
&+\max(\|\partial_{t}H_{1}(t,\cdot)-\partial_{t}H_{0}(t,\cdot)\|_{L^{\infty}([0,L])},\|\partial_{t}V_{1}(t,\cdot)-\partial_{t}V_{0}(t,\cdot)\|_{C^{0}([0,L])})\\
&+
\max(\|\partial_{tt}^{2}H_{1}(t,\cdot)-\partial_{tt}^{2}H_{0}(t,\cdot)\|_{L^{\infty}([0,L])},\|\partial_{tt}V_{1}(t,\cdot)-\partial_{tt}V_{0}(t,\cdot)\|_{C^{0}([0,L])})
\\
&\leq Cc_{2} \frac{2}{\mu} \|\partial_{t}Q_{0}\|_{C^{3}([0,+\infty))},
\end{split}
\end{equation}
Therefore, using the inverted triangular inequality and the fact that 
$$\max(\|H_{1}(t,\cdot)-H_{0}(t,\cdot)\|_{C^{0}([0,L])},\|V_{1}(t,\cdot)-V_{0}(t,\cdot)\|_{C^{0}([0,L])})>0,$$ 
we have
\begin{equation}
\begin{split}
&\max(\|\partial_{t}H_{1}(t,\cdot)\|_{L^{\infty}([0,L])},\|\partial_{t}V_{1}(t,\cdot)\|_{C^{0}([0,L])})+
\max(\|\partial_{tt}^{2}H_{1}(t,\cdot)\|_{L^{\infty}([0,L])},\|\partial_{tt}V_{1}(t,\cdot)-\|_{C^{0}([0,L])})
\\
&\leq \max(\|\partial_{t}H_{0}(t,\cdot)\|_{L^{\infty}([0,L])},\|\partial_{t}V_{0}(t,\cdot)\|_{C^{0}([0,L])})+
\max(\|\partial_{tt}^{2}H_{0}(t,\cdot)\|_{L^{\infty}([0,L])},\|\partial_{tt}V_{0}(t,\cdot)-\|_{C^{0}([0,L])})\\
&+ Cc_{2} \frac{2}{\mu} \|\partial_{t}Q_{0}\|_{C^{3}([0,+\infty))}.
\end{split}
\end{equation}
Recall that $(H_{0}(t,\cdot),V_{0}(t,\cdot))$ satisfies \eqref{boundH0V0},
This means that
\begin{equation}
\begin{split}
&\max(\|\partial_{t}H_{1}(t,\cdot)\|_{L^{\infty}([0,L])},\|\partial_{t}V_{1}(t,\cdot)\|_{C^{0}([0,L])})+
\max(\|\partial_{tt}^{2}H_{1}(t,\cdot)\|_{L^{\infty}([0,L])},\|\partial_{tt}V_{1}(t,\cdot)-\|_{C^{0}([0,L])})
\\
&\leq C(1+c_{2} \frac{2}{\mu}) \|\partial_{t}Q_{0}\|_{C^{3}([0,+\infty))},
\end{split}
\end{equation}
As this is true for any $t\in [0,+\infty)$ we have
\begin{equation}
\|\partial_{t}H_{1},\partial_{t}V_{1}\|_{C^{1}([0,+\infty),C^{0}([0,L]))}\leq (1+Cc_{2} \frac{2}{\mu}) \|\partial_{t}Q_{0}\|_{C^{3}([0,+\infty))}.
\end{equation}
This ends the proof of Lemma \ref{lem3}.
\end{proof}
\bibliographystyle{plain}
\bibliography{SV_PI}
\end{document}